\def\wt{\widetilde}
\def\R{\mathbb R}
\def\C{\mathbb C}
\def\Z{\mathbb Z}
\def\N{\mathbb N}
\def\A{\mathbb A}
\def\B{\mathbb B}
\def\x{\mathbf x}
\def\y{\mathbf y}
\def\n{\mathbf n}
\def\m{\mathbf m}
\def\k{\mathbf k}
\def\z{\mathbf z}
\def\D{\mathbf D}
\def\1{\mathbf 1}
\def\H{\mathfrak H}
\def\bxi{\boldsymbol \xi}
\def\1{\bold 1}
\def\eps{\varepsilon}
\def\le{\leqslant}
\def\ge{\geqslant}
\definecolor{darkred}{rgb}{0.9,0.1,0.1}
\theoremstyle{theorem}
\newtheorem{theorem}{Theorem}[section]
\newtheorem{proposition}[theorem]{Proposition}
\newtheorem{lemma}[theorem]{Lemma}
\newtheorem{remark}[theorem]{Remark}
\newtheorem{corollary}[theorem]{Corollary}
\numberwithin{equation}{section}
\theoremstyle{plain}
\newtoks\thehProclaim
\newtheorem*{Proclaim}{\the\thehProclaim}
\begin{document}
\thispagestyle{empty}

\centerline{\textbf{Homogenization of non-symmetric convolution type operators}}

\bigskip
\def\supind#1{${}^\mathrm{#1}$}
\centerline{\textbf{} A.~L.~Piatnitski\supind{1,2},
V.~A.~Sloushch\supind{3}, T.~A.~Suslina\supind{3}, E.~A.~Zhizhina\supind{1,2}}
\bigskip
\centerline{\supind{1} The Arctic University of Norway, campus Narvik,}
\centerline{P.O. Box 385,}
\centerline{Narvik 8505, Norway}

\bigskip\bigskip
\centerline{\supind{2} Higher School of Modern Mathematics MIPT,}
\centerline{Klimentovski per., 1,}
\centerline{Moscow, 115184, Russia}

\bigskip
\centerline{\supind{3} St. Petersburg State University,}
\centerline{Universitetskaya nab.,  7/9,}
\centerline{St. Petersburg, 199034, Russia}

\bigskip

\centerline{e-mail: elena.jijina@gmail.com}
\centerline{e-mail: apiatnitski@gmail.com}
\centerline{e-mail: v.slouzh@spbu.ru}
\centerline{e-mail: t.suslina@spbu.ru}

\bigskip
\bigskip
\centerline{Abstract}

\medskip
The paper studies homogenization problem for a bounded in $L_2(\mathbb R^d)$ convolution type operator
${\mathbb A}_\eps$, $\eps >0$, of the form
$$
({\mathbb A}_\eps u) (\x) = \eps^{-d-2} \int_{\R^d} a((\x-\y)/\eps) \mu(\x/\eps, \y/\eps) \left( u(\x) - u(\y) \right)\,d\y.
$$
It is assumed that $a(\x)$ is a non-negative function from  $L_1(\R^d)$,  and $\mu(\x,\y)$ is a periodic in $\x$ and $\y$
function such that  $0< \mu_- \leqslant \mu(\x,\y) \leqslant \mu_+< \infty$.
No symmetry assumption on $a(\cdot)$ and $\mu(\cdot)$ is imposed, so the operator ${\mathbb A}_\eps$
need not be self-adjoint.
Under the assumption that the moments $M_k = \int_{\R^d} |\x|^k a(\x)\,d\x$, $k=1,2,3$, are finite
we obtain, for small $\eps>0$, sharp in order approximation of the resolvent  $({\mathbb A}_\eps + I)^{-1}$
in the operator norm in $L_2(\mathbb R^d)$, the discrepancy being of order $O(\eps)$.
The approximation is given  by an operator of the form
$({\mathbb A}^0 + \eps^{-1} \langle \boldsymbol{\alpha},\nabla  \rangle + I)^{-1}$ multiplied
on the right by a periodic function  $q_0(\x/\eps)$; here
${\mathbb A}^0 = - \operatorname{div}g^0 \nabla$
is the effective operator, and $\boldsymbol{\alpha}$ is a constant vector.

\bigskip\bigskip
\noindent\textbf{Keywords}:
Convolution type operators, periodic homogenization, operator estimates of discrepancy,
effective operator.

\bigskip\bigskip

\noindent
The work of  A.~Piatnitski  and E.~Zhizhina was partially supported by the project ''Pure Mathematics in Norway'' \ and the UiT Aurora project MASCOT.

\noindent
The research of  V.~Sloushch  and T.~Suslina was supported by the RSF foundation, project \hbox{22-11-00092-P}.

\newpage

\section*{Introduction}

\noindent{\sl Studied problem and description of results}

In this work we consider periodic homogenization problem for non-symmetric zero order convolution type operators in
$L_2(\mathbb R^d)$ of the form
\begin{equation}
\label{Sus1_intro}
 \mathbb{A}_\eps u( \x)=\frac1{\eps^{d+2}}\int\limits_{\mathbb R^d} a\Big(\frac{\x-\y}{\eps}\Big)\mu\Big(\frac \x\eps,\frac \y\eps\Big)\big( u(\x)-u(\y)\big)\,d\y,\ \ \x\in\mathbb{R}^{d},\ \ u\in L_{2}(\mathbb{R}^{d}),
 \end{equation}
with a small parameter $\eps>0$.
We assume that $a(\cdot)\geqslant 0$, $a\in L_1(\mathbb R^d)$, and $\mu(\x,\y)$ is a $\mathbb Z^d$ periodic in each variable function which is bounded and uniformly positive.   Furthermore, we assume that $a(\cdot)$ has finite moments up to order 3:
$M_{k}(a) =\int_{\mathbb{R}^{d}}|\x|^{k}a(\x)\,d\x<+\infty$, $k=1,\,2,\,3$.


The interest to nonlocal operators of convolution type and to homogenization problems for such operators is motivated
in particular by various applications in the mathematical biology,  population dynamics, mechanics of porous media and chemistry of polymers, see the introduction to \cite{PSlSuZh} for the further details.
On the other hand, the theory of these operators raises many challenging mathematical questions that attract the attention of
mathematicians.
The qualitative and asymptotic properties of convolution type operators have been actively studied
in the recent time.

 Our goal is to  approximate, for small $\eps>0$,  the
resolvent of the original operator $\mathbb A_\eps$ by means of the resolvent of an operator constructed in terms of the effective characteristics of the problem.
It should be emphasized that in the case of non-self-adjoint operators, the effective characteristics include not only the effective matrix but also the effective drift and the kernel of the adjoint periodic operator.

We show that the resolvent $(\mathbb A_\eps+I)^{-1}$ is asymptotically close,
{\it in the operator norm in} $L_2(\mathbb R^d)$, to the
operator $(\mathbb{A}^{0}+ \eps^{-1} \langle \boldsymbol{\alpha}, \nabla \rangle +I)^{-1}[q_0\big(\frac \x\eps\big) ]$,
where $\mathbb{A}^{0}=\mathrm{div}g^0\nabla$ is the {\it effective elliptic diffusion operator} with constant coefficients, $\boldsymbol{\alpha}\in\mathbb R^d$ is the so-called {\it effective
drift}, the $\mathbb Z^d$-periodic function $q_0(\x)$ defines the {\it kernel of the adjoint periodic operator} $\mathbb A^*$ corresponding to $\eps=1$:
$$
\mathbb A^*q_0(\x):=\int_{\mathbb R^d}\big( a(\x-\y)\mu(\x,\y)q_0(\x)-
a(\y-\x)\mu(\y,\x) q_0(\y)\big)d\y=0,\quad q_0\in L_2([0,1)^d),
$$
 and
$[q_0\big(\frac\x\eps\big) ]$ stands for the operator of multiplication by the function $q_0\big(\frac \x\eps\big) $.
The kernel of the periodic operator $\mathbb A^*$ is one-dimensional, and, under the normalization condition
$\int_{[0,1)^d}q_0(\y)d\y=1$, the function $q_0$ is uniquely defined and positive.

Then we prove, and this is the main result of the work, that, for the difference between the resolvent of $\mathbb A_\eps$
and the constructed approximation,  the following sharp in order estimate in the operator norm in $L_2(\mathbb R^d)$
is fulfilled:
\begin{equation}\label{0.3a_intr}
\bigl\|(\mathbb{A}_{\varepsilon}+I)^{-1}-(\mathbb{A}^{0}+ \eps^{-1} \langle \boldsymbol{\alpha}, \nabla \rangle +I)^{-1} [q_0^\eps ] \bigr\|_{L_{2}(\mathbb{R}^{d})\to
L_{2}(\mathbb{R}^{d})}\leqslant C(a,\mu)\varepsilon,\ \ \varepsilon>0,
\end{equation}
with $q_0^\eps(\x)=q_0\big(\frac \x\eps\big)$.

Notice that, in contrast with the symmetric operators, in the non-symmetric case an approximation of the resolvent of $\mathbb A_\eps$
{\sl is not reduced to just taking the resolvent of the homogenized operator}, the approximation also includes a large first order term and multiplication by a rapidly oscillating periodic function. When homogenizing the corresponding Cauchy problem
$$
\partial_t u= - \mathbb A_\eps u \ \ \hbox{in }\mathbb R^d\times(0,T],\quad u(\x,0)=u_0\in L_2(\mathbb R^d),
$$
the large first order term $\eps^{-1}\langle \boldsymbol{\alpha}, \nabla \rangle$ in \eqref{0.3a_intr} is responsible for homogenization in a moving frame $(\x,t)\to \big(\x-\frac { \boldsymbol{\alpha}}\eps t,t\big)$.
This means in particular that neither the resolvent of $\mathbb A_\eps$ nor the corresponding semigroup converges in the usual sense
to the resolvent (semigroup) of the limit operator.

\medskip
\noindent{\sl Some existing homogenization results for non-symmetric differential operators}

Various homogenization problems for differential operators have been widely studied during last fifty years. We mention here
only some key monographs in the field, see  \cite{BaPa, BeLP, JKO}.

Parabolic problems for non-symmetric convection-diffusion type differential operators in media with a
periodic microstructure have been studied in \cite{Garn}, \cite{DoPi07}, \cite{AlOr} and some other works.
It was proved 
that, in a properly chosen {\it moving frame},  the corresponding semigroups converge strongly in $L_2(\mathbb R^d)$ to the limit semigroup corresponding to the effective diffusion operator with constant coefficients.
The papers  \cite{DoPi07} and \cite{AlOr} dealt with the diffusive scaling, while  \cite{Garn} investigated the scaling
$(\x,t)\to (\eps \x,\eps^p t)$ with $p\in (0,2)$.

%
%
%

 \medskip\noindent
 {\sl Estimates for the rate of convergence}

Obtaining estimates for the rate of convergence in various averaging procedures plays
a very important role in engineering and applied sciences on the one hand, and leads to challenging mathematical problems
on the other hand.
The first quantitative homogenization  results for periodic differential operators were obtained in the monograph \cite{BeLP}, see also \cite{BaPa}, \cite{OlShYo}  and the bibliography in these books. 

Over the next 25 years, a large number of works were devoted to this topic,  where an essential progress
was achieved in estimating the rate of convergence
for various homogenization problems, including those for elasticity system,
Stokes equation, nonlinear differential equations, etc.

However, the mentioned works provide estimates for the rate of convergence  in the strong topology,
these estimates are valid  only for special classes of the right-hand sides.

In the works \cite{BSu1}, \cite{BSu3}, \cite{BSu4}  M. Birman and T. Suslina introduced and developed an operator-theoretic approach to homogenization of periodic differential operators. With the help of this approach
the so-called operator estimates for the rate of convergence have been obtained for a wide class of periodic
homogenization problems.  Let us illustrate the main ideas of the approach using as an example the homogenization of an elliptic operator
$A_\varepsilon = - \operatorname{div} g(\mathbf{x} / \varepsilon) \nabla$ in $L_2(\mathbb{R}^d)$. Here the matrix function $g(\mathbf{x})$ is assumed to be bounded, positive definite, and
$\mathbb{Z}^d$-periodic. As was shown in \cite{BSu1},  the resolvent $(A_\varepsilon +I)^{-1}$ converges, as  $\varepsilon \to 0$, in the operator norm in  $L_2(\mathbb{R}^d)$ to the resolvent of the operator  $A^0$, where
$A^0=\mathrm{div}g_{\rm hom}\nabla$ is the effective elliptic operator with constant coefficients.
  Moreover, for the discrepancy  the following estimate holds:
\begin{equation}
\label{BSu1}
\| (A_\varepsilon +I)^{-1} - (A^0 +I)^{-1}\|_{L_2(\mathbb{R}^d) \to L_2(\mathbb{R}^d)} \leqslant C \varepsilon.
\end{equation}
In the homogenization theory the estimates of this type are called {\it operator estimates} for the discrepancy.

More precise approximation of order $\eps^2$ that takes into account the corrector was constructed in \cite{BSu3}.

The method of proving \eqref{BSu1} relies on the scaling transformation, the Floquet-Bloch theory and the analytic perturbation theory.

The unitary scaling transformation reduces estimate \eqref{BSu1} to the inequality
\begin{equation}
\label{BSu2}
\| (A + \eps^2 I)^{-1} - (A^0 + \eps^2 I)^{-1}\|_{L_2(\R^d) \to L_2(\R^d)} \leqslant C \eps^{-1};
\end{equation}
here $A=A_1=- \operatorname{div} g(\mathbf{\x}) \nabla$. The operator $A$ can be expanded, by means of the unitary
Gelfand transform, into a direct integral over the operator family $A(\boldsymbol{\xi})$ in $L_2(\Omega)$, where
$\Omega$ is the unit cell of the lattice $\mathbb Z^d$, the parameter $\boldsymbol{\xi}$ called quasi-momentum
takes on values in the cell of the dual lattice $\tilde\Omega=[-\pi,\pi)^d$, and
$A(\boldsymbol{\bxi})=(-i\nabla+\bxi)^*g(\x)(-i\nabla+\bxi)$ with periodic boundary conditions.
In order to justify \eqref{BSu2} it is sufficient to show that
\begin{equation*}
\| (A(\bxi) + \eps^2 I)^{-1} - (A^0(\bxi) + \eps^2 I)^{-1}\|_{L_2(\Omega) \to L_2(\Omega)} \leqslant C \eps^{-1},
\quad\hbox{for all } \bxi \in \widetilde{\Omega}.
\end{equation*}
The analysis of the family of operators  $A(\bxi)$ is a crucial part of the approach.
Since $\{A(\bxi)\}$ is an analytic  family of operators with compact resolvent, the methods of
analytic perturbation theory can be used.  It was shown that the resolvent $(A(\bxi) + \eps^2 I )^{-1}$
can be approximated in terms of the spectral characteristics of the operator at its spectral edge.  In particular,
the effective matrix coincides with the Hessian of the first eigenvalue $\lambda_1(\bxi)$ of $A(\bxi)$ at $\bxi=0$.
Therefore,  homogenization can be treated as a {\sl spectral threshold effect} at the spectral edge
of an elliptic operator.

 Another method of deriving operator estimates in periodic homogenization problems
 was introduced by V. Zhikov and S. Pastukhova in  \cite{Zh, ZhPas1}, see also the survey \cite{ZhPas3} and the bibliography there. This method is called ''shift method'', it is also applicable to quantitative  homogenization problems in locally periodic environments and in bounded domains.

 Recent years the problem of obtaining operator estimates for the discrepancy in various homogenization problems
 attracted the attention of many mathematicians. A number of deep results has been obtained
 in this area. We refer to   \cite[Section 0.2]{PSlSuZh} and \cite[Introduction]{Su_UMN2023},
where the detailed description of the state of arts in this field can be found.

 \medskip\noindent
 {\sl Homogenization of nonlocal convolution type operators}

Although averaging problems for differential operators have been studied for a quite long period of time, the first
homogenization results for zero order convolution type operators was obtained 
quite recently. It was proved in     \cite{PZh} that, under natural coerciveness and moment condition, the family of operators $\mathbb A_\eps$ in \eqref{Sus1_intro} with symmetric coefficients admits homogenization in a periodic medium. Moreover, the effective
operator is a second order elliptic differential operator with constant coefficients. Similar results in perforated domains
were obtained in \cite{BraPia21} by the variational methods. It is interesting to observe that  although
 the operator $\mathbb A_\eps$ is nonlocal and bounded for each $\eps>0$, the homogenized operator is local and
  unbounded.

  The case of non-autonomous convolution type parabolic equations whose coefficients oscillate periodically
 both in spatial and temporal variables, was addressed in \cite{PZh23}.

  Homogenization problems for non-symmetric zero order
 convolution type operators were considered in \cite{PiaZhi19}. As in the case of differential operators, the homogenization result for the corresponding parabolic equation holds in moving coordinates.

 A  number of periodic homogenization results for L\'evy type operators corresponding to stable-like
  Markov processes were obtained in the recent works \cite{KPZ19} and \cite{CCKZ21_2}.
  The paper  \cite{CCKZ21}  focuses on estimating the convergence rate for these operators in the strong topology.
  The approach used in  \cite{CCKZ21} relies on probabilistic arguments.
  Operator estimates for  this homogenization  problem are the subject of work \cite{PSSZ_le}.

\medskip
The present work deals with periodic homogenization of convolution type operators of the form \eqref{Sus1_intro}.
We obtain sharp in order operator estimates for the rate of convergence in $L_2(\mathbb R^d)$.  In the symmetric case
this problem was addressed in the authors' works \cite{PSlSuZh, PSlSuZh1, PSlSuZh2}.
We emphasize that in all the previous works on homogenization of periodic problems the operator-theoretic approach
applied only to self-adjoint operators, both in the case of differential and of non-local operators. It was not clear if this approach can be generalized to  non-self-adjoint problems.  In the present paper
we succeeded to adapt the operator-theoretic approach to the case of non-symmetric convolution type operators.
We strongly believe that the homogenization technique developed in this paper will also be applicable to other
 non-self-adjoint problems.

It should also be noted that before the appearance of the authors' work \cite{PSlSuZh}, operator estimates for non-local convolution type operators had not been studied.

\subsection{Methods}
The method of approximating the resolvent  $(\mathbb A_\eps+I)^{-1}$,  
as $\eps\to0$, relies on a modified version of the operator-theoretic approach developed in  \cite{PSlSuZh, PSlSuZh1, PSlSuZh2}, which is adapted to the case of nonlocal operators.

The first two steps of this approach, namely the unitary scaling transformation and decomposing $\A$ into a direct integral
over the operator family $\mathbb{A}(\bxi)$ by means of the Gelfand transform, remain unchanged.
Then the problem is reduced to studying the asymptotic behaviour of the resolvent   $(\mathbb{A}(\bxi)+\eps^{2}I)^{-1}$
for small $\eps>0$, this resolvent should be approximated with the precision $O(\eps^{-1})$.
The operators  $\mathbb{A}(\bxi)$ are defined in Section \ref{sec1.2} below, they are bounded operators in $L_2(\Omega)$
and depend on a parameter $\bxi \in \widetilde{\Omega}$.
However, in contrast with the differential operators, the family $\{\mathbb{A}(\bxi)\}$ is not analytic, and thus
the methods of analytic perturbation theory do not apply to this family. Instead, we use the finite smoothness of
$\mathbb{A}(\bxi)$ which is granted by the assumption that the coefficient $a(\x)$ has finite moments up to order 3.



We show that the spectrum of $\mathbb{A}(\bxi)$ is located in the half-plane  $\{\lambda\in\mathbb C\,:\,\operatorname{Re} \lambda \ge 0\}$.
Moreover, for $|\bxi| > \delta > 0$ it belongs to the half-plane $\operatorname{Re} \lambda \ge C(\delta) >0$.
Therefore, it is sufficient to determine the asymptotics of the resolvent    $(\mathbb{A}(\bxi)+\eps^{2}I)^{-1}$
for $\bxi$ from the set  $|\bxi| \le \delta_0$, where $\delta_0>0$ is a small enough constant.
Further, it is clear that only the spectral characteristics of the operator $\mathbb{A}(\bxi)$ near the point $\lambda_0=0$
are responsible for the non-negligible contribution to the studied asymptotics. Indeed, letting $F(\bxi)$  be
the Riesz projector of the operator $\mathbb{A}(\bxi)$ that corresponds to some neighbourhood of zero, we show
that  the operator $(\mathbb{A}(\bxi)+\eps^{2}I)^{-1}(I - F(\bxi))$ is uniformly in $\eps$ bounded. Consequently,
it is sufficient to approximate the operator  $(\mathbb{A}(\bxi)+\eps^{2}I)^{-1}F(\bxi)$.  Here, the crucial role
is played by the so-called threshold approximations -- the expansions of the operators $F(\bxi)$ and $\mathbb{A}(\bxi) F(\bxi)$ for small $\bxi$.


We consider $\mathbb{A}(\bxi)$ as a perturbation of the operator $\mathbb{A}(\mathbf{0})$. It turns out that
$\lambda_0 =0$ is a point of the discrete spectrum of $\mathbb{A}(\mathbf{0})$ and that the kernel
$\operatorname{Ker} \mathbb{A}(\mathbf{0})$ is one-dimensional and consists of constants.
The kernel of the adjoint operator $\operatorname{Ker} \mathbb{A}^*(\mathbf{0})$ is also one-dimensional
and consists of functions of the form $c\, q_0(\x)$, $c \in \C$, where $ q_0(\x)$ is such that
$0<q_-\leqslant q_0(\x) \leqslant q_+$. Then for  $|\bxi| \le \delta_0$ the spectrum of $\mathbb{A}(\bxi)$
in a small enough neighbourhood of $\lambda=0$ consists of a one simple eigenvalue $\lambda_1(\bxi)$.

We choose a smooth contour $\Gamma \subset \C$ that comprises the eigenvalue   $\lambda_1(\bxi)$
and is separated from the remaining part of the spectrum, and then integrate the resolvent  $(\A(\bxi) - \zeta I)^{-1}$
over this contour in order to calculate the asymptotics of the operators  $F(\bxi)$ and $\mathbb{A}(\bxi)F(\bxi)$, as $\bxi\to0$.
In the symmetric case this technique was used in \cite[Section 4.2]{PSlSuZh}.


\subsection{Structure of the paper.}
The paper consists of the Introdiction and six Secrions.

In \S1 we introduce the operator  ${\mathbb A}$, decompose this operator into a direct integral over the
 family of operators   ${\mathbb A}(\bxi)$ and study the spectral characteristics of the operator
 $\mathbb{A}(\mathbf{0})$  for small values of the spectral parameter. Also, we provide a lower bound
 for the quadratic form of the operator  $\operatorname{Re}[q_0]{\mathbb A}(\bxi)$.
 In \S2 the threshold approximations of the operators $F(\bxi)$ and $\mathbb{A}(\bxi) F(\bxi)$  are obtained
for small $|\bxi|$.

Section 3 focuses on the asymptotic behaviour of the resolvent $( {\mathbb A}(\bxi) + \eps^2 I)^{-1}$ for small $\eps$,
this asymptotics is then used for approximating the resolvent $( {\mathbb A} + \eps^2 I)^{-1}$.
Section 4 focuses on approximating  the resolvent $({\mathbb A}_\eps + I)^{-1}$ in the operator norm in the space $L_2(\R^d)$.
The desired asymptotics is deduced
form the results of Section 3 by means of a scaling transformation. This is the main result of the paper.
 Sections 5 and 6 contain auxiliary statements on the stability of an isolated eigenvalue of ${\mathbb A}(\bxi)$ (\S5)
 and on the properties of the auxiliary operator  $\mathbf G$ (\S 6).

\subsection{Notation}
The norm in a normed space $X$ is denoted by $\|\cdot\|_{X}$, or just $\|\cdot\|$, if it does not lead
to an ambiguity; for two normed spaces  $X$ and $Y$ the standard norm of a bounded linear operator
$T:X\to Y$ is denoted by $\|T\|_{X\to Y}$  or just  $\|T\|$. The notation  $\mathcal{L}\{F\}$ stands for
the span of a set of vectors $F\subset X$.
The space of bounded linear operators in a normed space $X$ is denoted by $\mathcal{B}(X)$.

Given complex separable Hilbert spaces $\H$ and $\H_*$ and a linear operator \hbox{$A: \H \to \H_*$},
the notation  $\operatorname{Dom} A$  stands for the domain of this operator and $\operatorname{Ker} A$ --
for its kernel.
For a closed linear operator $\A$ in a Hilbert space
$\mathfrak{H}$ the spectrum of  $\A$ is denoted by $\sigma(\A)$.

If $\mathcal O$ is a domain in  $\R^d$, then the notation $L_{p}({\mathcal O})$,
\hbox{$1 \le p \le \infty$}, is used for the standard $L_p$ spaces in $\mathcal O$.
The standard scalar product in the space $L_{2}({\mathcal O})$ is denoted
by $(\cdot,\cdot)_{L_{2}({\mathcal O})}$ or without the index.
For $f\in L_\infty({\mathcal O})$ the symbol  $[f]$ stands for the operator of multiplication
by the function $f(\x)$ in the space  $L_{2}({\mathcal O})$.
The notation $H^s({\mathcal O})$ is used for the Sobolev  spaces of order $s>0$ in the domain $\mathcal O$,
$\mathcal{S}(\R^{d})$ stands for the Schwartz class in $\R^{d}$.

The scalar product in $\R^d$ and $\mathbb{C}^{d}$ is denoted by $\langle\cdot,\cdot\rangle$.
In what follows we also use the notation $\mathbf{x} = (x_1,\dots, x_d)^{t} \in \R^d$, $i D_j = \partial_j = \partial / \partial x_j$, $j=1,\dots,d$; $\mathbf{D} = - i \nabla = (D_1,\dots,D_d)^t$.
The characteristic function of a set $E\subset\R^d$ is denoted by  $\mathbf{1}_{E}$,
the circle  $\{z \in \C: |z-z_0|< r\}$ is denoted by $B_r(z_0)$.\\[3mm]

{
\section{Convolution type  operators: \\ representation as a direct integral. Estimates.}

\subsection{Operator $\A(a,\mu)$}}
Given functions $a\in L_{1}(\R^d)$ and  $\mu\in L_{\infty}(\R^d\times \R^d)$,
we define in $L_{2}(\R^d)$  a nonlocal {\sl convolution type operator } $\A = \A(a,\mu)$  by
\begin{equation*}
\A(a,\mu) u(\x):=\intop_{\R^d}a(\x-\y)\mu(\x,\y)(u(\x)-u(\y)) \,d\y,\ \ \x\in\R^d.
\end{equation*}
The operator $\A$ can be represented as follows: $\A=p(\cdot;a,\mu)-\B(a,\mu)$, where
\begin{equation*}
\begin{gathered}
p(\x;a,\mu):=\intop_{\R^d}a(\x-\y)\mu(\x,\y) \,d\y,\ \ \x\in\R^d,\\
\B(a,\mu) u(\x):=\intop_{\R^d}a(\x-\y)\mu(\x,\y)u(\y)\,d\y,\ \ \x\in\R^d.
\end{gathered}
\end{equation*}
According to the Schur lemma, see for instance  \cite[Lemma 4.1]{PSlSuZh},
the operator \hbox{$\B(a,\mu):L_{2}(\R^d)\to L_{2}(\R^d)$} is bounded and its norm in $L_2(\mathbb R^d)$
satisfies the estimate  $\|\B(a,\mu)\|\le\|\mu\|_{L_\infty}\|a\|_{L_1}$.
By the definition of $p(\x;a,\mu)$ we have
$\|p(\cdot;a,\mu)\|_{L_\infty}\le\|\mu\|_{L_\infty}\|a\|_{L_1}$.
Therefore, the operator $\A(a,\mu):L_{2}(\R^d)\to L_{2}(\R^d)$ is bounded in $L_2(\mathbb R^d)$.

In the case $\mu=\mu_{0}\equiv 1$ denote
   $\A_{0}(a):=\A(a,\mu_{0})$;
   $p_{0}(\x;a):=p(\x;a,\mu_{0})$; $\B_{0}(a):=\B(a,\mu_{0})$. Clearly, $\B_{0}(a)$ is the operator of convolution
   with the function $a$, and the potential $p_{0}(\x;a) = \int_{\R^d} a(\y)\,d\y$ is a constant.

In what follows we impose the following conditions on the functions $a$ and  $\mu$:
\begin{gather}
\label{h1.1}
a\in L_{1}(\R^d),\ \ \operatorname{mes}\{\x\in\R^d:a(\x)\not=0\}>0,\ \ a(\x)\ge 0,\ \ \x\in\R^{d};
\\
\label{h1.2}
0<\mu_{-}\le\mu(\x,\y)\le\mu_{+}<+\infty,\ \ \x, \y\in\R^d;
\\
\label{h1.3}
\mu(\x+\m,\y+\n)=\mu(\x,\y),\ \ \x, \y\in\R^d,\ \ \m,\n\in\Z^d.
\end{gather}
The notation $M_{k}(a)$ stands for the moment of order $k$ of the function $a$:
\begin{equation*}
M_{k}(a):=\intop_{\R^d}| \x|^{k}a(\x)\, d\x,\ \ k \in \N.
\end{equation*}
Since $0 < \int_{\R^d} a(\x)\, d\x < \infty$, the finiteness of the moment  $M_k(a)$
implies the finiteness of the moments $M_1(a),\dots, M_{k-1}(a)$.
The main result of this work, Theorem  \ref{teor3.1}, holds under the assumption $M_3(a) < \infty$.

By conditions \eqref{h1.1}--\eqref{h1.3}, the potential $p(\x)=p(\x;a,\mu)$ is real, $\Z^{d}$-periodic and satisfies the
estimates
\begin{equation}
\label{h1.4} \mu_{-}\|a\|_{L_1(\R^d)} \le
p(\x)\le\mu_{+}\|a\|_{L_1(\R^d)},\ \ \x\in\R^d.
\end{equation}

\subsection{Representation of the operator $\A(a,\mu)$ as a direct integral}\label{sec1.2}
Since the function $\mu(\x,\y)$ is $\mathbb Z^d\times\mathbb Z^d$-periodic, both the operator of multiplication
by the potential  $p(\x;a,\mu)$ and the operator $\B(a,\mu)$, and thus the operator $\A(a,\mu)$, commute
with the operators  $S_{\n}$ of integer shifts. The latter operators are defined by
%
\begin{equation*}
S_{\n}u(\x) =u(\x+\n),\ \ \x\in\R^d,\ \ \n\in\Z^d.
\end{equation*}
This means that both $\A(a,\mu)$ and $\B(a,\mu)$ are periodic operators with the periodicity lattice $\Z^d$. Denote
by $\Omega:=[0,1)^{d}$ the cell of the lattice $\Z^d$ and by  $\widetilde\Omega:=[-\pi,\pi)^{d}$ the cell of the dial
lattice $(2\pi\mathbb{Z})^{d}$.

We recall the definition of the Gelfand transform, see, for instance,  \cite{Sk} and \cite[Ch.~2]{BSu1}.
First the Gelfand transform  $\mathcal{G}$ is defined for the functions from the Schwartz class ${\mathcal S}(\R^d)$
by the formula
\begin{equation*}
\mathcal{G}u(\bxi,\x):=(2\pi)^{-d/2}\sum_{\n\in\Z^d} u(\x+\n) e^{-i \langle \bxi, \x+\n\rangle},\
\ \bxi\in\widetilde\Omega,\ \ \x\in\Omega,\ \ u\in {\mathcal S}(\R^d).
\end{equation*}
%
Then  $\mathcal{G}$ is extended by continuity to a unitary mapping
 $$
 L_{2}(\R^d) \to \int_{\widetilde\Omega}\oplus L_{2}(\Omega)\, d\bxi=L_{2}(\widetilde\Omega\times\Omega).
 $$

Like any periodic operator, $\A(a,\mu)$ and $\B(a,\mu)$ admit a decomposition into a direct integral
by means of the Gelfand transform, that is these operators admit a partial diagonalization:
\begin{equation}
\label{h1.5} \A(a,\mu) = {\mathcal G}^* \Bigl(
\int_{\widetilde\Omega} \oplus  \A(\bxi;a,\mu) \,d\bxi\Bigr)
{\mathcal G},\quad \B(a,\mu) = {\mathcal G}^* \Bigl(
\int_{\widetilde\Omega} \oplus  \B(\bxi;a,\mu) \,d\bxi\Bigr)
{\mathcal G}.
\end{equation}
Here $\A(\bxi) = \A(\bxi; a,\mu)$ and $\B(\bxi) = \B(\bxi; a,\mu)$ are bounded linear operators in
$L_{2}(\Omega)$ defined by
\begin{align}
\label{A_xi}
\A(\bxi; a,\mu) u (\x) &= p(\x;a,\mu) u(\x) - \B(\bxi; a,\mu)u(\x), \ \ u\in L_{2}(\Omega),
\\
\label{B_xi}
\B(\bxi; a,\mu)u(\x) &= \intop_{\Omega}\widetilde a(\bxi,\x-\y)\mu(\x,\y)u(\y)\,d\y,\ \ u\in L_{2}(\Omega),
\end{align}
with
\begin{equation}
\label{a_tilde}
\widetilde{a}(\bxi,\z) :=\sum_{\n\in\Z^d}a(\z+\n)e^{-i \langle \bxi, \z+\n \rangle },\ \
\bxi\in\widetilde\Omega,\ \ \z\in\R^d,
\end{equation}
and
\begin{equation}\label{e1.9}
p(\x;a,\mu)=\intop_{\Omega}\widetilde a(\mathbf{0},\x-\y)\mu(\x,\y)\,d\y.
\end{equation}
To clarify the first relation in \eqref{h1.5}, we consider an arbitrary $u
\in L_2(\R^d)$ and denote $v= \A u$.  Then $\mathcal{G} v (\bxi,
\cdot) = \A(\bxi) \mathcal{G} u (\bxi, \cdot)$, $\bxi \in
\wt{\Omega}$. The second relation is understood in the same way.

The operator $\B(\bxi; a,\mu)$ is compact in $L_2(\Omega)$, see \cite[Corollary 4.2]{PSlSuZh};
by the Schur lemma the norm of this operator admits the estimate
\begin{equation*}
\|\B(\bxi; a,\mu)\|  \le \mu_{+}\|a\|_{L_1(\R^d)},\ \ \bxi\in\widetilde\Omega.
\end{equation*}
Let $\sigma([p])$ be the spectrum of the operator of multiplication by the potential $p(\x;a,\mu)$.
Then $\sigma([p])\subset[\mu_{-}\|a\|_{L_{1}},\mu_{+}\|a\|_{L_{1}}]$, and
$$
\A(\bxi; a,\mu)-\lambda I=([p]-\lambda)(I-([p]-\lambda)^{-1}\B(\bxi; a,\mu)),\ \ \lambda\in\mathbb{C}\setminus\sigma([p]),\ \ \bxi\in\widetilde\Omega.
$$
Clearly, the operator $I\!-\!([p]\!-\!\lambda)^{-1}\B(\bxi; a,\mu)$ is invertible, if
$\mathrm{dist}(\sigma([p]),\lambda)>\|\B(\bxi; a,\mu)\|$.
Therefore, by the analytic Fredholm theorem, see, for instance, \cite[Ch. XI, Corollary 8.4]{GS1}, the operator
 $I-([p]-\lambda)^{-1}\B(\bxi; a,\mu)$ is invertible for all  $\lambda\in\mathbb{C}\setminus\sigma([p])$
 except for an at most countable set of points $\{\lambda_{k}(\bxi)\}$ which can only accumulate to points of
 $\sigma([p])$;  moreover, in a small enough neighbourhood of any such  point $\lambda_{k}(\bxi)$
 the following expansion is valid:
$$
(I-([p]-\lambda)^{-1}\B(\bxi; a,\mu))^{-1}=\sum_{n=-q_{k}}^{\infty}(\lambda-\lambda_{k}(\bxi))^{n}A_{n}^{(k)}(\bxi),
$$
where $A_{-q_{k}}^{(k)}(\bxi),\dots,A_{-1}^{(k)}(\bxi)$ are
operators of finite range.
Therefore, in the set $\mathbb C\setminus \sigma([p])$ the spectrum of the operator $\A(\bxi; a,\mu)$
coincides with the set of points $\{\lambda_{k}(\bxi)\}$, and the Riesz projector of the operator $\A(\bxi; a,\mu)$
corresponding  to each of these points 
has a finite rank.
Furthermore,  the spectrum of the operator $\A^{*}(\bxi; a,\mu)$ in the complement of $\sigma([p])$ in $\mathbb C$
consists of the points $\{\overline{\lambda_{k}(\bxi)} \}$, and the Riesz projectors of $\A^{*}(\bxi;a,\mu)$ corresponding
to these points 
also have a finite rank.

\subsection{The spectrum of the operators $\A({\mathbf 0};a,\mu)$ and $\A^{*}({\mathbf 0};a,\mu)$
 in the vicinity of zero}
By  \eqref{A_xi}, \eqref{B_xi}, \eqref{e1.9} we have $\A({\mathbf 0};a,\mu)\mathbf{1}_{\Omega}=0$.
Consequently, the point $\lambda_{0}=0$
is an element of the spectrum $\sigma(\A({\mathbf 0};a,\mu))$ and thus of the spectrum
$\sigma(\A^{*}({\mathbf 0};a,\mu))$. According to \eqref{h1.1}, \eqref{h1.4} the point  $\lambda_{0}=0$ is at a positive distance from  $\sigma([p])$; therefore,
$\lambda_{0}=0$ is an isolated point of  $\sigma(\A({\mathbf 0};a,\mu))$ and
$\sigma(\A^{*}({\mathbf 0};a,\mu))$.  Denote by $P$ and $P^{*}$ the Riesz projectors of the operators
$\A({\mathbf 0}; a,\mu)$ and $\A^{*}({\mathbf 0}; a,\mu)$, respectively, that correspond to the point $\lambda_{0}=0$.

\begin{proposition}\label{YadroA}
Under conditions  \eqref{h1.1}--\eqref{h1.3} the following statements hold true:
\begin{enumerate}
\item[1)]  The Riesz projectors $P$ and $P^{*}$ have rank $1$\textup{;}

\item[2)] The kernels of $\A({\mathbf 0}; a,\mu)$ and $\A^{*}({\mathbf 0};a,\mu)$ are given by
\begin{equation*}\label{KerA}
\operatorname{Ker}\A({\mathbf 0}; a,\mu)=\mathcal{L}\{\mathbf{1}_{\Omega}\},\ \ \operatorname{Ker}\A^{*}({\mathbf 0};a,\mu)=\mathcal{L}\{q_{0}\},\ \ q_{0}\in L_{2}(\Omega);
\end{equation*}

\item[3)]  The function $q_{0}$ can be chosen in such a way that
\begin{equation}\label{d1.0.0}
0<q_{-}\le q_{0}(\x)\le q_{+}<+\infty,\ \ \x\in\Omega;\ \ \int_{\Omega}q_{0}(\x) \, d\x=1.
\end{equation}
\end{enumerate}
\end{proposition}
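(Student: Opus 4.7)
I would reduce the problem to a Perron--Frobenius statement for the bounded positive operator $T:=[p]^{-1}\B({\mathbf 0};a,\mu)$ on $L_{2}(\Omega)$. Since $[p]^{-1}$ is bounded by \eqref{h1.4}, and since \eqref{e1.9} gives $\int_\Omega K(\x,\y)\,d\y=1$ for the nonnegative kernel $K(\x,\y)=\widetilde a({\mathbf 0},\x-\y)\mu(\x,\y)/p(\x)$ of $T$, one has $T\mathbf{1}_\Omega=\mathbf{1}_\Omega$ and $\|T\|_{L_\infty(\Omega)\to L_\infty(\Omega)}=1$. The equivalences $\A({\mathbf 0};a,\mu)u=0\iff u=Tu$ and $\A^{*}({\mathbf 0};a,\mu)w=0\iff [p]^{-1}w\in\operatorname{Ker}(I-T^{*})$ reduce the task to identifying $\operatorname{Ker}(I-T)$, $\operatorname{Ker}(I-T^{*})$ and the algebraic multiplicity of the eigenvalue $1$ of $T$.

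The main technical step is to establish irreducibility. The set $A_0:=\{\z\in\Omega:\widetilde a({\mathbf 0},\z)>0\}$ has positive Lebesgue measure by \eqref{h1.1}, so by Steinhaus' theorem $A_0-A_0$ contains a ball $B_r(\mathbf{0})$ in the torus $\R^d/\Z^d$. Iterating, for some $k_0\in\N$ the iterated kernel $K^{(k_0)}(\x,\y)$ of $T^{k_0}$ is bounded above and uniformly bounded below by a positive constant on $\Omega\times\Omega$. Producing a uniform lower bound covering the whole torus is where I expect the main difficulty to lie; everything that follows is then essentially formal.

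Given such $k_0$, the operator $T^{k_0}$ is compact (compactness of $\B({\mathbf 0};a,\mu)$ is \cite[Corollary 4.2]{PSlSuZh}) and is an integral operator with strictly positive, bounded kernel. Jentzsch's theorem then shows that the spectral radius of $T^{k_0}$ is an algebraically simple eigenvalue with a strictly positive eigenvector. Since $T^{k_0}\mathbf{1}_\Omega=\mathbf{1}_\Omega$ and any $L_2$-eigenvalue $\lambda$ of $T^{k_0}$ satisfies $|\lambda|\le 1$ (bootstrap the eigenfunction to $L_\infty$ using one application of the bounded kernel $K^{(k_0)}$, then invoke $\|T^{k_0}\|_{L_\infty\to L_\infty}=1$), this spectral radius equals $1$. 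The identity $(T^{k_0}-I)^m=(T-I)^m(I+T+\cdots+T^{k_0-1})^m$ shows that the generalised eigenspace of $T$ at $1$ is contained in that of $T^{k_0}$, hence is one-dimensional, proving $\operatorname{Ker}(I-T)=\mathcal L\{\mathbf{1}_\Omega\}$ and ruling out Jordan chains. The same argument for $(T^{*})^{k_0}$ gives $\operatorname{Ker}(I-T^{*})=\mathcal L\{\phi\}$ with $\phi>0$ a.e.; bootstrapping $\phi=(T^{*})^{k_0}\phi$ through the two-sided bounds on $K^{(k_0)}$ produces $0<\phi_{-}\le\phi(\x)\le\phi_{+}$.

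Putting $q_0:=[p]^{-1}\phi$ and rescaling so that $\int_\Omega q_0\,d\x=1$, the function $q_0$ inherits two-sided bounds from $\phi$ and $p$, which yields assertions 2) and 3). Assertion 1) follows because the Riesz projectors $P$, $P^{*}$ project onto the corresponding generalised eigenspaces, which by the above coincide with the one-dimensional kernels. As an independent verification, if $\A({\mathbf 0};a,\mu)v=c\mathbf{1}_\Omega$ for some $v\in L_2(\Omega)$ and $c\in\C$, then using $q_0$ real and the normalisation $\int_\Omega q_0\,d\x=1$ one obtains $c=(c\mathbf{1}_\Omega,q_0)=(\A({\mathbf 0};a,\mu)v,q_0)=(v,\A^{*}({\mathbf 0};a,\mu)q_0)=0$, confirming directly the absence of Jordan chains for $P$; interchanging the roles of $\A$ and $\A^{*}$ and pairing against $\mathbf{1}_\Omega$ does the same for $P^{*}$.
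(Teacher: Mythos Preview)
Your strategy coincides with the paper's: both reduce to a Perron--Frobenius statement for the transition-type operator (your $T=[p]^{-1}\B(\mathbf 0;a,\mu)$ is exactly the paper's $\mathbf G^*$, where $\mathbf G=\B^*(\mathbf 0;a,\mu)[p^{-1}]$), and both hinge on some iterate having a strictly positive kernel --- the paper delegates this lower bound to \cite[Lemma~4.2]{PiaZhi19}, you sketch it via Steinhaus. The paper then works out the Perron--Frobenius conclusions by hand in a sequence of lemmas in Appendix~II, while you invoke Jentzsch; this is a legitimate and more economical packaging, and the paper itself remarks that assertions 2) and 3) were earlier obtained in \cite{PiaZhi19} via Krein--Rutman. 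Your final verification of the absence of Jordan chains is exactly the paper's argument for assertion~1). One small slip: the correct equivalence is $\A^*(\mathbf 0;a,\mu)w=0\iff pw\in\operatorname{Ker}(I-T^*)$ (not $p^{-1}w$); your later definition $q_0:=p^{-1}\phi$ with $\phi\in\operatorname{Ker}(I-T^*)$ is nonetheless correct.

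There is, however, a genuine gap. You assert that the iterated kernel $K^{(k_0)}$ is bounded \emph{above}, and use this both to bootstrap eigenfunctions into $L_\infty$ (for the spectral-radius argument) and to deduce $\phi\le\phi_+$. Under the sole hypothesis $a\in L_1(\R^d)$ from \eqref{h1.1}, the periodised kernel $\widetilde a(\mathbf 0,\cdot)$ is only in $L_1(\Omega)$, and convolution powers of an $L_1$ function need not ever become bounded. The spectral-radius issue is harmless --- Jentzsch already says that the spectral radius is the \emph{only} eigenvalue admitting a strictly positive eigenvector, and $T\mathbf 1_\Omega=\mathbf 1_\Omega$ forces it to equal $1$ --- but the upper bound $q_0\le q_+$ in assertion~3) genuinely requires additional work. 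The paper devotes the bulk of Appendix~II to precisely this point: it truncates $\widetilde a$ to bounded $\widetilde a_N$, obtains a \emph{uniform} bound $\|\psi_N\|_{L_\infty}\le C(\widetilde a,C_0)$ for the corresponding eigenfunctions by an absorption argument exploiting the absolute continuity of the integral (splitting $\Omega$ according to whether $\psi_N$ exceeds $\|\psi_N\|_{L_\infty}^{1/2}$), and passes to the limit $N\to\infty$. Your proposal does not supply any substitute for this step.
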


\begin{proof}
We define a compact operator $\mathbf{G}=\B^*({\mathbf 0}; a,\mu)[p^{-1}]$ and notice that the kernel  $\operatorname{Ker}\A({\mathbf 0};a,\mu)$ coincides with the kernel $\operatorname{Ker}(\mathbf{G}^{*}-I)$.
The kernel $\operatorname{Ker}\A^{*}({\mathbf 0};a,\mu)$ is isomorphic to the kernel
$\operatorname{Ker}(\mathbf{G}-I)$ in the following sense:
$$
u\in \operatorname{Ker}\A^{*}({\mathbf 0}; a,\mu) \Longleftrightarrow pu\in \operatorname{Ker}(\mathbf{G}-I).
$$
Below in the Appendix we check, see Theorem \ref{theorem1.5}, that
\begin{equation}\label{f000}
\operatorname{Ker}(\mathbf{G}^{*}-I)=\mathcal{L}\{\mathbf{1}_{\Omega}\},\ \
\operatorname{Ker}(\mathbf{G}-I)=\mathcal{L}\{\psi_{0}\};
\end{equation}
here the function $\psi_{0}\in L_{2}(\Omega)$ satisfies the following relations:
\begin{equation}\label{f100}
0<\psi_{-}\le\psi_{0}(\z)\le\psi_{+}<+\infty,\ \ \z\in\Omega;\ \ \int_{\Omega}\psi_{0}(\z)\,d\z=1.
\end{equation}
Now the statements 2) and 3)  immediately follow from  \eqref{f000} and \eqref{f100},
the function  $p(\z) q_{0}(\z)$ being proportional to $\psi_{0}(\z)$.

Our next goal is to show that the rank of the Riesz projector $P$ is equal to $1$. Since  $P$ has a finite rank,
$\operatorname{Ran}P$ coincides with the root subspace  $\cup_{k=1}^{\infty}\operatorname{Ker}\A^{k}({\mathbf 0}; a,\mu)$. Consequently, it is sufficient to check that
$\operatorname{Ker}\A^{2}({\mathbf 0}; a,\mu)=\operatorname{Ker}\A({\mathbf 0}; a,\mu)$.
Assume by contradiction that there exists $v\in\operatorname{Ker}\A^{2}({\mathbf 0}; a,\mu)\setminus\operatorname{Ker}\A({\mathbf 0}; a,\mu)$ i. e.  $\A({\mathbf 0}; a,\mu)v=\alpha\mathbf{1}_{\Omega}$, $\alpha\not=0$. Then $\mathbf{1}_{\Omega}\in\operatorname{Ran}\A({\mathbf 0}; a,\mu)$ and thus $\mathbf{1}_{\Omega}\perp q_{0}$. The latter relation contradicts to \eqref{d1.0.0}.
\end{proof}

Note that the statements 2) and 3) of Proposition \ref{YadroA} were previously obtained in the work \cite[Corollary 4.1]{PiaZhi19}. The proof of these statements given in  \cite{PiaZhi19} relies on the Krein-Rutman theorem.
\begin{corollary}\label{YadroAP}
For the projectors $P$ and $P^{*}$ the relations
$P=(\cdot,q_{0})\mathbf{1}_{\Omega}$ and $P^{*}=(\cdot,\mathbf{1}_{\Omega})q_{0}$ are valid.
\end{corollary}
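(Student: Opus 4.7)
The plan is to exploit the rank-one structure of $P$ together with the duality relation between $P$ and its adjoint $P^*$. By Proposition \ref{YadroA}, $P$ is a rank-one projector with $\operatorname{Ran} P = \mathcal{L}\{\mathbf{1}_\Omega\}$. Hence there is a bounded linear functional $\ell$ on $L_2(\Omega)$ such that $Pu = \ell(u)\, \mathbf{1}_\Omega$, and by the Riesz representation theorem $\ell(u) = (u,h)$ for some uniquely determined $h \in L_2(\Omega)$. The task reduces to identifying $h$.

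The key step is to show that $h$ is a scalar multiple of $q_0$. For this I would use the standard fact that for any Riesz projector associated with an isolated part of the spectrum, one has $\operatorname{Ker} P = (\operatorname{Ran} P^*)^{\perp}$. By Proposition \ref{YadroA} (and the fact that $P^*$ is also rank one with range $\operatorname{Ker} \A^*(\mathbf{0}; a,\mu) = \mathcal{L}\{q_0\}$ — since for a rank-one Riesz projector the range coincides with the kernel of the operator), we obtain $\operatorname{Ker} P = \{q_0\}^\perp$. Thus $(u,h) = 0$ iff $(u, q_0) = 0$, which forces $h = \lambda q_0$ for some $\lambda \in \C$.

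To fix the constant $\lambda$, I would use the idempotency of $P$ in the form $P\mathbf{1}_\Omega = \mathbf{1}_\Omega$ (valid since $\mathbf{1}_\Omega \in \operatorname{Ran} P$). This gives
\begin{equation*}
\mathbf{1}_\Omega = P \mathbf{1}_\Omega = \lambda (\mathbf{1}_\Omega, q_0)\, \mathbf{1}_\Omega = \lambda \int_\Omega q_0(\x)\, d\x \cdot \mathbf{1}_\Omega = \lambda\, \mathbf{1}_\Omega,
\end{equation*}
where I used that $q_0$ is real-valued and the normalization \eqref{d1.0.0}. Hence $\lambda = 1$ and $Pu = (u, q_0)\, \mathbf{1}_\Omega$.

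The formula for $P^*$ then follows directly by taking adjoints: for all $u,v \in L_2(\Omega)$,
\begin{equation*}
(u, P^* v) = (Pu, v) = (u, q_0)\, (\mathbf{1}_\Omega, v) = \bigl(u,\, \overline{(\mathbf{1}_\Omega, v)}\cdot q_0\bigr) = \bigl(u,\, (v, \mathbf{1}_\Omega)\, q_0\bigr),
\end{equation*}
so $P^* v = (v, \mathbf{1}_\Omega)\, q_0$. I do not expect any serious obstacle here; the only mild point to watch is the conjugation conventions in the inner product, which work out cleanly because $q_0$ and $\mathbf{1}_\Omega$ are real.
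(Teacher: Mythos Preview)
Your argument is correct. The paper states Corollary~\ref{YadroAP} without proof, treating it as an immediate consequence of Proposition~\ref{YadroA}; your write-up supplies precisely the standard details one would expect (rank-one form, identification of the functional via $\operatorname{Ker}P=(\operatorname{Ran}P^*)^\perp$, normalization via $(\mathbf{1}_\Omega,q_0)=1$, and taking adjoints).
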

%

\begin{remark}
From now on we assume that the function $q_{0}(\z)$ is extended periodically from $\Omega$ to the whole $\R^{d}$.
\end{remark}

\begin{remark}
Using \eqref{A_xi}--\eqref{a_tilde} with $\boldsymbol{\xi} = 0$ it is easy to show that
the relation  $\A^{*}({\mathbf 0}; a,\mu)q_{0}=0$ is equivalent to each of the following two
equalities:
\begin{equation}\label{10.02.1}
p(\x)q_{0}(\x)=\int_{\Omega}\wt a({\mathbf 0},\y-\x)\mu(\y,\x)q_{0}(\y)\,d\y,\ \ \x\in\Omega,
\end{equation}
\begin{equation}\label{07.02.2}
p(\x)q_{0}(\x)=\int_{\R^{d}}a(\y-\x)\mu(\y,\x)q_{0}(\y)\,d\y,\ \ \x\in\R^{d}.
\end{equation}
\end{remark}

\begin{remark}\label{remark1.5}
If $\mu\! =\! \mu_{0}\!\equiv \! 1$,  then
$p(\x; a,\mu_{0})\!\equiv\! \int_{\R^{d}}a(\z)\,d\z$,  and $q_{0}(\x) \! \equiv \! 1$.
\end{remark}

\subsection{Estimates of the quadratic form of operator $\operatorname{Re} [q_{0}]\A(a,\mu)$}

\begin{lemma}
\label{lemma1.1}
Let conditions \eqref{h1.1}--\eqref{h1.3} be fulfilled. Then
\begin{equation}
\label{e1.11}
\operatorname{Re}([q_{0}]\A(a,\mu)u,u) = \frac{1}{2}\intop_{\R^{d}}d\x\,q_{0}(\x)\intop_{\R^d}\,d\y\,
a(\x-\y)\mu(\x,\y)|u(\x)-u(\y)|^{2},\ \ u\in L_{2}(\R^{d}).
\end{equation}
\end{lemma}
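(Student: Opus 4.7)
The plan is to compute $2\operatorname{Re}([q_0]\A u,u) = ([q_0]\A u,u) + \overline{([q_0]\A u,u)}$ by expanding both terms using the definition of $\A(a,\mu)$, then symmetrizing the resulting iterated integrals in the variables $\x,\y$. Since $q_0$ is real and bounded (by Proposition \ref{YadroA}), $[q_0]$ is self-adjoint, so the conjugate of the sesquilinear form is obtained by just conjugating the integrand. Fubini is applicable because $u\in L_2(\R^d)$, $q_0\in L_\infty$, $\mu\in L_\infty$ and $a\in L_1(\R^d)$, which is the same setup as in the Schur lemma argument used earlier in the section.

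Concretely, first I would write
\begin{equation*}
2\operatorname{Re}([q_0]\A u,u) = \iint q_{0}(\x) a(\x-\y)\mu(\x,\y)\bigl[\overline{u(\x)}(u(\x)-u(\y))+u(\x)(\overline{u(\x)}-\overline{u(\y)})\bigr]\,d\y\,d\x,
\end{equation*}
and use the elementary identity
\begin{equation*}
2|u(\x)|^{2}-\overline{u(\x)}u(\y)-u(\x)\overline{u(\y)} = |u(\x)-u(\y)|^{2}+\bigl(|u(\x)|^{2}-|u(\y)|^{2}\bigr)
\end{equation*}
to split the right-hand side into the desired quadratic form \eqref{e1.11} (times $2$) plus the diagonal remainder
\begin{equation*}
I := \iint q_{0}(\x)\,a(\x-\y)\mu(\x,\y)\bigl(|u(\x)|^{2}-|u(\y)|^{2}\bigr)\,d\y\,d\x.
\end{equation*}

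The whole content of the lemma is then the claim $I=0$, and this is exactly where the specific choice of $q_0$ enters. In the first piece of $I$ the inner integral over $\y$ is $p(\x)$ by the definition \eqref{e1.9}, giving $\int p(\x)q_{0}(\x)|u(\x)|^{2}d\x$. In the second piece I would swap $\x\leftrightarrow\y$ to get $\int[\int a(\y-\x)\mu(\y,\x)q_{0}(\y)\,d\y]\,|u(\x)|^{2}d\x$, and then apply identity \eqref{07.02.2} (equivalent to $\A^{*}(\mathbf{0};a,\mu)q_{0}=0$) to rewrite the bracket again as $p(\x)q_{0}(\x)$. The two pieces cancel.

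I do not anticipate a real obstacle. The only conceptual point worth flagging is that the cancellation of $I$ is not algebraic but uses \eqref{07.02.2}, i.e. the fact that $q_0$ spans $\Ker \A^*(\mathbf{0};a,\mu)$; this is precisely why the weight $[q_0]$ — rather than any other positive periodic weight — turns $\A$ into a form reminiscent of the symmetric Dirichlet form associated with the kernel $a(\x-\y)\mu(\x,\y)$.
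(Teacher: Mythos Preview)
Your proposal is correct and follows essentially the same approach as the paper: expand the sesquilinear form, isolate a remainder term, and kill it using the kernel identity \eqref{07.02.2} for $q_0$. The paper organizes the computation slightly differently --- it writes $([q_0]\A u,u)=\text{(desired form)}+J[u]$ and then shows $J[u]=-\overline{([q_0]\A u,u)}$ --- but the substance, and in particular the decisive use of \eqref{07.02.2}, is identical to yours.
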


\begin{proof}
The quadratic form of the operator $[q_{0}]\A(a,\mu)$ reads
\begin{multline}\label{e1.14}
([q_{0}]\A(a,\mu)u,u)=\intop_{\R^{d}}d\x\,q_{0}(\x)\intop_{\R^d} \,d\y\,
a(\x-\y)\mu(\x,\y)\overline{u(\x)} \bigl( u(\x)
-u(\y) \bigr)
\\
=\intop_{\R^{d}}d\x\,q_{0}(\x)\int_{\R^d}d\y\, a(\x-\y)\mu(\x,\y) \bigl|u(\x)
-u(\y) \bigr|^{2}+J[u],
\ \ u\in L_{2}(\R^{d});
\end{multline}
here the functional $J[u]$ is defined by
\begin{equation}
\label{e1.15}
J [u]:=\intop_{\R^{d}}d\x\,q_{0}(\x)\intop_{\R^d}d\y\, a(\x-\y)\mu(\x,\y)\overline{u(\y)}
\bigl(u(\x)-u(\y) \bigr),\ \ u\in L_{2}(\R^{d}).
\end{equation}
Representing the right-hand side of  (\ref{e1.15}) as a sum of two terms and using \eqref{07.02.2} we obtain
\begin{multline}\label{e1.16}
J[u]=-\intop_{\R^{d}}d\x\, q_{0}(\x) \intop_{\R^d}d\y\, a(\x-\y)\mu(\x,\y)|u(\y)|^{2}+\intop_{\R^{d}}d\x\,q_{0}(\x)\intop_{\R^{d}}d\y\, a(\x-\y)\mu(\x,\y)u(\x)\overline{u(\y)}
\\
=-\intop_{\R^{d}}d\y\,|u(\y)|^{2}\intop_{\R^{d}}d\x\, q_{0}(\x)a(\x-\y)\mu(\x,\y)+\overline{\intop_{\R^{d}}d\x\,q_{0}(\x)\intop_{\R^{d}}d\y\, a(\x-\y)\mu(\x,\y)\overline{u(\x)}u(\y)}
\\
=-\overline{([q_{0}]\A(a,\mu)u,u)},\ \ u\in L_{2}(\R^{d}).
\end{multline}
The desired relation  (\ref{e1.11})  follows from (\ref{e1.14}) and (\ref{e1.16}).
\end{proof}

\begin{corollary}\label{corollary1.7}
The operator $[q_{0}^{1/2}]\A(a,\mu) [q_{0}^{-1/2}]$ is accretive, the spectrum $\sigma(\A(a,\mu))$ is
located in the right half-plane. Moreover, the following estimate holds:
\begin{equation}\label{10.02.2}
\|(\A(a,\mu)-zI)^{-1}\|\le q_{+}^{1/2}q_{-}^{-1/2}|\operatorname{Re}z|^{-1},\ \ \operatorname{Re}z<0.
\end{equation}
\end{corollary}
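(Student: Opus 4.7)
The plan is to deduce everything from the similarity $\A(a,\mu) = [q_0^{-1/2}]\,T\,[q_0^{1/2}]$, where $T := [q_0^{1/2}]\A(a,\mu)[q_0^{-1/2}]$, together with the positivity statement of Lemma \ref{lemma1.1}.

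First I would verify accretivity of $T$. For $v\in L_2(\R^d)$ set $u := [q_0^{-1/2}]v$, so $v = [q_0^{1/2}]u$. Since $q_0$ is real and bounded away from $0$ and $\infty$ by \eqref{d1.0.0}, the multiplication operator $[q_0^{1/2}]$ is bounded and self-adjoint, and
$$
(Tv,v) = \bigl([q_0^{1/2}]\A u,\, [q_0^{1/2}]u\bigr) = \bigl([q_0]\A u, u\bigr).
$$
By \eqref{e1.11} the right-hand side has non-negative real part, hence $\operatorname{Re}(Tv,v)\ge 0$ for all $v$, i.e.\ $T$ is accretive.

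Second, because $[q_0^{\pm 1/2}]$ are bounded with bounded inverses, $T$ and $\A(a,\mu)$ are similar, so $\sigma(\A(a,\mu))=\sigma(T)$. A standard computation for bounded accretive operators shows that for $\operatorname{Re}z<0$,
$$
\|(T-zI)v\|\cdot\|v\|\ge |\operatorname{Re}\bigl((T-zI)v,v\bigr)| \ge |\operatorname{Re}z|\,\|v\|^{2},
$$
and the analogous estimate for $T^{*}-\bar z I$ (whose real part of the quadratic form coincides with that of $T$) gives dense range; hence $T-zI$ is boundedly invertible with $\|(T-zI)^{-1}\|\le|\operatorname{Re}z|^{-1}$. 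In particular $\sigma(T)\subset\{\operatorname{Re}\lambda\ge 0\}$, and thus so is $\sigma(\A(a,\mu))$.

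Finally, from the identity $(\A(a,\mu)-zI)^{-1}=[q_0^{-1/2}](T-zI)^{-1}[q_0^{1/2}]$, the bounds $\|[q_0^{1/2}]\|\le q_+^{1/2}$ and $\|[q_0^{-1/2}]\|\le q_-^{-1/2}$ (consequences of \eqref{d1.0.0}) yield
$$
\|(\A(a,\mu)-zI)^{-1}\|\le q_+^{1/2}q_-^{-1/2}|\operatorname{Re}z|^{-1},\qquad \operatorname{Re}z<0,
$$
which is exactly \eqref{10.02.2}. There is no real obstacle here; the only point to keep honest is that $q_0$ is real-valued, so that $[q_0^{1/2}]$ is self-adjoint and the conjugation by it genuinely realises the similarity between $\A(a,\mu)$ and the accretive operator $T$.
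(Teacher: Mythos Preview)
Your argument is correct and follows essentially the same route as the paper: use Lemma~\ref{lemma1.1} to see that $T=[q_0^{1/2}]\A(a,\mu)[q_0^{-1/2}]$ is accretive, invoke the standard resolvent bound $\|(T-zI)^{-1}\|\le|\operatorname{Re}z|^{-1}$ for accretive operators, and then conjugate back by $[q_0^{\pm 1/2}]$ to obtain \eqref{10.02.2}. The only difference is that you spell out in detail the computation $(Tv,v)=([q_0]\A u,u)$ and the lower bound $\|(T-zI)v\|\ge|\operatorname{Re}z|\,\|v\|$, which the paper leaves implicit.
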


\begin{proof}
According to identity  (\ref{e1.11}), the operator $[q_{0}^{1/2}]\A(a,\mu) [q_{0}^{-1/2}]$ is accretive. Therefore, for all $z\in\mathbb C$
such that $\operatorname{Re}z<0$ the operator $[q_{0}^{1/2}]\A(a,\mu)[q_{0}^{-1/2}] -zI$ is invertible, and the resolvent
  $ \bigl([q_{0}^{1/2}]\A(a,\mu)[q_{0}^{-1/2}]-zI \bigr)^{-1}$ satisfies the estimate
\begin{equation}
\label{1.19a}
\bigl\| \bigl([q_{0}^{1/2}]\A(a,\mu)[q_{0}^{-1/2}]-zI \bigr)^{-1} \bigr\| \le |\operatorname{Re}z|^{-1}.
\end{equation}
Consequently, the spectrum $\sigma(\A(a,\mu))$ is located in the right half-plane.
Combining an evident relation
$$
(\A(a,\mu)-zI)^{-1} =  [q_{0}^{-1/2}] \bigl([q_{0}^{1/2}]\A(a,\mu)[q_{0}^{-1/2}]-zI \bigr)^{-1} [q_{0}^{1/2}]
$$
with \eqref{d1.0.0} and \eqref{1.19a},  we derive estimate \eqref{10.02.2}.
\end{proof}

\subsection{Representation for the quadratic form of the operator  $\operatorname{Re}[q_{0}]\A(\bxi; a,\mu)$}
\begin{lemma}
\label{lemmau1.1}
Assume that  conditions \eqref{h1.1}--\eqref{h1.3} are fulfilled. Then the following relation holds:
\begin{multline}
\label{ue1.11}
\mathrm{Re}([q_{0}]\A(\bxi; a,\mu)u,u)
\\
= \frac{1}{2}\intop_{\Omega}q_{0}(\x)\,d\x\intop_{\R^d}\,d\y\,
a(\x-\y)\mu(\x,\y) \bigl|e^{i\langle \bxi, \x\rangle}u(\x)-e^{i \langle \bxi, \y\rangle}u(\y) \bigr|^{2},\ \ u\in L_{2}(\Omega),\ \ \bxi\in\widetilde\Omega.
\end{multline}
Here we identify a function $u\in L_{2}(\Omega)$ with its periodic extension on the whole $\R^d$.
\end{lemma}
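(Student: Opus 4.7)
The natural idea is to reduce to Lemma \ref{lemma1.1} via the Bloch-type substitution $v(\x):=e^{i\langle\bxi,\x\rangle}u(\x)$, where $u$ is identified with its $\Z^d$-periodic extension to $\R^d$. Note that $|v(\x)|^2=|u(\x)|^2$ is periodic even though $v$ itself is only quasi-periodic.

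First I would rewrite the operator $\B(\bxi;a,\mu)$ acting on this periodic extension. Starting from \eqref{B_xi} and \eqref{a_tilde}, unfolding the sum over $\n\in\Z^d$ by the change of variables $\y\mapsto \y+\n$ and using the $\Z^d$-periodicity of $\mu$ in its second argument together with the periodicity of $u$, one obtains
\begin{equation*}
\B(\bxi;a,\mu)u(\x)=e^{-i\langle\bxi,\x\rangle}\intop_{\R^d}a(\x-\y)\mu(\x,\y)\,v(\y)\,d\y,\quad \x\in\Omega.
\end{equation*}
Recalling $p(\x)=\int_{\R^d}a(\x-\y)\mu(\x,\y)\,d\y$, one then gets the key representation
\begin{equation*}
([q_{0}]\A(\bxi;a,\mu)u,u)_{L_{2}(\Omega)}=\intop_{\Omega}q_{0}(\x)\overline{v(\x)}\,d\x\intop_{\R^d}a(\x-\y)\mu(\x,\y)\bigl(v(\x)-v(\y)\bigr)\,d\y.
\end{equation*}
This is formally the quadratic form of Lemma \ref{lemma1.1}, except that the outer integration in $\x$ is over $\Omega$ (not $\R^d$) and $v$ is quasi-periodic.

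Next, following the pattern of the proof of Lemma \ref{lemma1.1}, I would split $\overline{v(\x)}(v(\x)-v(\y))=|v(\x)-v(\y)|^{2}+\overline{v(\y)}(v(\x)-v(\y))$, giving
\begin{equation*}
([q_{0}]\A(\bxi;a,\mu)u,u)_{L_{2}(\Omega)} = \frac{1}{2}A[u]+\tfrac12 A[u]+J[u],
\end{equation*}
where $A[u]$ is the target expression on the right-hand side of \eqref{ue1.11} (rewritten in terms of $v$) and $J[u]$ is the residual involving $\overline{v(\y)}(v(\x)-v(\y))$. The plan is to show that $J[u]=-\overline{([q_{0}]\A(\bxi;a,\mu)u,u)}_{L_{2}(\Omega)}$, which upon taking real parts will yield \eqref{ue1.11}.

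To compute $J[u]$, I would split it as $J_1-J_2$ where $J_2$ contains $|v(\y)|^2$. In $J_2$, I swap the order of integration (the outer $\x$-integral is over $\Omega$), then decompose the $\y$-integral over $\R^d$ into a sum of integrals over translates $\Omega+\n$; by the periodicity of $\mu$ and the identity $|v(\y+\n)|^2=|v(\y)|^2$ the $\n$-summation collapses into $\widetilde a(\mathbf 0,\x-\y)$. At this point the identity \eqref{10.02.1} (i.e.\ $\A^{*}(\mathbf 0)q_{0}=0$) converts the inner $\x$-integral into $p(\y)q_{0}(\y)$, producing $J_{2}=\int_{\Omega}p(\y)q_{0}(\y)|v(\y)|^{2}\,d\y$. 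For $J_1$, no manipulation is needed: interchanging the rôles of complex conjugation matches it with the conjugate of the non-diagonal part of $([q_{0}]\A(\bxi)u,u)$. Combining the two contributions gives $J[u]+\overline{([q_0]\A(\bxi)u,u)}=0$, exactly as in the proof of Lemma \ref{lemma1.1}.

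The main obstacle is the bookkeeping in handling the quasi-periodicity of $v$: the substitution $\y\mapsto\y+\n$ must interact correctly with the periodicity of $\mu$ and of $|v|^{2}$, and the reduction to the identity \eqref{10.02.1} must happen precisely when the outer variable is integrated over $\Omega$. Once that is done, the rest is a direct mirror of the proof of Lemma \ref{lemma1.1}.
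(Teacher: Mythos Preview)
Your proposal is correct and follows essentially the same route as the paper: unfold $\B(\bxi;a,\mu)$ into an integral over $\R^d$ via the quasi-periodic substitution $v(\x)=e^{i\langle\bxi,\x\rangle}u(\x)$, split $\overline{v(\x)}(v(\x)-v(\y))=|v(\x)-v(\y)|^{2}+\overline{v(\y)}(v(\x)-v(\y))$, and then show the remainder $J[u]=-\overline{([q_0]\A(\bxi)u,u)}$ by reducing the $|v(\y)|^{2}$ term to $\int_\Omega p(\y)q_0(\y)|u(\y)|^{2}\,d\y$ via \eqref{10.02.1}. One bookkeeping slip: since $A[u]$ already carries the factor $\tfrac12$, the identity after the split should read $([q_{0}]\A(\bxi)u,u)=2A[u]+J[u]$, not $\tfrac12 A[u]+\tfrac12 A[u]+J[u]$; this is exactly what is needed to conclude $\mathrm{Re}([q_{0}]\A(\bxi)u,u)=A[u]$.
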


\begin{proof}
Due to \eqref{A_xi}, \eqref{B_xi} and (\ref{e1.9}) the operator $\A(\bxi; a,\mu)$ admits the representation
\begin{equation}\label{ue1.12}
\A(\bxi; a,\mu)
u(\x)=\intop_{\Omega} \left( \widetilde{a}({\mathbf 0},\x-\y)u(\x)-\widetilde{a}(\bxi,\x-\y)u(\y)\right) \mu(\x,\y)\, d\y,\
\ \x\in\Omega,\ \ u\in L_{2}(\Omega).
\end{equation}
Taking into account \eqref{a_tilde} and identifying each function $u\in L_{2}(\Omega)$ with its $\Z^d$-periodic
extension on the whole $\R^d$, we can rearrange  (\ref{ue1.12}) as follows:
\begin{multline}
\label{ue1.13}
\A(\bxi; a,\mu) u(\x)=\sum_{\n\in\Z^d}\intop_{\Omega} \bigl(a(\x-\y+\n)u(\x)-a(\x-\y+\n)
e^{-i \langle \bxi, \x-\y+\n \rangle}u(\y)\bigr)\mu(\x,\y) \,d\y
\\
= \intop_{\R^d}a(\x-\y)\mu(\x,\y) \bigl(u(\x)-e^{-i \langle \bxi, \x-\y \rangle}u(\y)\bigr) \,d\y
\\
= \intop_{\R^d}a(\x-\y)\mu(\x,\y)e^{-i \langle \bxi, \x\rangle} \bigl(e^{i \langle \bxi, \x\rangle}u(\x)
-e^{i \langle \bxi, \y \rangle}u(\y)\bigr) \,d\y,\ \ \x\in\Omega,\ \ u\in L_{2}(\Omega).
\end{multline}
This yields the following expression for the quadratic form of the operator $[q_{0}]\A(\bxi; a,\mu)$:
\begin{multline}
\label{ue1.14}
([q_{0}]\A(\bxi; a,\mu)u,u)=\intop_{\Omega}d\x\,q_{0}(\x)\intop_{\R^d} \,d\y\,
a(\x-\y)\mu(\x,\y)e^{-i \langle \bxi, \x\rangle}\overline{u(\x)} \bigl( e^{i \langle \bxi, \x \rangle}u(\x)
-e^{i \langle \bxi, \y \rangle}u(\y) \bigr)
\\
=\intop_{\Omega}d\x\,q_{0}(\x)\int_{\R^d}d\y\, a(\x-\y)\mu(\x,\y) \bigl|e^{i \langle \bxi, \x \rangle}u(\x)
-e^{i \langle \bxi, \y \rangle}u(\y) \bigr|^{2}+J(\bxi)[u],
\ \ u\in L_{2}(\Omega),
\end{multline}
where the functional $J(\bxi)[u]$ is defined by
\begin{equation}
\label{ue1.15}
J(\bxi)[u]:=\intop_{\Omega}d\x\,q_{0}(\x)\intop_{\R^d}d\y\, a(\x-\y)\mu(\x,\y)e^{-i \langle \bxi, \y \rangle}\overline{u(\y)}
\bigl(e^{i \langle \bxi, \x \rangle}u(\x)-e^{i \langle \bxi, \y\rangle}u(\y) \bigr),\ \ u\in L_{2}(\Omega).
\end{equation}
Rearranging the expression in the middle part of \eqref{ue1.14} we obtain
\begin{equation}\label{ue1.26a}
([q_{0}]\A(\bxi; a,\mu)u,u)
=\intop_{\Omega}d\x\,q_{0}(\x)p(\x)|u(\x)|^{2}-\intop_{\Omega}d\x\,q_{0}(\x)\intop_{\R^{d}}d\y\, a(\x-\y)\mu(\x,\y)e^{-i\langle\bxi,\x-\y\rangle}\overline{u(\x)}u(\y).
\end{equation}
Dividing the right-hand side of  (\ref{ue1.15}) into two terms and  using \eqref{10.02.1} and \eqref{ue1.26a}, we have
\begin{multline}\label{ue1.16}
J(\bxi)[u]=-\intop_{\Omega}d\x\, q_{0}(\x) \intop_{\R^d}d\y\, a(\x-\y)\mu(\x,\y)|u(\y)|^{2}
\\
+\intop_{\Omega}d\x\,q_{0}(\x)\intop_{\R^{d}}d\y\, a(\x-\y)\mu(\x,\y)e^{i\langle\bxi,\x-\y\rangle}u(\x)\overline{u(\y)}
\\
=-\intop_{\Omega}d\y\,|u(\y)|^{2}\intop_{\Omega}d\x\, q_{0}(\x)\widetilde a(0,\x-\y)\mu(\x,\y)
\\
+\intop_{\Omega}d\x\,q_{0}(\x)\intop_{\R^{d}}d\y\, a(\x-\y)\mu(\x,\y)e^{i\langle\bxi,\x-\y\rangle}u(\x)\overline{u(\y)}
\\
=-\intop_{\Omega}d\y\,|u(\y)|^{2}q_{0}(\y)p(\y)
\\
+\intop_{\Omega}d\x\,q_{0}(\x)\intop_{\R^{d}}d\y\, a(\x-\y)\mu(\x,\y)e^{i\langle\bxi,\x-\y\rangle}u(\x)\overline{u(\y)}
\\
=-\overline{( [q_0] \A(\bxi; a,\mu)u,u)},\ \ u\in L_{2}(\Omega).
\end{multline}
FInally, relation (\ref{ue1.11}) follows from  (\ref{ue1.14}) and (\ref{ue1.16}).
\end{proof}

\begin{corollary}
The operator $[q_{0}^{1/2}]\A(\bxi; a,\mu) [q_{0}^{-1/2}]$ is accretive, the spectrum $\sigma(\A(\bxi; a,\mu))$ is
situated  in the right half-plane $\{z\in\mathbb C\,:\, \mathrm{Re}\, z\geqslant 0\}$; the following estimate holds:
\begin{equation*}\label{10.02.2.1}
\|(\A(\bxi; a,\mu)-zI)^{-1}\|\le q_{+}^{1/2}q_{-}^{-1/2}|\operatorname{Re}z|^{-1},\ \ \operatorname{Re}z<0,\ \ \bxi\in\widetilde\Omega.
\end{equation*}
\end{corollary}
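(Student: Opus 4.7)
The plan is to follow the template of the proof of Corollary \ref{corollary1.7} verbatim, replacing the use of Lemma \ref{lemma1.1} by the new quadratic form identity \eqref{ue1.11} from Lemma \ref{lemmau1.1}. Since $q_{0} \ge q_{-} > 0$, $a \ge 0$, and $\mu \ge \mu_{-} > 0$, the right-hand side of \eqref{ue1.11} is manifestly non-negative, which gives $\mathrm{Re}\,([q_{0}]\A(\bxi; a,\mu) u, u) \ge 0$ for every $u \in L_{2}(\Omega)$ and every $\bxi \in \widetilde\Omega$.

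Next I would transfer this non-negativity to the similarity transform $[q_{0}^{1/2}]\A(\bxi; a,\mu)[q_{0}^{-1/2}]$. For $v \in L_{2}(\Omega)$, set $u = [q_{0}^{-1/2}] v$, so $v = [q_{0}^{1/2}] u$ and $[q_{0}^{1/2}] v = [q_{0}] u$. Since $[q_{0}^{1/2}]$ is a real bounded multiplier, it is self-adjoint, hence
\begin{equation*}
\bigl([q_{0}^{1/2}]\A(\bxi; a,\mu)[q_{0}^{-1/2}] v, v\bigr) = \bigl(\A(\bxi; a,\mu) u, [q_{0}] u\bigr) = \bigl([q_{0}]\A(\bxi; a,\mu) u, u\bigr),
\end{equation*}
and taking the real part yields accretivity of $[q_{0}^{1/2}]\A(\bxi; a,\mu)[q_{0}^{-1/2}]$.

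Accretivity implies that for $\mathrm{Re}\, z < 0$ the operator $[q_{0}^{1/2}]\A(\bxi; a,\mu)[q_{0}^{-1/2}] - z I$ is invertible and satisfies
\begin{equation*}
\bigl\|\bigl([q_{0}^{1/2}]\A(\bxi; a,\mu)[q_{0}^{-1/2}] - z I\bigr)^{-1}\bigr\| \le |\mathrm{Re}\, z|^{-1}.
\end{equation*}
Because $[q_{0}^{1/2}]$ is bounded with bounded inverse, the operator $\A(\bxi; a,\mu)$ is similar to $[q_{0}^{1/2}]\A(\bxi; a,\mu)[q_{0}^{-1/2}]$, so the two operators share the same spectrum; in particular $\sigma(\A(\bxi; a,\mu)) \subset \{\mathrm{Re}\, z \ge 0\}$.

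Finally I would use the similarity identity
\begin{equation*}
(\A(\bxi; a,\mu) - z I)^{-1} = [q_{0}^{-1/2}] \bigl([q_{0}^{1/2}]\A(\bxi; a,\mu)[q_{0}^{-1/2}] - z I\bigr)^{-1} [q_{0}^{1/2}]
\end{equation*}
together with the bounds $\|[q_{0}^{1/2}]\| \le q_{+}^{1/2}$ and $\|[q_{0}^{-1/2}]\| \le q_{-}^{-1/2}$ from \eqref{d1.0.0} to conclude the claimed estimate. There is no substantial obstacle: the only non-trivial input is the quadratic form identity \eqref{ue1.11}, which has already been established in Lemma \ref{lemmau1.1}; everything else is bookkeeping parallel to the proof of Corollary \ref{corollary1.7}.
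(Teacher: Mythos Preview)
Your proposal is correct and follows exactly the approach the paper intends: the paper's own proof simply states that the argument is ``similar to that of Corollary~\ref{corollary1.7}'', and you have spelled out precisely those details, replacing Lemma~\ref{lemma1.1} by Lemma~\ref{lemmau1.1} as required.
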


\begin{proof}
The proof of this statement is similar to that of Corollary \ref{corollary1.7}.
\end{proof}

\subsection{Estimates of the quadratic form of the operator $\operatorname{Re}[q_{0}]\A(\bxi;a,\mu)$}\label{sec1.4}
We consider separately the case $\mu=\mu_{0}\equiv 1$.  For brevity, let us denote
$\A_{0}(\bxi; a):=\A(\bxi; a,\mu_{0})$, $\B_{0}(\bxi; a):=\B(\bxi; a,\mu_{0})$.
Remark \ref{remark1.5} and relations (\ref{h1.2}), (\ref{d1.0.0}) and (\ref{ue1.11}) imply the following two-sided
estimates:
\begin{multline}
\label{e1.17}
\mu_{-}q_{-}\mathrm{Re}(\A_{0}(\bxi; a)u,u)\le\mathrm{Re}([q_{0}]\A(\bxi; a,\mu)u,u)\le\mu_{+}q_{+}\mathrm{Re}(\A_{0}(\bxi; a)u,u),
\\
u\in L_{2}(\Omega),\ \ \bxi\in\widetilde\Omega.
\end{multline}
The operators $\A_{0}(\bxi; a)$, $\bxi\in\widetilde\Omega$,  are diagonalized by means of the unitary discrete
Fourier transform ${\mathcal F}: L_{2}(\Omega)\to \ell_{2}(\Z^d)$ defined by the relations
\begin{gather*}
{\mathcal F} u(\n)=\intop_{\Omega}u(\x)e^{-2\pi i \langle \n, \x \rangle}d\x,\ \ \n\in\Z^d,\ \ u\in L_{2}(\Omega);
\\
{\mathcal F}^{*}v(\x)=\sum_{\n\in\Z^d}v_{\n}e^{2\pi i \langle \n, \x\rangle},\ \ \x\in\Omega,\ \
v=\{v_{\n}\}_{\n\in\Z^d}\in\ell_{2}(\Z^d).
\end{gather*}
By direct computation it is verified that
\begin{equation}
\label{e1.18}
\A_{0}(\bxi; a)= {\mathcal F}^{*} \bigl[\widehat a({\mathbf 0})-\widehat a(2\pi \n+\bxi) \bigr] {\mathcal F},\ \ \widehat a(\k):=\intop_{\R^d}a(\x)e^{-i \langle \k, \x \rangle}d\x,\ \ \k\in\R^d,
\end{equation}
where the notation $[\widehat{a}(\mathbf{0})-\widehat a(2\pi \n+\bxi)]$  stands for operator of multiplication
by the function  \hbox{$\widehat{a}(\mathbf{0})-\widehat a(2\pi \n+\bxi)$} in the space  $\ell_2(\Z^d)$.
Thus, the sequence  $\{\widehat A_{\n}(\bxi)\}_{\n\in\Z^d}$ is the symbol of the operator $\operatorname{Re}\A_{0}(\bxi; a)$ with
\begin{equation*}
\widehat A_{\n}(\bxi)=\widehat A(\bxi+2\pi \n)=\operatorname{Re}(\widehat a(\mathbf{0})-\widehat a(2\pi \n+\bxi))
=\intop_{\R^d} \bigl(1-\cos( \langle \z, \bxi+2\pi \n \rangle) \bigr)a(\z)\,d\z, \ \ \n\in\Z^d,\ \ \bxi\in\widetilde\Omega.
\end{equation*}
Let us explore the symbol of the operator $\operatorname{Re}\A_{0}(\bxi; a)$, $\bxi\in\widetilde\Omega$, in more detail
and estimate the quadratic form of the operator $\operatorname{Re}\A(\bxi; a,\mu)$. To this end notice that, under
condition \eqref{h1.1},  the quantity
\begin{equation*}
\label{e1.17a}
\widehat A(\y) :=\intop_{\R^d} \bigl(1-\cos(\langle \z, \y\rangle) \bigr)a(\z)\,d\z
= 2\intop_{\R^d}\sin^{2}\left(\frac{\langle \z, \y\rangle}{2}\right)a(\z)\,d\z,\ \ \y \in \R^d,
\end{equation*}
depends continuously on $\y\in\R^d$ and, by the Riemann-Lebesgue Lemma, converges to $\|a\|_{L_1}>0$, as
$| \y|\to\infty$. It is also clear that $\widehat A(\y) >0$, if $\y\not=0$.
Consequently,
\begin{equation}
\label{e1.19}
\min_{| \y| \ge r} \widehat A(\y)=:\mathcal{C}_{r}(a)>0,\ \ r>0.
\end{equation}
Since for $\bxi\in\widetilde\Omega$ and $\n\in\Z^d\setminus\{ \mathbf{0} \}$ we have $| \bxi + 2\pi \n |\ge \pi$, then
\begin{equation}
\label{e1.22}
\widehat A(\bxi+2\pi \n)\ge \mathcal{C}_{\pi}(a),\ \ \bxi \in \widetilde\Omega,\ \ \n\in\Z^d\setminus\{\mathbf{0}\}.
\end{equation}

Under the condition $M_2(a) < \infty$ the function
\begin{equation*}
\label{e1.18a}
\intop_{\R^d}a(\z) \langle \z, \y \rangle^{2}\, d\z=:M_{a}(\y)
\end{equation*}
depends continuously on $\y\in\R^d$, and $M_{a}(\y)>0$, if $\y\not=0$. Therefore,
\begin{equation}
\label{e1.20}
\min_{|\boldsymbol{\theta}|=1}M_{a}(\boldsymbol{\theta}) =:\mathcal{M}(a)>0.
\end{equation}

\begin{lemma}
\label{lemma1.3}
Let conditions  \eqref{h1.1}--\eqref{h1.3} be fulfilled, and assume that $M_2(a)<\infty$. Then
the following estimate holds:
\begin{equation*}
\label{e1.21}
\widehat A(\bxi+2\pi \n)\ge C(a)|\bxi|^{2},\ \ \bxi\in\widetilde\Omega,\ \ \n\in\Z^d,
\end{equation*}
where
\begin{equation}
\label{C(a)}
C(a):=\min \Bigl\{\frac{1}{4}\mathcal{M}(a),\mathcal{C}_{r(a)}(a)\pi^{-2}d^{-1},\mathcal{C}_{\pi}(a)\pi^{-2}d^{-1}
\Bigr\}>0,
\end{equation}
the value of $\mathcal{C}_{r}(a)$, $r>0$, is given by  \eqref{e1.19}, the constant $\mathcal{M}(a)$ is defined in
\eqref{e1.20}, and $r(a)$ satisfies condition  \eqref{dop5} below.
\end{lemma}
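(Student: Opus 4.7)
The plan is to split $(\bxi,\n) \in \widetilde{\Omega} \times \Z^d$ into three regions, each handled by a different bound, so that the minimum of the three estimates reproduces exactly \eqref{C(a)}. The straightforward cases are the ``far'' ones. If $\n \neq \mathbf{0}$, then estimate \eqref{e1.22} gives $\widehat A(\bxi+2\pi\n) \geq \mathcal{C}_\pi(a)$; combining with the elementary observation $|\bxi|^2 \leq \pi^2 d$ (since $\bxi\in[-\pi,\pi)^d$) produces the third entry of \eqref{C(a)}. Similarly, if $\n=\mathbf{0}$ and $|\bxi|\geq r(a)$, then \eqref{e1.19} with $r=r(a)$ gives $\widehat A(\bxi)\geq \mathcal{C}_{r(a)}(a)$, which combined with $|\bxi|^2\leq \pi^2 d$ yields the second entry.

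The essential step concerns the small-$\bxi$ regime $\n=\mathbf{0}$, $|\bxi| < r(a)$, where I need the quadratic estimate $\widehat A(\bxi) \geq \tfrac{1}{4}\mathcal{M}(a)|\bxi|^2$. Writing $\bxi=|\bxi|\bt$ with $|\bt|=1$ and using the trigonometric identity $1-\cos s = \tfrac{s^2}{2}\operatorname{sinc}^2(s/2)$, where $\operatorname{sinc}(x):=\sin(x)/x$ for $x\neq 0$ and $\operatorname{sinc}(0):=1$, I obtain
\[
\frac{\widehat A(\bxi)}{|\bxi|^2} \,=\, \frac{1}{2}\int_{\R^d} a(\z)\,\langle \z,\bt\rangle^2 \,\operatorname{sinc}^2 \bigl(\tfrac{1}{2}|\bxi|\langle \z,\bt\rangle \bigr) \,d\z.
\]
The integrand is dominated by $\tfrac{1}{2}\, a(\z)\,|\z|^2$, which is integrable thanks to the hypothesis $M_2(a)<\infty$. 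Hence dominated convergence shows that the right-hand side converges to $\tfrac{1}{2}M_a(\bt)$ as $|\bxi|\to 0$.

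The main subtlety is to upgrade this convergence to be \emph{uniform} in $\bt\in S^{d-1}$, since only then can a single radius $r(a)>0$ be chosen so that the estimate holds simultaneously for all directions. Uniformity follows by splitting the integral into the pieces $|\z|\leq R$ and $|\z|>R$: the outer piece is controlled by the tail $\int_{|\z|>R} a(\z)|\z|^2\,d\z$, which is arbitrarily small by choosing $R$ large (again using $M_2(a)<\infty$), while on $|\z|\leq R$ the inequality $|1-\operatorname{sinc}^2(s)| \leq C s^2$ near zero produces a bound of order $|\bxi|^2 R^2$, uniformly in $\bt$. Combined with the strictly positive lower bound $M_a(\bt)\geq \mathcal{M}(a)>0$ from \eqref{e1.20}, this fixes $r(a)$ (the role of condition \eqref{dop5}) so that for $|\bxi|\leq r(a)$ the ratio $\widehat A(\bxi)/|\bxi|^2$ exceeds $\tfrac{1}{4}\mathcal{M}(a)$. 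Taking the minimum over the three cases reproduces the constant $C(a)$ defined in \eqref{C(a)} and completes the proof.
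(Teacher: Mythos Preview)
Your proof is correct and follows essentially the same three-case split as the paper. The only cosmetic difference is in the small-$|\bxi|$ regime: the paper writes $1-\cos\lambda=\lambda^{2}\Phi(\lambda)$ with $\Phi(\lambda)=\int_{0}^{1}t\,dt\int_{0}^{1}\cos(st\lambda)\,ds$ and controls the single scalar function $\mathcal{E}(\bxi)=\int_{\R^d}a(\z)|\z|^{2}\bigl|\Phi(\langle\bxi,\z\rangle)-\tfrac12\bigr|\,d\z$, whose vanishing at $\bxi=\mathbf{0}$ (by dominated convergence) immediately gives \eqref{dop5} without separating $\bxi=|\bxi|\bt$; this sidesteps the explicit uniformity-in-$\bt$ discussion you carry out, though your tail-splitting argument achieves the same end.
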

\begin{proof}
Define the function $\Phi(\lambda)$ by the relation
\begin{equation}\label{dop1}
1-\cos\lambda=\lambda^{2}\int_{0}^{1}dt\,t\,\int_{0}^{1}ds\,\cos(st\lambda)=:\lambda^{2}\Phi(\lambda),\ \ \lambda\in\mathbb{R}.
\end{equation}
Then $\Phi(\lambda)$ possesses the following properties:
\begin{equation}\label{dop2}
|\Phi(\lambda)|\le 1/2,\ \ \hbox{and } \ \Phi(\lambda)\to 1/2,\ \ \hbox{as }\lambda\to 0.
\end{equation}
By \eqref{dop1} and the definition of $\widehat A$ and $M_{a}$ we have
\begin{equation}\label{dop3}
\Bigl| \widehat A(\bxi)-\frac{1}{2}M_{a}(\bxi) \Bigr| \le |\bxi|^{2}\int_{\mathbb{R}^{d}}a(\z)|\z|^{2} \Bigl|\Phi(\langle\bxi,\z\rangle)-\frac{1}{2}\Bigr| \, d\z=:|\bxi|^{2}\mathcal{E}(\bxi),\ \ \bxi\in\mathbb{R}^{d}.
\end{equation}
In view of  \eqref{dop2}  and the condition $M_{2}(a)<\infty$, the function $\mathcal{E}(\bxi)$ tends to zero
as $|\bxi|\to 0$.
We choose $r(a)>0$ in such a way that
\begin{equation}\label{dop5}
\mathcal{E}(\bxi)\le\frac{1}{4}\mathcal{M}(a),\ \ |\bxi|\le r(a).
\end{equation}
Combining  \eqref{e1.20}, \eqref{dop3} and \eqref{dop5} yields
\begin{equation}\label{dop6}
\widehat A(\bxi)\ge\frac{1}{4}M_{a}(\bxi)\ge\frac{1}{4}\mathcal{M}(a)|\bxi|^{2},\ \ |\bxi|\le r(a),
\end{equation}
and the statements of Lemma follow from \eqref{e1.19}, \eqref{e1.22} and \eqref{dop6}.
\end{proof}

\begin{remark}
It is straightforward to check that, under the additional condition $M_{3}(a)<\infty$, the radius $r(a)$
can be calculated explicitly in terms of $\mathcal{M}(a)$ and $M_{3}(a)$, see  \cite[Lemma 1.3]{PSlSuZh}.
\end{remark}

Let us  return to the study of the operator  $\A(\bxi;a,\mu)$ in the general case.
Under conditions   \eqref{h1.1}--\eqref{h1.3}, relations \eqref{e1.17}, \eqref{e1.18} and \eqref{e1.22}
yield the estimate
\begin{equation*}\label{e1.31}
\mathrm{Re}([q_{0}]\A(\bxi; a,\mu)u,u)\ge \mu_{-}q_{-}\mathcal{C}_{\pi}(a)\|u\|^{2}_{L_2(\Omega)},\ \
u\in L_{2}(\Omega)\ominus\mathcal{L}\{\mathbf{1}_{\Omega}\},\ \
\bxi\in\widetilde\Omega.
\end{equation*}
The following statement is a consequence of  (\ref{e1.17}), (\ref{e1.18}) and Lemma \ref{lemma1.3}:

\begin{proposition}
\label{prop1.4}
Let conditions \eqref{h1.1}--\eqref{h1.3} be satisfied, and assume that $M_2(a)<\infty$. Then
\begin{equation}\label{e1.30}
\mathrm{Re}([q_{0}]\A(\bxi;a,\mu)u,u)\ge \mu_{-}q_{-}C(a)|\bxi|^{2}\|u\|_{L_2(\Omega)}^{2},\ \ u\in
L_{2}(\Omega),\ \ \bxi\in\widetilde\Omega.
\end{equation}
\end{proposition}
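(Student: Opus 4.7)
The strategy is to chain together the three ingredients the authors explicitly name: the two-sided estimate \eqref{e1.17}, the Fourier diagonalization \eqref{e1.18}, and the pointwise symbol bound from Lemma \ref{lemma1.3}. The claim reduces to a lower bound on $\mathrm{Re}(\A_0(\bxi;a)u,u)$ in the model case $\mu\equiv 1$, and in that case the Fourier transform $\mathcal F$ turns the quadratic form into a diagonal sum, so a pointwise symbol inequality converts directly to a lower bound on the form.

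First I would invoke the left inequality in \eqref{e1.17} to reduce the problem to estimating $\mathrm{Re}(\A_0(\bxi;a)u,u)$ from below by a multiple of $|\bxi|^2\|u\|^2_{L_2(\Omega)}$; the factor $\mu_-q_-$ is then just propagated to the final answer. Next I would apply \eqref{e1.18}: the operator $\mathrm{Re}\,\A_0(\bxi;a)$ is unitarily equivalent, via $\mathcal F$, to multiplication by the sequence $\{\widehat A(\bxi+2\pi\n)\}_{\n\in\Z^d}$ in $\ell_2(\Z^d)$. Hence
\begin{equation*}
\mathrm{Re}(\A_0(\bxi;a)u,u) \;=\; \sum_{\n\in\Z^d}\widehat A(\bxi+2\pi\n)\,|(\mathcal F u)(\n)|^2.
\end{equation*}

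Now I would apply Lemma \ref{lemma1.3}, which gives $\widehat A(\bxi+2\pi\n)\ge C(a)|\bxi|^2$ uniformly in $\bxi\in\widetilde\Omega$ and $\n\in\Z^d$. Substituting into the display and using Parseval for $\mathcal F$ yields
\begin{equation*}
\mathrm{Re}(\A_0(\bxi;a)u,u)\;\ge\;C(a)|\bxi|^2\sum_{\n\in\Z^d}|(\mathcal F u)(\n)|^2\;=\;C(a)|\bxi|^2\|u\|^2_{L_2(\Omega)}.
\end{equation*}
Combining this with the left inequality in \eqref{e1.17} gives \eqref{e1.30}.

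I do not foresee any serious obstacle: all the nontrivial work (the identity for the quadratic form in Lemma \ref{lemmau1.1}, the resulting two-sided sandwich \eqref{e1.17}, and the symbol bound of Lemma \ref{lemma1.3}) has already been done. The only things to be careful about are: (i) that the symbol bound from Lemma \ref{lemma1.3} is in fact uniform in $\n$, which is exactly how it is stated; and (ii) that the constant $C(a)$ used here coincides with the one defined in \eqref{C(a)}, so no implicit improvement is needed.
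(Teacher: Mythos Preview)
Your proof is correct and follows exactly the route the paper indicates: the paper itself merely states that Proposition~\ref{prop1.4} is a consequence of \eqref{e1.17}, \eqref{e1.18} and Lemma~\ref{lemma1.3}, and your argument spells out precisely this chain (the sandwich estimate, the Fourier diagonalization of $\operatorname{Re}\A_0(\bxi;a)$, the uniform symbol bound, and Parseval).
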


\begin{remark}
In the symmetric case a similar estimate was proved in \cite[Proposition 1.4]{PSlSuZh}.
\end{remark}

\begin{corollary}\label{corollary1.33}
For any $\bxi\in\widetilde\Omega$ the operator
$[q_{0}^{1/2}] \A(\bxi; a,\mu) [q_{0}^{-1/2}] -\mu_{-}q_{-}q_{+}^{-1}C(a)|\bxi|^{2}I$ is accretive,
the spectrum of
$\A(\bxi; a,\mu)$ is situated in the half-plane $\{z\in\mathbb{C}:
\operatorname{Re} z\geqslant \mu_{-}q_{-}q_{+}^{-1}C(a)|\bxi|^{2}\}$,
and the following estimate holds:
\begin{equation}\label{e1.30.1}
\bigl\|(\A(\bxi; a,\mu)-zI)^{-1} \bigr\| \le q_{-}^{-1/2}q_{+}^{1/2} \bigl(\mu_{-}q_{-}q_{+}^{-1}C(a)|\bxi|^{2}-
\operatorname{Re} z \bigr)^{-1},\ \ \operatorname{Re} z < \mu_{-}q_{-}q_{+}^{-1}C(a)|\bxi|^{2}.
\end{equation}
\end{corollary}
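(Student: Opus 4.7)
The plan is to mimic the argument used for Corollary \ref{corollary1.7}, but now starting from the quantitative quadratic form lower bound of Proposition \ref{prop1.4} instead of the trivial nonnegativity used there. The three statements (accretivity of the shifted similar operator, spectral inclusion, resolvent bound) are linked in a standard way, so once accretivity is established the rest is routine.

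First I would translate the inequality \eqref{e1.30} into a statement about the similar operator $[q_{0}^{1/2}]\A(\bxi;a,\mu)[q_{0}^{-1/2}]$. The identity
\begin{equation*}
([q_{0}^{1/2}]\A(\bxi;a,\mu)[q_{0}^{-1/2}]v,v)_{L_{2}(\Omega)}
=([q_{0}]\A(\bxi;a,\mu)[q_{0}^{-1/2}]v,[q_{0}^{-1/2}]v)_{L_{2}(\Omega)}
\end{equation*}
holds by direct computation. Applying \eqref{e1.30} with $u=[q_{0}^{-1/2}]v$ and using the upper bound $q_{0}\le q_{+}$ to replace $\|[q_{0}^{-1/2}]v\|^{2}$ by $q_{+}^{-1}\|v\|^{2}$ gives
\begin{equation*}
\operatorname{Re}([q_{0}^{1/2}]\A(\bxi;a,\mu)[q_{0}^{-1/2}]v,v)
\ge \mu_{-}q_{-}q_{+}^{-1}C(a)|\bxi|^{2}\,\|v\|^{2}_{L_{2}(\Omega)},\quad v\in L_{2}(\Omega).
\end{equation*}
This is precisely the accretivity of $[q_{0}^{1/2}]\A(\bxi;a,\mu)[q_{0}^{-1/2}]-\mu_{-}q_{-}q_{+}^{-1}C(a)|\bxi|^{2}I$.

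Next, for any $z\in\mathbb{C}$ with $\operatorname{Re} z<\mu_{-}q_{-}q_{+}^{-1}C(a)|\bxi|^{2}$ the previous inequality combined with Cauchy--Schwarz yields
\begin{equation*}
\bigl\|\bigl([q_{0}^{1/2}]\A(\bxi;a,\mu)[q_{0}^{-1/2}]-zI\bigr)v\bigr\|
\ge \bigl(\mu_{-}q_{-}q_{+}^{-1}C(a)|\bxi|^{2}-\operatorname{Re} z\bigr)\|v\|,
\end{equation*}
and the same bound applies to the adjoint since $\operatorname{Re}(T^{*}v,v)=\operatorname{Re}(Tv,v)$. Hence the shifted operator is boundedly invertible with
\begin{equation*}
\bigl\|\bigl([q_{0}^{1/2}]\A(\bxi;a,\mu)[q_{0}^{-1/2}]-zI\bigr)^{-1}\bigr\|
\le \bigl(\mu_{-}q_{-}q_{+}^{-1}C(a)|\bxi|^{2}-\operatorname{Re} z\bigr)^{-1},
\end{equation*}
which pins the spectrum of $[q_{0}^{1/2}]\A(\bxi;a,\mu)[q_{0}^{-1/2}]$, and consequently of $\A(\bxi;a,\mu)$, to the half-plane $\{\operatorname{Re} z\geq \mu_{-}q_{-}q_{+}^{-1}C(a)|\bxi|^{2}\}$.

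Finally, I would transfer the resolvent bound back to $\A(\bxi;a,\mu)$ itself through the similarity relation
\begin{equation*}
(\A(\bxi;a,\mu)-zI)^{-1}
=[q_{0}^{-1/2}]\bigl([q_{0}^{1/2}]\A(\bxi;a,\mu)[q_{0}^{-1/2}]-zI\bigr)^{-1}[q_{0}^{1/2}],
\end{equation*}
estimating the flanking multiplication operators by $\|[q_{0}^{-1/2}]\|\le q_{-}^{-1/2}$ and $\|[q_{0}^{1/2}]\|\le q_{+}^{1/2}$ from \eqref{d1.0.0}. This yields exactly \eqref{e1.30.1}. There is no real obstacle here; the only point that requires mild care is keeping track of the direction of the inequalities when passing from $q_{0}$ to $q_{0}^{-1}$, so that the factor $q_{+}^{-1}$ (and not $q_{-}^{-1}$) appears inside the accretivity constant while $q_{-}^{-1/2}q_{+}^{1/2}$ appears as the overall prefactor.
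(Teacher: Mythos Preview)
Your proposal is correct and follows essentially the same route as the paper: both start from \eqref{e1.30}, substitute $u=[q_{0}^{-1/2}]v$ and use $q_0\le q_+$ to obtain accretivity of the shifted similar operator, then deduce the resolvent bound and transfer it back via the similarity $(\A(\bxi)-zI)^{-1}=[q_0^{-1/2}](\cdots)^{-1}[q_0^{1/2}]$ with $\|[q_0^{-1/2}]\|\le q_-^{-1/2}$, $\|[q_0^{1/2}]\|\le q_+^{1/2}$. Your write-up is in fact a bit more explicit than the paper's (you spell out the adjoint argument for invertibility), but there is no substantive difference.
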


\begin{proof}
By virtue of \eqref{e1.30} we have
\begin{multline*}
\operatorname{Re} (q_{0}^{1/2}\A(\bxi; a,\mu)q_{0}^{-1/2}u,u)\geqslant \mu_{-}q_{-}C(a)|\bxi|^{2}\|q_{0}^{-1/2}u\|^{2}
\\
\ge\mu_{-}q_{-}q_{+}^{-1}C(a)|\bxi|^{2}\|u\|^{2},\ \ u\in L_{2}(\Omega),\ \ \bxi\in\widetilde\Omega.
\end{multline*}
Therefore, the operator
$[q_{0}^{1/2}] \A(\bxi; a,\mu) [q_{0}^{-1/2}] - \mu_{-}q_{-}q_{+}^{-1}C(a)|\bxi|^{2}I$
is accretive,  for any $z$ such that  $\operatorname{Re} z<0$ the operator
$[q_{0}^{1/2}] \A(\bxi; a,\mu) [q_{0}^{-1/2}] - \mu_{-}q_{-}q_{+}^{-1}C(a)|\bxi|^{2}I-zI$
is invertible, and the inverse operator satisfies the estimate
\begin{equation}\label{e1.30.2}
\bigl\| \bigl( [q_{0}^{1/2}] \A(\bxi; a,\mu) [q_{0}^{-1/2}] - \mu_{-}q_{-}q_{+}^{-1}C(a)|\bxi|^{2}I-zI \bigr)^{-1} \bigr\|\le
| \operatorname{Re} z|^{-1}.
\end{equation}
Now all the statements of Corollary follow from \eqref{e1.30.2}.
\end{proof}

We will also need the estimate
\begin{equation}
\label{e1.32}
\|\A(\bxi;a,\mu)-\A(\boldsymbol{\eta};a,\mu)\|\le\mu_{+} M_{1}(a)|\bxi- \boldsymbol{\eta} |,\ \
\bxi, \boldsymbol{\eta} \in\widetilde\Omega,
\end{equation}
which is valid under conditions \eqref{h1.1}--\eqref{h1.3} and $M_1(a) < \infty$.
This estimate can be deduced  from \eqref{A_xi}--\eqref{a_tilde} by means of the Schur lemma.

\section{Threshold characteristics of convolution type operators   \\   near zero}

\subsection{The spectrum of operator  $\A(\bxi;a,\mu)$ near zero}
According to Proposition  \ref{YadroA}, under conditions (\ref{h1.1})--(\ref{h1.3}), the point
$\lambda_{0}=0$ is an isolated eigenvalue of the operator $\A(\mathbf{0};a,\mu)$, and the corresponding
Riesz projector has rank $1$. For brevity we denote by $d_{0}:=d_{0}(a,\mu)$ the distance from  $\lambda_{0}$ to the remaining part of the spectrum of  $\A(\mathbf{0};a,\mu)$.
The resolvent of the operator $\A(\mathbf{0};a,\mu)$ is holomorphic in the disk $B_{d_{0}}(0)$ with the point $0$  punctured. Therefore, the quantity
\begin{equation}
\label{const=K}
K:=K(d_{0},a,\mu):=\max_{\frac{d_{0}}{3}\le|\zeta|\le\frac{2d_{0}}{3}}\|(\A(\mathbf{0};a,\mu)-\zeta I)^{-1}\|
\end{equation}
is finite. Denote
\begin{equation}
\label{delta0}
\delta_{0}(a,\mu):= \min\left\{ \frac{\pi}{2},\frac{1}{(d_{0} K^{2}+3K ) M_{1}(a) \mu_{+}} \right\}.
\end{equation}
Clearly the ball $\{\bxi\in\mathbb R^d\,:\,|\bxi| \leqslant \delta_{0}(a,\mu)\}$ is a subset of $\widetilde\Omega$.

The following statement is a straightforward  consequence of estimate (\ref{e1.32}), Proposition \ref{rizprp3} and  Remark \ref{rizremark}:
\begin{proposition}
\label{prop2.1}
Let conditions \eqref{h1.1}--\eqref{h1.3}  be fulfilled, and assume that $M_1(a) < \infty$. Then for all $\bxi$ such that $|\bxi|\le\delta_{0}(a,\mu)$
the spectrum of the operator $\A(\bxi) = \A(\bxi;a,\mu)$ in the disc $B_{d_0/3}(0)$ consists of one simple eigenvalue,
i.e. the Riesz projector corresponding to the spectrum of $\A(\bxi)$ in the disc $B_{d_0/3}(0)$ has rank $1$.
The set  $\{\zeta\in\mathbb{C}:\frac{d_{0}}{3}\le|\zeta|\le\frac{2d_{0}}{3}\}$ does not intersect with  $\sigma(\A(\bxi))$.
\end{proposition}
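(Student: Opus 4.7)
The plan is to apply the general stability result for isolated eigenvalues (Proposition~\ref{rizprp3}) to the family $\{\A(\bxi;a,\mu)\}_{\bxi\in\widetilde\Omega}$, treating $\A(\bxi;a,\mu)$ as a small perturbation of $\A(\mathbf{0};a,\mu)$. The size of the perturbation is controlled by the Lipschitz bound \eqref{e1.32}, and the specific constant $\delta_{0}(a,\mu)$ in \eqref{delta0} is engineered precisely so that the perturbation is small enough to invoke the general result.

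First I would collect the input data. By Proposition~\ref{YadroA}, $\lambda_{0}=0$ is an isolated eigenvalue of $\A(\mathbf{0};a,\mu)$ with one-dimensional Riesz projector, and by definition $d_{0}=d_{0}(a,\mu)$ is the distance from $\lambda_{0}$ to the rest of $\sigma(\A(\mathbf{0};a,\mu))$. Hence the annulus $\{\zeta\in\mathbb{C}:d_{0}/3\le|\zeta|\le 2d_{0}/3\}$ lies in the resolvent set of $\A(\mathbf{0};a,\mu)$, and on this annulus $\|(\A(\mathbf{0};a,\mu)-\zeta I)^{-1}\|\le K$ by \eqref{const=K}. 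The bound \eqref{e1.32} gives $\|\A(\bxi;a,\mu)-\A(\mathbf{0};a,\mu)\|\le\mu_{+}M_{1}(a)|\bxi|$.

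Both assertions now follow by a Neumann series argument. For $|\bxi|\le\delta_{0}(a,\mu)$, the choice \eqref{delta0} guarantees
$$
K\mu_{+}M_{1}(a)|\bxi|\le\frac{K}{d_{0}K^{2}+3K}<1,
$$
so the factorization
$$
\A(\bxi;a,\mu)-\zeta I=(\A(\mathbf{0};a,\mu)-\zeta I)\bigl[I+(\A(\mathbf{0};a,\mu)-\zeta I)^{-1}(\A(\bxi;a,\mu)-\A(\mathbf{0};a,\mu))\bigr]
$$
is invertible for every $\zeta$ in the annulus, with a uniform bound on $\|(\A(\bxi;a,\mu)-\zeta I)^{-1}\|$. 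This yields the second assertion. For the first, define
$$
F(\bxi):=-\frac{1}{2\pi i}\oint_{|\zeta|=d_{0}/2}(\A(\bxi;a,\mu)-\zeta I)^{-1}\,d\zeta,
$$
which by the second assertion coincides with the Riesz projector of $\A(\bxi;a,\mu)$ attached to the spectrum inside $B_{d_{0}/3}(0)$. The second resolvent identity together with the contour length $\pi d_{0}$ and the uniform resolvent bounds on the annulus gives $\|F(\bxi)-F(\mathbf{0})\|<1$ for $|\bxi|\le\delta_{0}(a,\mu)$. The standard fact that two bounded projectors at operator-norm distance less than $1$ have equal rank then forces $\rank F(\bxi)=\rank F(\mathbf{0})=1$.

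The only obstacle is bookkeeping: one has to check that the specific expression $1/((d_{0}K^{2}+3K)\mu_{+}M_{1}(a))$ appearing in \eqref{delta0} simultaneously ensures convergence of the Neumann series on the annulus and keeps the resulting bound on $\|F(\bxi)-F(\mathbf{0})\|$ strictly below $1$. All conceptual content is encapsulated in Proposition~\ref{rizprp3} and Remark~\ref{rizremark}, so the verification here reduces to tracking constants through the contour integral.
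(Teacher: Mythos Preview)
Your proposal is correct and follows exactly the paper's approach: the paper's proof is the single sentence that Proposition~\ref{prop2.1} is a straightforward consequence of estimate~\eqref{e1.32}, Proposition~\ref{rizprp3}, and Remark~\ref{rizremark}. You have simply unpacked that citation---verifying that $|\bxi|\le\delta_{0}(a,\mu)$ forces $\|\A(\bxi)-\A(\mathbf{0})\|\le\mu_{+}M_{1}(a)\delta_{0}(a,\mu)\le(d_{0}K^{2}+3K)^{-1}<3(d_{0}K^{2}+3K)^{-1}$, which is exactly the hypothesis of Proposition~\ref{rizprp3} in the form given by Remark~\ref{rizremark}---and then sketched the Neumann-series and Riesz-projector arguments that Proposition~\ref{rizprp3} encapsulates.
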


Under conditions \eqref{h1.1}--\eqref{h1.3}, the regularity of the operator-function $\A(\cdot;a,\mu)$ depends on how many moments of the function $a(\cdot)$ are finite. If $M_k(a) < \infty$, then $\A(\cdot;a,\mu)$ is $k$ times  continuously differentiable in the norm of $\mathcal{B}(L_{2}(\Omega))$, and the derivatives of  this operator-function are given by
%
\begin{equation}
\label{e2.1}
\begin{aligned}
\partial^{\alpha}\A(\bxi; a,\mu)u(\x) &=\intop_{\Omega}\widetilde
a_{\alpha}(\bxi,\x-\y)\mu(\x,\y)u(\y)\, d\y,\ \ \x\in\Omega,\ \ u\in L_{2}(\Omega);
\\
\widetilde a_{\alpha}(\bxi,\z) &=(-1)(-i)^{|\alpha|}\sum_{\n\in\Z^d}(\z+\n)^{\alpha}a(\z+\n)e^{-i \langle \bxi, \z+\n \rangle},\
\ \alpha\in\Z^d_{+},\ \ |\alpha|\le k.
\end{aligned}
\end{equation}

\begin{lemma}
Let conditions \eqref{h1.1}--\eqref{h1.3} hold true. Then

\noindent
$1^\circ$. If $M_1(a) < \infty$, then
\begin{equation}
\label{2.3}
\| \A(\bxi) - \A(\mathbf{0})\| \le \mu_+ M_1(a) |\bxi|,\quad |\bxi| \le \delta_0(a,\mu).
\end{equation}

\noindent
$2^\circ$. If $M_2(a) < \infty$, then
\begin{align}
\label{2.4}
\A(\bxi)&=\A(\mathbf{0})+ [\Delta_1\A](\bxi ) +
\mathbb{K}_1(\bxi),\quad  [\Delta_1\A](\bxi ) :=\sum_{j=1}^{d}\partial_{j}\A(\mathbf{0})\xi_{j},
\\
\label{e2.3}
\left\| [\Delta_1\A](\bxi ) \right\| &\le \mu_{+} M_{1}(a) |\bxi|,\ \ |\bxi| \le \delta_0(a,\mu),\\
\label{2.5}
\| \mathbb{K}_1(\bxi) \| &\le \frac{1}{2} \mu_+ M_2(a) |\bxi|^2,\quad |\bxi| \le \delta_0(a,\mu).
\end{align}

\noindent
$3^\circ$. If $M_3(a) < \infty$, then
\begin{align}
\label{2.6}
\A(\bxi)&=\A(\mathbf{0})+ [\Delta_1\A](\bxi ) + [\Delta_2\A](\bxi ) + \mathbb{K}_2(\bxi),\quad [\Delta_2\A](\bxi ) := \frac{1}{2}\sum_{k,l=1}^{d}\partial_{k}\partial_{l}\A(\mathbf{0})\xi_{k}\xi_{l},
\\
\label{e2.4}
\left\| [\Delta_2\A](\bxi ) \right\|&
\le \frac{1}{2}\mu_{+} M_{2}(a) |\bxi|^2,\ \ |\bxi| \le \delta_0(a,\mu),
\\
\label{2.7}
\| \mathbb{K}_2(\bxi) \| &\le \frac{1}{6} \mu_+ M_3(a) |\bxi|^3,\quad |\bxi| \le \delta_0(a,\mu).
\end{align}
\end{lemma}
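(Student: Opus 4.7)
My plan is to derive all three parts from a single observation: since $\A(\bxi) = [p] - \B(\bxi)$ and the multiplication by $p$ does not depend on $\bxi$, the operator-valued function $\bxi \mapsto \A(\bxi)$ inherits all its $\bxi$-regularity from $\B(\bxi)$, and formula \eqref{e2.1} gives explicit integral-kernel representations for every derivative. So the problem reduces to estimating the norms of integral operators in $L_2(\Omega)$ with kernels of the form $\widetilde{a}_{\alpha}(\bxi,\x-\y)\mu(\x,\y)$ (or suitable linear combinations thereof for the Taylor remainder).

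The main tool throughout is the Schur lemma. The key identity behind every estimate is the unfolding computation
\begin{equation*}
\intop_{\Omega} \sum_{\n\in\Z^d} |\z+\n|^{k}\, a(\z+\n)\, d\z \;=\; \intop_{\R^d} |\w|^{k}\, a(\w)\, d\w \;=\; M_k(a),
\end{equation*}
applied to the $L_1$-norm in $\z$ of the periodic kernel, together with the trivial bound $|\mu|\le \mu_+$. Translation invariance of the kernel $\widetilde a(\cdot,\x-\y)$ in the difference $\x-\y$ makes both Schur integrals (over $\x$ for fixed $\y$ and vice versa) equal, so I get $\|T\|\le \mu_+ \cdot \|\widetilde a_\alpha(\bxi,\cdot)\|_{L_1(\Omega)}$ for the associated integral operator $T$. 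Bounding $|\widetilde a_\alpha(\bxi,\z)|\le \sum_\n |\z+\n|^{|\alpha|} a(\z+\n)$ and using the unfolding identity yields $\|\partial^\alpha \A(\bxi)\| \le \mu_+ M_{|\alpha|}(a)$ for each $|\alpha|\le k$ under the assumption $M_k(a)<\infty$. Applying the same scheme to the linear combination $\sum_j \xi_j\,\partial_j\A(\mathbf 0)$ gives \eqref{e2.3}, since $\sum_\n |\langle\bxi,\z+\n\rangle|\,a(\z+\n)\le |\bxi|\sum_\n |\z+\n|a(\z+\n)$; analogously $\sum_\n |\langle\bxi,\z+\n\rangle|^2 a(\z+\n)\le|\bxi|^2 \sum_\n |\z+\n|^2 a(\z+\n)$ gives \eqref{e2.4}.

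For part $1^\circ$, I would avoid Taylor altogether: write $\widetilde a(\bxi,\z)-\widetilde a(\mathbf 0,\z)=\sum_\n a(\z+\n)(e^{-i\langle\bxi,\z+\n\rangle}-1)$, use $|e^{i t}-1|\le |t|$, and apply the Schur bound as above to get \eqref{2.3} from $M_1(a)<\infty$. For parts $2^\circ$ and $3^\circ$, I use the Taylor remainder in integral form at the level of the operator-valued function:
\begin{equation*}
\mathbb K_1(\bxi) = \intop_0^1 (1-t)\sum_{j,k=1}^d \xi_j\xi_k\, \partial_j\partial_k \A(t\bxi)\, dt,
\qquad
\mathbb K_2(\bxi) = \tfrac12 \intop_0^1 (1-t)^2 \sum_{j,k,l=1}^d \xi_j\xi_k\xi_l\, \partial_j\partial_k\partial_l \A(t\bxi)\, dt.
\end{equation*}
These make sense because $\partial^\alpha \A(\cdot)$ is continuous in $\mathcal B(L_2(\Omega))$ when $M_{|\alpha|}(a)<\infty$, by \eqref{e2.1} combined with the dominated convergence theorem (the majorant $\sum_\n |\z+\n|^{|\alpha|}a(\z+\n)$ is in $L_1(\Omega)$). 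Then I take norms inside the integral; for the quadratic (resp.\ cubic) form of the Hessian (resp.\ third differential) the Schur bound produces a factor $|\bxi|^2 M_2(a)$ (resp.\ $|\bxi|^3 M_3(a)$) times $\mu_+$, exactly as in the treatment of $[\Delta_1\A]$ and $[\Delta_2\A]$. Evaluating the scalar factors $\int_0^1(1-t)\,dt = 1/2$ and $\int_0^1 \tfrac12(1-t)^2\,dt = 1/6$ yields \eqref{2.5} and \eqref{2.7}.

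The only mildly delicate point is making the integral Taylor remainder rigorous in the operator-norm topology; but since $\partial^\alpha \A(\cdot)$ is even continuous (as noted above) and the scalar function $t\mapsto \partial^\alpha \A(t\bxi)u$ is $C^{k-|\alpha|}$ for any fixed $u\in L_2(\Omega)$ with the expected derivatives, the Bochner-integral form of Taylor's formula applies directly. Everything else is a one-line application of the Schur lemma.
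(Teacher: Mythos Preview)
Your proposal is correct and essentially coincides with the intended argument. The paper itself does not give a proof here: it simply cites \cite[Lemma~2.2]{PSlSuZh1} and remarks that the symmetry assumptions imposed there play no role, so the proof carries over verbatim. Your write-up (Schur lemma applied to the explicit kernels in \eqref{e2.1}, the unfolding identity $\int_\Omega\sum_{\n}|\z+\n|^k a(\z+\n)\,d\z=M_k(a)$, and the integral form of the Taylor remainder) is exactly the standard mechanism behind that reference, and all constants come out as stated. Note that your estimates in fact hold for every $\bxi\in\widetilde\Omega$, not just $|\bxi|\le\delta_0(a,\mu)$; the restriction in the statement is only what is used downstream.
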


Under additional symmetry assumptions $\mu(\x,\y)=\mu(\y,\x)$, $\x,\y\in\mathbb{R}^{d}$ and $a(\z)=a(-\z)$, $\z\in\mathbb{R}^{d}$, a similar result was  obtained in \cite[Lemma 2.2]{PSlSuZh1}. However, the symmetry conditions
were not used in the proof, and thus the proof remains valid in the non-symmetric case studied here.

\subsection{Threshold approximations}
Denote by $F(\bxi)$ the Riesz projector of the operator $\A(\bxi) = \A(\bxi;a,\mu)$ that corresponds to the disc $\{z\in\mathbb{C}:|z|\le d_{0}/3\}$.
We recall that the symbol $P$ stands for the Riesz projector of the operator $\A(\mathbf{0};a,\mu)$ that corresponds to the eigenvalue
$\lambda_{0}=0$. Then we have $P = (\cdot, q_{0})\1_\Omega$.
Denote by $\Gamma$  the circle  $\{z\in\mathbb{C}:|z|= d_{0}/2\}$ which encloses the disc $\{z\in\mathbb{C}:|z|\le d_{0}/3\}$
equidistantly.
By the Riesz formula, $F(\bxi)$ and  $\A(\bxi) F(\bxi)$ admit the representations
\begin{align}
\label{e2.6}
F(\bxi)  &= - \frac{1}{2\pi i}\ointop_{\Gamma}(\A(\bxi)-\zeta I)^{-1}\, d\zeta,\ \ |\bxi|\le\delta_{0}(a,\mu),
\\
\label{e2.6a}
\A(\bxi) F(\bxi) &= - \frac{1}{2\pi i}\ointop_{\Gamma}(\A(\bxi)-\zeta I)^{-1}\zeta\,d\zeta,\ \ |\bxi|\le\delta_{0}(a,\mu);
\end{align}
here the integration along the contour is performed in a counterclockwise direction.
Our goal is to approximate the operator $F(\bxi)$ for small $|\bxi|$ with accuracy $O(|\bxi|)$,
and the operator $\A(\bxi)F(\bxi)$ with accuracy  $O(|\bxi|^3)$.
In the symmetric case such approximations were constructed in the work \cite{PSlSuZh}.
We essentially use the technique developed in this work; to construct the desired approximations, we integrate
the resolvent of   $\A(\bxi)$ along a proper contour (this method is called the "third method" \ in  \cite[\S~4]{PSlSuZh}).
\begin{proposition}
\label{prop2.3}
Let conditions \eqref{h1.1}--\eqref{h1.3} be satisfied, and assume that $M_1(a) < \infty$. Then
\begin{equation}
\label{F-P}
\| F(\bxi) - P \| \le C_1(a,\mu) |\bxi|, \quad  |\bxi| \le \delta_0(a,\mu);
\end{equation}
the constant $C_1(a,\mu)$ is defined in   \eqref{C1} below, it depends on $\mu_+$,  $M_1(a)$, $d_{0}$ and $K$.
\end{proposition}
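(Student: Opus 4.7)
The plan is to use the resolvent representations \eqref{e2.6} for $F(\bxi)$ and the analogous formula $P = -\frac{1}{2\pi i}\oint_{\Gamma}(\A(\mathbf{0})-\zeta I)^{-1}d\zeta$ (which is valid since $\Gamma$ encircles $\lambda_0 = 0$ and lies in the annulus $\{d_0/3 \le |\zeta| \le 2d_0/3\}$ that is free of the remaining spectrum of $\A(\mathbf{0})$). I would then apply the second resolvent identity
\[
(\A(\bxi)-\zeta I)^{-1}-(\A(\mathbf{0})-\zeta I)^{-1}=-(\A(\bxi)-\zeta I)^{-1}\bigl(\A(\bxi)-\A(\mathbf{0})\bigr)(\A(\mathbf{0})-\zeta I)^{-1}
\]
to rewrite $F(\bxi)-P$ as a contour integral of this product, reducing the task to uniform operator-norm bounds on $\Gamma$.

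The bound on $(\A(\mathbf{0})-\zeta I)^{-1}$ is immediate from the definition \eqref{const=K}: for $\zeta\in\Gamma$ one has $|\zeta|=d_0/2\in[d_0/3,2d_0/3]$, so $\|(\A(\mathbf{0})-\zeta I)^{-1}\|\le K$. For the perturbation, estimate \eqref{2.3} gives $\|\A(\bxi)-\A(\mathbf{0})\|\le \mu_+ M_1(a)|\bxi|$. The central step is to bound $\|(\A(\bxi)-\zeta I)^{-1}\|$ on $\Gamma$ uniformly in $\bxi$ by writing
\[
\A(\bxi)-\zeta I=(\A(\mathbf{0})-\zeta I)\bigl(I+(\A(\mathbf{0})-\zeta I)^{-1}(\A(\bxi)-\A(\mathbf{0}))\bigr)
\]
and using the Neumann series. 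Convergence requires $K\mu_+M_1(a)|\bxi|<1$, and here is where the precise choice of $\delta_0$ in \eqref{delta0} pays off: the condition $|\bxi|\le\delta_0(a,\mu)\le [(d_0K^2+3K)M_1(a)\mu_+]^{-1}$ yields $K\mu_+M_1(a)|\bxi|\le (d_0K+3)^{-1}\le 1/3$, hence the Neumann series converges and
\[
\bigl\|(\A(\bxi)-\zeta I)^{-1}\bigr\|\le \frac{K}{1-K\mu_+M_1(a)\delta_0(a,\mu)}\le \frac{3K}{2},\qquad \zeta\in\Gamma,\ |\bxi|\le\delta_0(a,\mu).
\]
In particular this reconfirms (compatibly with Proposition \ref{prop2.1}) that $\Gamma\cap\sigma(\A(\bxi))=\varnothing$, so the Riesz formula \eqref{e2.6} is legitimate on the same contour.

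Combining these ingredients and bounding the contour integral by (length of $\Gamma$)$\times$(maximum of integrand)/$2\pi$, with length$(\Gamma)=\pi d_0$, yields
\[
\|F(\bxi)-P\|\le \frac{d_0}{2}\cdot\frac{3K}{2}\cdot\mu_+M_1(a)|\bxi|\cdot K=\frac{3d_0K^2\mu_+M_1(a)}{4}|\bxi|,
\]
which is estimate \eqref{F-P} with an explicit value of $C_1(a,\mu)$ in terms of $d_0$, $K$, $\mu_+$ and $M_1(a)$, as promised. There is no real obstacle here: once the correct contour and the perturbation estimate \eqref{2.3} are in hand, the argument is a standard Riesz projector perturbation; the only point that needs care is verifying that the numeric constant in $\delta_0$ is sharp enough to keep the Neumann series under control uniformly on $\Gamma$, and this was built into the definition \eqref{delta0}.
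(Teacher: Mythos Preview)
Your proof is correct and follows essentially the same route as the paper: the Riesz contour representation for $F(\bxi)-P$, the resolvent identity, the Neumann series bound $\|(\A(\bxi)-\zeta I)^{-1}\|\le 3K/2$ on $\Gamma$ via the choice of $\delta_0$, and the final constant $C_1(a,\mu)=\tfrac{3}{4}K^2 d_0\mu_+M_1(a)$ all coincide with the paper's argument and with \eqref{C1}.
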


\begin{proof}
For the sake of brevity we introduce the notation
\begin{align*}
R(\bxi,\zeta)&:=(\A(\bxi)-\zeta I)^{-1},\ \
|\bxi|\le\delta_{0}(a,\mu),\ \ \zeta\in\Gamma;
\\
R_{0}(\zeta)&:=R(\mathbf{0},\zeta),\ \ \zeta\in\Gamma;
\\
\Delta\A(\bxi)&:=\A(\bxi)-\A(\mathbf{0}),\ \
|\bxi|\le\delta_{0}(a,\mu).
\end{align*}
Due to the Riesz formula in \eqref{e2.6}, the difference  $F(\bxi)-P$ admits the representation
\begin{equation}\label{e2.30}
F(\bxi)-P= - \frac{1}{2\pi i}\ointop_{\Gamma} \left( R(\bxi,\zeta) - R_0(\zeta)\right) \, d\zeta,
\ \ |\bxi|\le\delta_{0}(a,\mu).
\end{equation}
In what follows we systematically use the resolvent identity
\begin{equation}\label{e2.31}
R(\bxi,\zeta) = R_0(\zeta)
- R(\bxi,\zeta) \Delta\A(\bxi) R_0(\zeta), \quad
|\bxi|\le\delta_{0}(a,\mu),\ \ \zeta\in\Gamma,
\end{equation}
and its iterations. Notice that the length of the contour $\Gamma$ is equal to $\pi d_{0}$.
Let us show that  the resolvents  $R_0 (\zeta)$ and $R(\bxi,\zeta)$ can be estimated on the contour
$\Gamma$ as follows:
\begin{equation}\label{e2.32}
\| R_0 (\zeta)\|\leqslant K,\ \
\| R(\bxi,\zeta)\|\leqslant 3K /2, \ \ |\bxi|\leqslant\delta_{0}(a,\mu),\quad  \zeta \in \Gamma,
\end{equation}
where the constant $K$ is defined in  \eqref{const=K}.
The former inequality is an immediate consequence of \eqref{const=K}. In order to derive the latter one
one should take into account estimate  \eqref{2.3},  relation  \eqref{delta0} and the fact that
the resolvent $R(\bxi,\zeta)$ is represented on the contour $\Gamma$ by the convergent series
$$
R(\bxi,\zeta)=R_0 (\zeta)\sum_{n=0}^{\infty}\left(-\Delta\A(\bxi)R_0 (\zeta)\right)^{n},\ \ |\bxi|\le\delta_{0}(a,\mu),\quad \zeta \in \Gamma.
$$
Finally, combining \eqref{2.3}  and \eqref{e2.30}--\eqref{e2.32} yields the desired estimate \eqref{F-P}
with the constant
\begin{equation}
\label{C1}
C_{1}(a,\mu):= \frac{3}{4} K^{2}d_{0}\mu_{+}M_{1}(a).
\end{equation}
\end{proof}

We turn to threshold approximations for the operator  $\A(\bxi)F(\bxi)$. Here the technique differs essentially from
that exploited in the self-adjoint case.

\begin{proposition}
\label{prop2.6}
Let conditions \eqref{h1.1}--\eqref{h1.3} be fulfilled, and assume that $M_3(a) < \infty$. Then
the operator-function $ \A(\bxi)F(\bxi)$ admits the expansion
\begin{equation}
\label{AF = G2}
 \A(\bxi)F(\bxi) =  [G]_{1}(\bxi)+[G]_2(\bxi) + \Psi(\bxi), \quad [G]_{1}(\bxi):=\sum_{j=1}^{d}G_{j}\xi_{j}, \quad [G]_2(\bxi) := \frac{1}{2}\sum_{k,l=1}^d G_{kl} \xi_k \xi_l,
\end{equation}
and the remainder $ \Psi(\bxi)$ satisfies the estimate
\begin{equation}
\label{AF-G}
\left\| \Psi (\bxi)  \right\| \le C_2(a,\mu) |\bxi|^3, \quad  |\bxi| \le \delta_0(a,\mu).
\end{equation}
The operators $G_{j}$ are defined by
\begin{equation}\label{G_{j}=prop}
G_{j}=P \partial_{j}\mathbb{A}(\mathbf{0}) P,\ \ j=1,\dots,d,
\end{equation}
and the operators  $G_{kl}$ have the form
\begin{equation}
\label{G_kl=prop}
\begin{split}
G_{kl} =&  P \partial_{k} \partial_{l} \mathbb{A}(\mathbf{0}) P
- P \partial_{k}  \mathbb{A}(\mathbf{0}) R_{1}(0)  \partial_{l}  \mathbb{A}(\mathbf{0}) P
-  P \partial_{l}  \mathbb{A}(\mathbf{0}) R_{1}(0)  \partial_{k}  \mathbb{A}(\mathbf{0}) P
\\
&- P \partial_{k}  \mathbb{A}(\mathbf{0})P\partial_{l}  \mathbb{A}(\mathbf{0})R_{1}(0)
- R_{1}(0)\partial_{k}  \mathbb{A}(\mathbf{0})P\partial_{l}  \mathbb{A}(\mathbf{0})P
\\
&- P \partial_{l}  \mathbb{A}(\mathbf{0})P\partial_{k}  \mathbb{A}(\mathbf{0})R_{1}(0)
- R_{1}(0)\partial_{l}  \mathbb{A}(\mathbf{0})P\partial_{k}  \mathbb{A} (\mathbf{0}) P,\ \ k,l=1,\dots,d;
\end{split}
\end{equation}
here $R_{1}(0):=Q\mathbb{A}(\mathbf{0})^{-1}Q$, $Q=I-P$, and   $\mathbb{A}(\mathbf{0})^{-1}$
is treated as the operator inverse to  $ \mathbb{A}(\mathbf{0})
\vert_{QL_{2}(\Omega)} : {QL_{2}(\Omega)} \to {QL_{2}(\Omega)}$.
This operator is well-defined and bounded.
The constant $C_2(a,\mu)$ depends on $d_{0}$, $K$, $\mu_+$,  $M_1(a)$, $M_2(a)$ and $M_3(a)$; its value is specified
in formula \eqref{C3} below.
\end{proposition}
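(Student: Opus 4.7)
My approach follows the \emph{third method} of \cite{PSlSuZh}, based on the Riesz representation
\[
\A(\bxi)F(\bxi) \;=\; -\frac{1}{2\pi i}\oint_\Gamma \zeta\, R(\bxi,\zeta)\,d\zeta,\qquad R(\bxi,\zeta):=(\A(\bxi)-\zeta I)^{-1},
\]
combined with expansion of $R(\bxi,\zeta)$ in powers of the perturbation $\Delta\A(\bxi):=\A(\bxi)-\A(\mathbf{0})$. Iterating the resolvent identity $R=R_0-R_0\Delta\A\,R$ three times gives
\[
R(\bxi,\zeta)\;=\;R_0-R_0\Delta\A\,R_0+R_0\Delta\A\,R_0\Delta\A\,R_0-R_0(\Delta\A\,R_0)^3 R,
\]
into which I substitute $\Delta\A(\bxi)=[\Delta_1\A](\bxi)+[\Delta_2\A](\bxi)+\mathbb{K}_2(\bxi)$ from \eqref{2.6}. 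The terms of order at most two in $\bxi$ form the main part; the piece containing $\mathbb{K}_2$, all cross products of $[\Delta_1\A]$ with $[\Delta_2\A]$ or $\mathbb{K}_2$, and the triple tail $R_0(\Delta\A\,R_0)^3R$ are collected into a remainder $\widetilde\Psi(\bxi,\zeta)$. Using \eqref{e2.3}, \eqref{e2.4}, \eqref{2.7} together with the uniform bounds $\|R_0(\zeta)\|\le K$ and $\|R(\bxi,\zeta)\|\le \tfrac{3}{2}K$ on $\Gamma$ (cf.\ the proof of Proposition~\ref{prop2.3}), one obtains $\|\widetilde\Psi(\bxi,\zeta)\|\le C|\bxi|^3$ uniformly in $\zeta\in\Gamma$.

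To evaluate the contour integrals of the main part I use the spectral decomposition
\[
R_0(\zeta)\;=\;-\frac{1}{\zeta}P+R_1(\zeta),\qquad R_1(\zeta):=\bigl(\A(\mathbf{0})\big|_{QL_2(\Omega)}-\zeta I\bigr)^{-1}Q,
\]
in which $R_1(\zeta)$ extends holomorphically to a neighbourhood of the closed disc bounded by $\Gamma$, because $d_0$ is precisely the distance from $0$ to the rest of $\sigma(\A(\mathbf{0}))$; in particular $R_1(0)=Q\A(\mathbf{0})^{-1}Q$ is bounded. The zero-order integral equals $\A(\mathbf{0})P=0$ by $\A(\mathbf{0})\mathbf{1}_\Omega=0$. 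For the first-order term, substituting $R_0=-\zeta^{-1}P+R_1$ into $R_0[\Delta_1\A]R_0$ and multiplying by $\zeta$ isolates the simple pole $\zeta^{-1}P[\Delta_1\A]P$, while the pieces $P[\Delta_1\A]R_1$ and $R_1[\Delta_1\A]P$ remain analytic inside $\Gamma$; the residue theorem gives $P[\Delta_1\A](\bxi)P=[G]_1(\bxi)$. The same reasoning applied to $R_0[\Delta_2\A]R_0$ yields $P[\Delta_2\A](\bxi)P=\tfrac{1}{2}\sum_{k,l}P\partial_k\partial_l\A(\mathbf{0})P\,\xi_k\xi_l$.

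The essential new computation is the triple product $R_0[\Delta_1\A]R_0[\Delta_1\A]R_0$. Bilinear expansion with $R_0=-P/\zeta+R_1$ produces eight terms; counting factors of $R_1$, the $\zeta^{-2}PB_kPB_lP$ piece has vanishing residue after multiplication by $\zeta$, the terms with two or three factors of $R_1$ are analytic in the disc and contribute nothing, and only the three terms with exactly one factor of $R_1$ survive, evaluating at $\zeta=0$ to
\[
PB_kPB_l\,R_1(0)+PB_k\,R_1(0)\,B_lP+R_1(0)\,B_kPB_lP,\qquad B_j:=\partial_j\A(\mathbf{0}).
\]
Symmetrizing in $(k,l)$ (since $\xi_k\xi_l=\xi_l\xi_k$) and combining with the $[\Delta_2\A]$ contribution produces exactly the seven-term formula for $G_{kl}$ stated in the proposition. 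The bound \eqref{AF-G} then follows by integrating $\widetilde\Psi(\bxi,\zeta)$ over $\Gamma$, with $C_2(a,\mu)$ depending polynomially on $M_1(a)$, $M_2(a)$, $M_3(a)$, $\mu_+$, $d_0$ and $K$.

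The main obstacle will be the combinatorial bookkeeping in the triple-product expansion: because $\A(\mathbf{0})$ is non-self-adjoint, the operators $P$, $R_1(0)$ and the derivatives $\partial_j\A(\mathbf{0})$ generally do not commute, so the ordering of factors must be preserved throughout and the symmetrization in $(k,l)$ carried out \emph{after} the residue calculus, unlike in the self-adjoint setting where $P$ and $R_1(0)$ are self-adjoint projections. A secondary concern is the exhaustive enumeration of pieces making up $\widetilde\Psi$ and bounding each by $C|\bxi|^3$, which is routine given \eqref{e2.3}, \eqref{e2.4} and \eqref{2.7} but requires careful tracking of constants.
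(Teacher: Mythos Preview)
Your proposal is correct and follows essentially the same approach as the paper: Riesz formula for $\A(\bxi)F(\bxi)$, iterated resolvent identity, substitution of the Taylor expansion \eqref{2.6}, and residue computation via the splitting $R_0(\zeta)=-\zeta^{-1}P+R_1(\zeta)$, leading to exactly the seven-term formula \eqref{G_kl=prop}. The only cosmetic differences are that the paper iterates the identity in the form $R=R_0-R\,\Delta\A\,R_0$ (so the full resolvent sits on the left of the tail term $Z_3$ rather than the right), and organizes the remainder as $Z_3+Z_4$ with the explicit constant \eqref{C3}; note also that your tail should read $-(R_0\Delta\A)^3R$ rather than $-R_0(\Delta\A\,R_0)^3R$.
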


\begin{proof}
Iterating the resolvent identity in \eqref{e2.31} two times we obtain
\begin{equation}
\label{res_iden3}
\begin{aligned}
R(\boldsymbol{\xi},\zeta) =& \,R_0(\zeta)
- R_0(\zeta) \Delta\A(\boldsymbol{\xi}) R_0(\zeta) + R_0(\zeta) \Delta\A(\boldsymbol{\xi}) R_0(\zeta) \Delta\A(\boldsymbol{\xi}) R_0(\zeta)
+ Z_3(\boldsymbol{\xi},\zeta),
\\
Z_3(\boldsymbol{\xi},\zeta) :=&
- R(\boldsymbol{\xi},\zeta) \Delta\A(\boldsymbol{\xi}) R_0(\zeta) \Delta\A(\boldsymbol{\xi}) R_0(\zeta) \Delta\A(\boldsymbol{\xi}) R_0(\zeta), \quad
| \boldsymbol{\xi} |\le\delta_{0}(a,\mu),\ \ \zeta\in\Gamma.
\end{aligned}
\end{equation}
By \eqref{2.3} and \eqref{e2.32}, $Z_3( \boldsymbol{\xi},\zeta)$ satisfies the estimate
\begin{equation}
\label{res_est4a}
\| Z_3( \boldsymbol{\xi},\zeta) \| \le \frac{3}{2}K^{4} \mu^3_+ M_1(a)^3 |\bxi|^3,
\quad
| \boldsymbol{\xi} | \leqslant \delta_{0}(a,\mu),\ \ \zeta\in\Gamma.
\end{equation}
Next,  substituting   \eqref{2.6}  into the second term on the right-hand side of \eqref{res_iden3}, and  \eqref{2.4} -- into the third term, we arrive at the relations
\begin{equation}
\label{res_iden4}
\begin{aligned}
R(\bxi,\zeta) =& \,R_0(\zeta)
- R_0(\zeta) \left( [\Delta_1\A](\bxi ) + [\Delta_2\A](\bxi )  \right)  R_0(\zeta)
\\
+& R_0(\zeta)   [\Delta_1\A](\bxi )   R_0(\zeta)
 [\Delta_1\A](\bxi )  R_0(\zeta)
+ Z_3(\bxi,\zeta) + Z_4(\boldsymbol{\xi},\zeta),
\\
Z_4(\bxi,\zeta) :=&
- R_0(\zeta) \mathbb{K}_2(\bxi)  R_0(\zeta) + R_0(\zeta) \mathbb{K}_1(\bxi)  R_0(\zeta) \Delta\A(\bxi) R_0(\zeta)
\\
+& R_0(\zeta) [\Delta_1\A](\bxi )   R_0(\zeta) \mathbb{K}_1(\bxi)  R_0(\zeta)
, \quad
|\bxi|\le\delta_{0}(a,\mu),\ \ \zeta\in\Gamma.
\end{aligned}
\end{equation}
Due to  \eqref{2.3},  \eqref{e2.3},  \eqref{2.5}, \eqref{2.7} and \eqref{e2.32} the term $Z_4(\bxi,\zeta)$ satisfies the estimate
 \begin{equation}
\label{res_est5}
\| Z_4(\bxi,\zeta) \| \le \left(\frac{1}{6}\mu_+ K^{2}M_3(a)  +  \mu_+^2 K^{3}M_1(a)M_2(a) \right)|\bxi|^3 ,
\quad
|\bxi|\le\delta_{0}(a,\mu),\ \ \zeta\in\Gamma.
\end{equation}
Using Riesz formula \eqref{e2.6a} and representation \eqref{res_iden4} we conclude that
\begin{equation}
\label{AF=}
\A(\bxi)F(\bxi) = G_0  + \sum_{j=1}^d G_j \xi_j + \frac{1}{2} \sum_{k,l=1}^d G_{kl} \xi_k \xi_l + \Psi(\bxi), \quad
|\bxi|\le\delta_{0}(a,\mu),
\end{equation}
with
\begin{align}
\label{G0=}
G_0 &= - \frac{1}{2\pi i}\oint_{\Gamma} R_0(\zeta) \zeta  \, d\zeta,
\\
\label{Gj=}
G_j &=  \frac{1}{2\pi i}\oint_{\Gamma} R_0(\zeta)\partial_{j}\A(\mathbf{0}) R_0(\zeta) \zeta  \, d\zeta,
\\
\label{Gkl=}
\begin{split}
G_{kl} &=  \frac{1}{2\pi i}\oint_{\Gamma} R_0(\zeta)\partial_{k} \partial_{l} \A(\mathbf{0}) R_0(\zeta) \zeta  \, d\zeta
\\
&- \frac{1}{2\pi i}\oint_{\Gamma}  \left( R_0(\zeta)\partial_{k} \A(\mathbf{0}) R_0(\zeta) \partial_{l}
\A(\mathbf{0})R_0(\zeta) +
R_0(\zeta)\partial_{l} \A(\mathbf{0}) R_0(\zeta) \partial_{k} \A(\mathbf{0})R_0(\zeta) \right) \zeta  \, d\zeta,
\end{split}
\\
\label{Psi=}
\Psi(\bxi) &= - \frac{1}{2\pi i}\oint_{\Gamma} (Z_3(\bxi,\zeta) + Z_4(\bxi,\zeta)) \zeta  \, d\zeta.
\end{align}
Combining \eqref{res_est4a}, \eqref{res_est5} and \eqref{Psi=} we obtain estimate  \eqref{AF-G} with the constant
\begin{equation}
\label{C3}
C_{2}(a,\mu):=\frac{d^2_{0}}{4} \left( \frac{3}{2}K^{4} \mu_{+}^3 M_{1}(a)^3 + \frac{1}{6}K^{2}\mu_{+} M_{3}(a)
+ K^{3}\mu_{+}^2 M_{1}(a) M_2(a) \right).
\end{equation}

In order to calculate the integrals in \eqref{G0=}--\eqref{Gkl=} we use the following decomposition of the resolvent
of $\mathbb{A}(\mathbf{0})$:
\begin{equation}\label{4.10}
R_{0}(\zeta)=R_{0}(\zeta)P+R_{0}(\zeta)(I-P)=-\frac{1}{\zeta}P+R_{0}(\zeta)(I-P),\ \ \zeta\in\Gamma.
\end{equation}

It is straightforward to check that
\begin{equation}\label{R1}
R_{0}(\lambda)(I-P)=\frac{1}{2\pi i}\oint_{\Gamma}(\zeta-\lambda)^{-1}R_{0}(\zeta)\,d\zeta,\ \ 0<|\lambda|<d_{0}/2.
\end{equation}
Indeed, with the help of  the Hilbert identity $R_{0}(\zeta) = R_{0}(\lambda) + (\zeta - \lambda) R_{0}(\lambda)R_{0}(\zeta)$,
the right-hand side of  \eqref{R1} can be rearranged as follows:
$$
\frac{1}{2\pi i}\oint_{\Gamma}(\zeta-\lambda)^{-1}R_{0}(\zeta)\,d\zeta =
R_0(\lambda) \frac{1}{2\pi i}\oint_{\Gamma}(\zeta-\lambda)^{-1}\,d\zeta
+R_0(\lambda) \frac{1}{2\pi i}\oint_{\Gamma} R_0(\zeta)\,d\zeta = R_0(\lambda) - R_0(\lambda)P;
$$
here we have also used  \eqref{e2.6} with $\bxi= {\mathbf 0}$.

In view of representation \eqref{R1}, the operator-function $R_{1}(\lambda):=R_{0}(\lambda)(I-P)$, $0<|\lambda|\leqslant d_{0}/2$, can be extended to a function  which is holomorphic inside the contour $\Gamma$,
the operator $ \mathbb{A}(\mathbf{0})
\vert_{QL_{2}(\Omega)} : {QL_{2}(\Omega)} \to {QL_{2}(\Omega)}$ is invertible, and the inverse operator coincides
with $R_{1}(0)$.
Substituting representation \eqref{4.10} into the integrals in  \eqref{G0=}--\eqref{Gkl=} and considering
the fact that the operator-function $R_{1}(\zeta)=R_{0}(\zeta)(I-P)$ is holomorphic inside the conrour $\Gamma$,
we conclude that
\begin{equation}
\label{G0=0}
G_{0}= - \frac{1}{2\pi i}\oint_{\Gamma} \Bigl(-\frac{1}{\zeta}P+R_{1}(\zeta)\Bigr)
\zeta \,d\zeta= 0;
\end{equation}
\begin{multline}
\label{Gj=0}
G_{j}=  \frac{1}{2\pi i}\oint_{\Gamma} \Bigl(-\frac{1}{\zeta}P+R_{1}(\zeta)\Bigr)
\partial_{j}\mathbb{A}(\mathbf{0})
\Bigl(-\frac{1}{\zeta}P+R_{1}(\zeta)\Bigr) \zeta \,d\zeta
\\
=
\frac{1}{2\pi i}\oint_{\Gamma} \frac{1}{\zeta}  P \partial_{j}\mathbb{A}(\mathbf{0}) P  \,d\zeta
=  P \partial_{j}\mathbb{A}(\mathbf{0}) P,\ \ j=1,\dots,d;
\end{multline}
\begin{multline}
\label{Gkl==}
G_{kl}= \frac{1}{2\pi i}\oint_{\Gamma} \Bigl(-\frac{1}{\zeta}P+R_{1}(\zeta)\Bigr)
\partial_{k} \partial_l \mathbb{A}(\mathbf{0})
\Bigl(-\frac{1}{\zeta}P+R_{1}(\zeta)\Bigr) \zeta \,d\zeta
\\
- \frac{1}{2\pi i}\oint_{\Gamma}  \Bigl(-\frac{1}{\zeta}P+R_{1}(\zeta)\Bigr)\partial_{k} \A(\mathbf{0})
\Bigl(-\frac{1}{\zeta}P+R_{1}(\zeta)\Bigr)  \partial_{l} \A(\mathbf{0}) \Bigl(-\frac{1}{\zeta}P+R_{1}(\zeta)\Bigr)
 \zeta  \, d\zeta
 \\
- \frac{1}{2\pi i}\oint_{\Gamma}  \Bigl(-\frac{1}{\zeta}P+R_{1}(\zeta)\Bigr)\partial_{l} \A(\mathbf{0})
\Bigl(-\frac{1}{\zeta}P+R_{1}(\zeta)\Bigr)  \partial_{k} \A(\mathbf{0}) \Bigl(-\frac{1}{\zeta}P+R_{1}(\zeta)\Bigr)
 \zeta  \, d\zeta
 \\
=  P \partial_{k} \partial_{l} \mathbb{A}(\mathbf{0}) P
- P \partial_{k}  \mathbb{A}(\mathbf{0}) R_{1}(0)  \partial_{l}  \mathbb{A}(\mathbf{0}) P
-  P \partial_{l}  \mathbb{A}(\mathbf{0}) R_{1}(0)  \partial_{k}  \mathbb{A}(\mathbf{0}) P
\\
- P \partial_{k}  \mathbb{A}(\mathbf{0})P\partial_{l}  \mathbb{A}(\mathbf{0})R_{1}(0)
- R_{1}(0)\partial_{k}  \mathbb{A}(\mathbf{0})P\partial_{l}  \mathbb{A}(\mathbf{0}) P
\\
- P \partial_{l}  \mathbb{A}(\mathbf{0})P\partial_{k}  \mathbb{A}(\mathbf{0})R_{1}(0)
- R_{1}(0)\partial_{l}  \mathbb{A}(\mathbf{0})P\partial_{k}  \mathbb{A}(\mathbf{0})P,\ \ k,l=1,\dots,d.
\end{multline}
This yields the required representations in \eqref{G_{j}=prop} and \eqref{G_kl=prop}.

Representation \eqref{AF = G2} follows from \eqref{AF=},  \eqref{G0=0}, \eqref{Gj=0} and \eqref{Gkl==}.
\end{proof}

Our next goal is to express the right-hand side of representation \eqref{G_{j}=prop} and the first three terms on the right-hand side in \eqref{G_kl=prop}
in terms of solutions to appropriate auxiliary problems.
Since $P=(\cdot,q_{0})\mathbf{1}_{\Omega}$,  then for an arbitrary $T\in\mathcal{B}(L_{2}(\Omega))$ we have
\begin{equation}\label{PMP}
PTP=(T\mathbf{1}_{\Omega},q_{0})P,\ \ T\in\mathcal{B}(L_{2}(\Omega)).
\end{equation}
Therefore,
\begin{equation}\label{e2.20}
G_{j}=P\partial_{j}\A(\mathbf{0})P=i(w_{j},q_{0})P,\ \
\text{where}\ \ iw_{j} :=\partial_{j}\A(\mathbf{0})\mathbf{1}_{\Omega},\
\ j=1,\dots,d.
\end{equation}
Due to (\ref{e2.1}), the following relations hold:
\begin{multline}
\label{e2.21}
w_{j}(\x)=\overline{w_{j}(\x)}=\intop_{\Omega}\sum_{\n\in\Z^d}(x_{j}-y_{j}+n_{j})a(\x-\y+\n)\mu(\x,\y)\,d\y
\\
= \intop_{\R^d}(x_{j}-y_{j})a(\x-\y)\mu(\x,\y)\,d\y,\ \ \x\in\Omega,\ \ j=1,\dots,d;
\end{multline}
here we took into account the periodicity of the function $\mu$. Combining \eqref{e2.20} and \eqref{e2.21} we conclude that the
operators  $G_{j}$, $j=1,\dots,d$,  take the form
\begin{equation}\label{G_{j}=fin}
G_{j}=i\alpha_{j}P,\ \ \alpha_{j}=(w_{j},q_{0})=\int_{\Omega}d\x\,q_{0}(\x)\int_{\mathbb{R}^{d}}d\y\,(x_{j}-y_{j})a(\x-\y)\mu(\x,\y),\ \ j=1,\dots,d.
\end{equation}
By \eqref{PMP} we obtain the relation
\begin{equation}
\label{e2.25}
P\partial_{k}\partial_{l}\A(\mathbf{0})P=(w_{kl},q_{0})P,\
\ k,l=1,\dots,d,
\end{equation}
where
\begin{equation*}
\label{e2.54a}
w_{kl}=\overline{w_{kl}}=\partial_{k}\partial_{l}\A(\mathbf{0})\mathbf{1}_{\Omega}\in L_{2}(\Omega),
\end{equation*}
or, in  more detailed form,
\begin{multline*}\label{e2.26}
w_{kl}(\x)=\intop_{\Omega}\sum_{\n\in\Z^d}(x_{k}-y_{k}+n_{k})(x_{l}-y_{l}+n_{l})a(\x-\y+\n)\mu(\x,\y)\,d\y
\\
= \intop_{\R^d}(x_{k}-y_{k})(x_{l}-y_{l})a(\x-\y)\mu(\x,\y)\, d\y,\ \
\x\in\Omega,\ \ k,l=1,\dots,d.
\end{multline*}
Finally, considering the periodicity of $q_0$ and $\mu$,  we have
\begin{equation}
\label{w_kl,q0}
\begin{aligned}
(w_{kl},q_0)& =
\intop_{\Omega} d\x\, q_{0}(\x) \intop_{\Omega} d\y\,
\sum_{\mathbf{n} \in \Z^d} (x_{k}-y_{k}+n_{k})(x_{l}-y_{l}+n_{l})a(\x-\y+\mathbf{n})\mu(\x,\y)
\\
&=\intop_{\Omega} d\y\, \intop_{\Omega}\,d\x\, q_{0}(\x)
\sum_{\n\in\Z^d}(x_{k}-y_{k}+n_{k})(x_{l}-y_{l}+n_{l})a(\x-\y+\n)\mu(\x,\y)
\\
&=\intop_{\Omega}d\y\, \intop_{\R^{d}}\,d\x\, q_{0}(\x)(x_{k}-y_{k})(x_{l}-y_{l})a(\x-\y)\mu(\x,\y)
\\
&=\intop_{\Omega}d\x\, \intop_{\R^{d}}\,d\y\, q_{0}(\y)(x_{k}-y_{k})(x_{l}-y_{l})a(\y-\x)\mu(\y,\x),\ \ k,l=1,\dots,d.
\end{aligned}
\end{equation}
Next, taking into account \eqref{e2.20}, we deduce from \eqref{PMP} the relations:
\begin{align}\nonumber
P\partial_{k}\A(\mathbf{0})R_{1}(0)\partial_{l}\A(\mathbf{0})P=(iv_{l},i\widetilde w_{k})P,\ \ \text{where}\ \ iv_{l}:=R_{1}(0)\partial_{l}\A(\mathbf{0})\mathbf{1}_{\Omega}=
iR_{1}(0)w_{l},\\   \label{e2.24}
i\widetilde w_{k}:=\left(\partial_{k}\A(\mathbf{0})\right)^{*}q_{0}, \ \ k,l=1,\dots,d.
\end{align}
The expressions in the definition of $v_{l}$, $l=1,\dots,d$, can be rewritten as follows:
\begin{equation}\label{v_{l}}
v_{l}=\A(\mathbf{0})^{-1}Qw_{l},\ \ Qw_{l}=w_{l}-\alpha_{l}\mathbf{1}_{\Omega},\ \ l=1,\dots,d,
\end{equation}
where the functions $w_{l}\in L_{2}(\Omega)$, $l=1,\dots,d$, and the coefficients $\alpha_{l}$, $l=1,\dots,d$,
are defined in (\ref{e2.21}) and \eqref{G_{j}=fin}, respectively.
Due to \eqref{v_{l}}, the functions $v_{l}=\overline{v_{l}}\in L_{2}(\Omega)$,
$j=1,\dots,d$, are solutions to the cell problems in $\Omega$ that read
\begin{equation*}
\label{e2.22a}
\int\limits_{\Omega}\widetilde
a(\mathbf{0},\x-\y)\mu(\x,\y)(v_{l}(\x)-v_{l}(\y))\,d\y=w_{l}(\x)-\alpha_{l},\ \ \x\in\Omega;\quad
\int\limits_{\Omega}v_{l}(\x)q_{0}(\x)\,d\x=0.
\end{equation*}
Assuming that the functions $v_{l}\in L_{2}(\Omega)$, $l=1,\dots,d$, are extended periodically on the whole
$\R^d$, one can rearrange these problems as follows:
\begin{align}\nonumber
&\intop\limits_{\R^d}
a(\x-\y)\mu(\x,\y)(v_{l}(\x)-v_{l}(\y))\,d\y=\intop\limits_{\R^d}
a(\x-\y)\mu(\x,\y)(x_{l}-y_{l}) \,d\y-\alpha_{l},\ \ \x\in\Omega;
\\     \label{e2.23}
&\intop\limits_{\Omega}v_{l}(\x)q_{0}(\x)\,d\x=0.
\end{align}
In view of \eqref{G_{j}=fin} problems (\ref{e2.23}) are uniquely solvable.

Combining \eqref{e2.1} and \eqref{e2.24} we obtain the relations
\begin{equation}\label{wtw_{k}}
\widetilde w_{k}(\x)=\overline{\widetilde w_{k}(\x)\,}=\intop_{\R^{d}}(x_{k}-y_{k})a(\y-\x)\mu(\y,\x)q_{0}(\y)\,d\y,\ \ \x\in\Omega,\ \ k=1,\dots,d.
\end{equation}

As a consequence of  \eqref{G_kl=prop}, \eqref{G_{j}=fin}--\eqref{e2.24} and \eqref{wtw_{k}} we have
the following statement:

\begin{proposition}
Under the assumptions of Proposition {\rm \ref{prop2.6}} the operators $G_{j}$, $G_{kl}$
admit the representations
\begin{equation*}
G_{j}=i\alpha_{j} P,\ \ j=1,\dots,d,\ \ G_{kl} = g_{kl} P+PG_{kl}Q+QG_{kl}P,
\ \ k,l=1,\dots,d;
\end{equation*}
here $Q=I-P$, the real coefficients  $\alpha_{j}$ are introduced in \eqref{G_{j}=fin},
\begin{multline}
\label{Gkl===}
g_{kl} = (w_{kl},q_0)-(v_{k},\widetilde w_{l})-(v_{l},\widetilde w_{k})
\\
= \intop_{\Omega}d\x\intop_{\R^d} d\y((x_{k}-y_{k})(x_{l}-y_{l})-v_{k}(\x)(x_{l}-y_{l})-v_{l}(\x)(x_{k}-y_{k}))a(\y-\x)\mu(\y,\x)q_{0}(\y),
\end{multline}
and $v_j$ is a periodic solution of  \eqref{e2.23}.
Furthermore,
\begin{equation}\label{G_{j}===}
[G]_{1}(\bxi)=\sum_{j=1}^{d}G_{j}\xi_{j}=i\sum_{j=1}^{d}\alpha_{j}\xi_{j}P=i\langle{\boldsymbol\alpha},\bxi\rangle P,
\quad \bxi   \in \R^d,
\end{equation}
\begin{multline}
\label{Gkl====}
 [G]_2(\bxi) = \frac{1}{2}\sum_{k,l=1}^d G_{kl} \xi_k \xi_l =
 \frac{1}{2}\sum_{k,l=1}^d g_{kl} \xi_k \xi_l P +\frac{1}{2}P\sum_{k,l=1}^{d}G_{kl}\xi_{k}\xi_{l}Q+\frac{1}{2}Q\sum_{k,l=1}^{d}G_{kl}\xi_{k}\xi_{l}P\\
= \langle g^0 \bxi, \bxi\rangle P+\frac{1}{2}P\sum_{k,l=1}^{d}G_{kl}\xi_{k}\xi_{l}Q+\frac{1}{2}Q\sum_{k,l=1}^{d}G_{kl}\xi_{k}\xi_{l}P,\quad \bxi   \in \R^d,
 \end{multline}
  where ${\boldsymbol\alpha}=(\alpha_{1},\dots,\alpha_{d})^{t}$, and $g^0$ is a $(d \times d)$-matrix with the entries $\frac{1}{2} g_{kl}$, $k,l=1, \dots,d$.
\end{proposition}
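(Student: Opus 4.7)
The proposition is essentially a reorganization of the formulas already derived for $G_j$ and $G_{kl}$, sorted according to the decomposition $I = P + Q$. My plan rests on two elementary observations: first, since $P = (\,\cdot\,, q_0) \mathbf{1}_\Omega$, for any bounded $T$ one has $P T P = (T \mathbf{1}_\Omega, q_0) P$, which is \eqref{PMP}; second, since $R_1(0) = Q \A(\mathbf{0})^{-1} Q$ acts inside $Q L_2(\Omega)$, we have $R_1(0) P = P R_1(0) = 0$.

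For the operators $G_j$, I would apply \eqref{PMP} with $T = \partial_j \A(\mathbf{0})$ and invoke $iw_j = \partial_j \A(\mathbf{0}) \mathbf{1}_\Omega$ from \eqref{e2.20} to obtain $G_j = (iw_j, q_0) P = i \alpha_j P$. Summing in $\bxi$ then yields \eqref{G_{j}===} directly, with $\boldsymbol{\alpha} = (\alpha_1, \dots, \alpha_d)^t$.

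For the operators $G_{kl}$ I would begin from the explicit representation \eqref{G_kl=prop}, whose seven summands each contain an explicit $P$ factor somewhere in the product. A term-by-term inspection shows that every one of them is killed when multiplied by $Q$ on both sides, so $Q G_{kl} Q = 0$ and hence $G_{kl} = P G_{kl} P + P G_{kl} Q + Q G_{kl} P$. To evaluate $P G_{kl} P$ I note that the last four terms in \eqref{G_kl=prop} carry an outer $R_1(0)$ factor and are thus annihilated by one-sided multiplication by $P$; only the first three contribute. For the first of these, \eqref{PMP} with $T = \partial_k \partial_l \A(\mathbf{0})$ together with \eqref{e2.25} yields $(w_{kl}, q_0) P$. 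For the other two, I apply \eqref{PMP} to $\partial_k \A(\mathbf{0}) R_1(0) \partial_l \A(\mathbf{0})$ and the companion operator, and use the definitions in \eqref{e2.24} to identify $P \partial_k \A(\mathbf{0}) R_1(0) \partial_l \A(\mathbf{0}) P = (iv_l, i \widetilde{w}_k) P = (v_l, \widetilde{w}_k) P$, where the last equality uses that $v_l$ and $\widetilde{w}_k$ are real-valued, so $(iv_l, i\widetilde{w}_k) = i\overline{i}(v_l,\widetilde{w}_k) = (v_l,\widetilde{w}_k)$. Collecting these contributions gives $g_{kl} = (w_{kl}, q_0) - (v_l, \widetilde{w}_k) - (v_k, \widetilde{w}_l)$, and substituting \eqref{w_kl,q0} together with the explicit form \eqref{wtw_{k}} of $\widetilde{w}_k$ produces the integral expression \eqref{Gkl===}.

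Identity \eqref{Gkl====} is then obtained by substituting the decomposition $G_{kl} = g_{kl} P + P G_{kl} Q + Q G_{kl} P$ into $[G]_2(\bxi) = \frac{1}{2} \sum_{k,l} G_{kl} \xi_k \xi_l$: the scalar $P$-coefficient $\frac{1}{2} \sum_{k,l} g_{kl} \xi_k \xi_l$ equals $\langle g^0 \bxi, \bxi \rangle$ by the very definition of $g^0$, while the off-diagonal blocks are simply collected inside the summation. I do not expect any conceptual obstruction; the main care needed is the bookkeeping, in particular checking that each of the seven summands in \eqref{G_kl=prop} does carry a $P$ at an outer position (so that $Q G_{kl} Q = 0$ term by term), and tracking the conjugation conventions so that the imaginary units coming from $\partial_k \A(\mathbf{0}) \mathbf{1}_\Omega$ and $(\partial_k \A(\mathbf{0}))^* q_0$ cancel correctly in the $L_2(\Omega)$-pairings.
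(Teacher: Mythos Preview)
Your proposal is correct and follows exactly the route taken in the paper: the proposition is stated there as a direct consequence of \eqref{G_kl=prop}, \eqref{G_{j}=fin}--\eqref{e2.24} and \eqref{wtw_{k}}, and your argument simply spells out the bookkeeping (using $PQ=QP=0$, $PR_1(0)=R_1(0)P=0$, and \eqref{PMP}) that the paper leaves implicit. The cancellation of the imaginary units and the identification of the integral form of $g_{kl}$ via \eqref{w_kl,q0} and \eqref{wtw_{k}} are handled correctly.
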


The matrix $g^0$ is called the \emph{effective matrix};
in view of  \eqref{Gkl===} we infer that $g^{0}$ is real and symmetric; in Section \ref{sec3.1} below
it is shown that the matrix $g^0$ is positive definite.

\section{Approximation of the resolvent $(\A + \eps^2 I)^{-1}$}\label{Sec3}

\subsection{Approximation of the resolvent of operator  $\A(\bxi)$}\label{sec3.1}
In this section we construct an approximation of the resolvent   $(\A(\bxi) + \eps^2 I)^{-1}$ for small $\eps >0$,
the accuracy of this approximation being $O(\eps^{-1})$, see Theorem  \ref{teor2.2} below.
Substituting $F(\bxi)u$ for $u$ in (\ref{e1.30}) yields
\begin{equation}\label{e2.35}
\operatorname{Re}([q_{0}]\A(\bxi;a,\mu)F(\bxi)F(\bxi)u,F(\bxi)u)\ge\mu_{-}q_{-}C(a)|\bxi|^{2}(F(\bxi)u,F(\bxi)u),\ \
u\in L_{2}(\Omega),\ \ |\bxi|\le\delta_{0}(a,\mu).
\end{equation}
Combining the last inequality with expansion \eqref{AF = G2} and considering  \eqref{G_{j}===} and \eqref{Gkl====},
we obtain
\begin{multline}\label{AFFuFu}
\operatorname{Re}\bigl([q_{0}](i\langle{\boldsymbol\alpha},\bxi\rangle P+\langle g^0 \bxi, \bxi\rangle P+P[G]_{2}(\bxi) Q+Q[G]_{2}(\bxi) P+ \Psi(\bxi))F(\bxi)u,F(\bxi)u \bigr)
\\
\ge\mu_{-}q_{-}C(a)|\bxi|^{2}(F(\bxi)u,F(\bxi)u),\ \
u\in L_{2}(\Omega),\ \ |\bxi|\le\delta_{0}(a,\mu).
\end{multline}
Let us rearrange the left-hand side of this inequality. Since the operator $[q_{0}]P=(\cdot,q_{0})q_{0}$ is
self-adjoint, the relation
\begin{equation}\label{3.2a}
\operatorname{Re}([q_{0}]i\langle{\boldsymbol\alpha},\bxi\rangle P F(\bxi)u,F(\bxi)u)=0
\end{equation}
holds true.  Taking into account the relations $QP =0$ and $P^{*}[q_{0}]=[q_{0}]P$, one has
\begin{equation}\label{qPGQFuFu}
\begin{aligned}
&\operatorname{Re}\bigl([q_{0}](P[G]_{2}(\bxi) Q+Q[G]_{2}(\bxi) P+ \Psi(\bxi))F(\bxi)u,F(\bxi)u \bigr)
\\
&=
\operatorname{Re}\bigl([q_{0}]P[G]_{2}(\bxi) Q (F(\bxi) -P)u,F(\bxi)u \bigr)
\\
&+ \operatorname{Re}\bigl([q_{0}]Q[G]_{2}(\bxi) P F(\bxi)u,(F(\bxi) - P)u \bigr)
+ \operatorname{Re}\bigl([q_{0}] \Psi(\bxi)F(\bxi)u,F(\bxi)u \bigr).
\end{aligned}
\end{equation}
Since $q_0 \in L_\infty$, making use of  \eqref{F-P}, \eqref{AF = G2} and \eqref{AF-G} we conclude that
the absolute value of the right-hand side in \eqref{qPGQFuFu} in not greater than $C' |\bxi|^3 \| u \|^2$
with some constant $C'$.
In view of \eqref{AFFuFu} and \eqref{3.2a} this leads to the inequality
\begin{multline}\label{AFFuFu==}
\operatorname{Re}([q_{0}]\langle g^0 \bxi, \bxi\rangle PF(\bxi)u,F(\bxi)u)
\\
\ge\mu_{-}q_{-}C(a)|\bxi|^{2}(F(\bxi)u,F(\bxi)u) - C' |\bxi|^3 \|u\|^2,\ \
u\in L_{2}(\Omega),\ \ |\bxi|\le\delta_{0}(a,\mu).
\end{multline}
Using  \eqref{F-P} one more time, we have
\begin{multline*}
\operatorname{Re}([q_{0}]\langle g^0 \bxi, \bxi\rangle Pu,Pu)
\ge\mu_{-}q_{-}C(a)|\bxi|^{2}(Pu,Pu) - C'' |\bxi|^3 \|u\|^2,\ \
u\in L_{2}(\Omega),\ \ |\bxi|\le\delta_{0}(a,\mu),
\end{multline*}
with some constant $C''$.
Substituting here $\mathbf{1}_{\Omega}$ for $u$ yields
\begin{equation*}
\langle g^0 \bxi,\bxi \rangle \ge\mu_{-}q_{-}C(a)|\bxi|^{2} - C'' |\bxi|^{3},
\quad |\bxi|\le\delta_{0}(a,\mu).
\end{equation*}
It remains to divide the last inequality by $|\bxi|^{2}$ and send $|\bxi|$ to zero.
The resulting lower bound reads
\begin{equation*}
 \langle g^0 \boldsymbol{\theta},\boldsymbol{\theta} \rangle \ge\mu_{-}q_{-}C(a),\
\ \boldsymbol{\theta}\in\mathbb{S}^{d-1},
\end{equation*}
or, equivalently,
\begin{equation}\label{e2.36}
\langle g^0 \bxi,\bxi \rangle \ge\mu_{-}q_{-}C(a)|\bxi|^{2},\
\ \bxi\in\R^d.
\end{equation}
This yields the desired positive definiteness of $g^0$.

In order to formulate the next statement we first introduce the notation
\begin{equation*}
\label{Xi=}
\Xi(\bxi,\eps) := (\A(\bxi)+\varepsilon^{2}I)^{-1} F(\bxi) - \left( \langle g^0 \bxi,\bxi \rangle + i\langle {\boldsymbol\alpha},\bxi\rangle + \eps^2 \right)^{-1} P,
 \ \ \bxi\in\widetilde{\Omega}, \ \ \eps >0.
\end{equation*}
\begin{proposition}
Let conditions \eqref{h1.1}--\eqref{h1.3}  be satisfied, and assume that $M_3(a) < \infty$.
Then the following estimate holds:
\begin{equation}
\label{Xi=le}
\| \Xi(\bxi,\eps) \| \le
\frac{ C_3(a,\mu)  |\bxi|}{\mu_{-}q_{-}q_{+}^{-1}C(a)|\bxi|^{2} + \eps^2}
+
\frac{C_4(a,\mu) |\bxi|^3}{(\mu_{-}q_{-}q_{+}^{-1}C(a)|\bxi|^{2} + \eps^2)^2},
\quad |\bxi|\le\delta_{0}(a,\mu), \quad \eps>0,
\end{equation}
where
\begin{align}
\label{C33}
C_3(a,\mu) &= \frac{5}{4} K d_{0}q_{-}^{-1/2}q_{+}^{1/2} C_1(a,\mu),
\\
\label{C4}
C_4(a,\mu) &= \frac{1}{2}K d_{0}q_{-}^{-1}q_{+}\left(\frac{3}{4}K d_{0}C_2(a,\mu)+\frac{1}{2}C_{1}(a,\mu)S(d_{0},K,a,\mu)d \left(1 + \frac{1}{2} K d_0 \right)\right),
\end{align}
and the constant $S(d_{0},K,a,\mu)$ is defined in  \eqref{GklG2}.
\end{proposition}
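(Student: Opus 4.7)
The plan is to reduce the operator-valued estimate to a scalar calculation by exploiting the rank-one structure of $F(\bxi)$. Because $F(\bxi)$ is the Riesz projector for a simple isolated eigenvalue $\lambda_1(\bxi)$ of $\A(\bxi)$ inside $B_{d_{0}/3}(0)$ (Proposition \ref{prop2.1}), $\operatorname{Ran} F(\bxi)$ is a one-dimensional invariant subspace on which $\A(\bxi)$ acts as multiplication by $\lambda_1(\bxi)$. Hence $\A(\bxi) F(\bxi) = \lambda_1(\bxi) F(\bxi)$ and
$$
(\A(\bxi) + \eps^2 I)^{-1} F(\bxi) = (\lambda_1(\bxi) + \eps^2)^{-1} F(\bxi).
$$
Setting $\lambda(\bxi,\eps) = \langle g^0 \bxi,\bxi\rangle + i\langle\boldsymbol{\alpha},\bxi\rangle + \eps^2$, I would then split
$$
\Xi(\bxi,\eps) = (\lambda_1(\bxi) + \eps^2)^{-1}\bigl(F(\bxi) - P\bigr) + \bigl[(\lambda_1(\bxi) + \eps^2)^{-1} - \lambda(\bxi,\eps)^{-1}\bigr] P
$$
and bound each summand separately.

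The central step is identifying $\lambda_1(\bxi)$ up to order $|\bxi|^3$. Taking the trace of $\lambda_1(\bxi) F(\bxi) = \A(\bxi) F(\bxi)$, substituting the expansion \eqref{AF = G2}, and using $\operatorname{tr} F(\bxi) = 1$ together with Corollary \ref{YadroAP} (giving $\operatorname{tr} P = (\mathbf{1}_\Omega, q_0) = 1$), I obtain
$$
\lambda_1(\bxi) = \operatorname{tr}([G]_1(\bxi)) + \operatorname{tr}([G]_2(\bxi)) + \operatorname{tr}(\Psi(\bxi)) = i\langle\boldsymbol{\alpha},\bxi\rangle + \langle g^0 \bxi,\bxi\rangle + \operatorname{tr}(\Psi(\bxi)).
$$
The off-diagonal contributions $P[G]_2(\bxi) Q$ and $Q[G]_2(\bxi) P$ appearing in \eqref{Gkl====} drop out after tracing by the cyclic property and $PQ = QP = 0$. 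As $\Psi(\bxi)$ has uniformly bounded rank (being a difference of operators each of finite rank controlled independently of $\bxi$), the operator bound \eqref{AF-G} gives $|\operatorname{tr}(\Psi(\bxi))| \le C |\bxi|^3$, so $|\lambda_1(\bxi) - \lambda(\bxi,0)| \le C |\bxi|^3$.

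The scalar resolvent bounds are then supplied by Corollary \ref{corollary1.33}: since $\lambda_1(\bxi) \in \sigma(\A(\bxi)) \subset \{\operatorname{Re} z \ge \mu_- q_- q_+^{-1} C(a)|\bxi|^2\}$, one has $|\lambda_1(\bxi) + \eps^2|^{-1} \le q_-^{-1/2} q_+^{1/2}(\mu_- q_- q_+^{-1} C(a)|\bxi|^2 + \eps^2)^{-1}$, while \eqref{e2.36} yields $|\lambda(\bxi,\eps)|^{-1} \le (\mu_- q_- C(a)|\bxi|^2 + \eps^2)^{-1}$. Combining with Proposition \ref{prop2.3}'s bound $\|F(\bxi) - P\| \le C_1 |\bxi|$, the first summand of $\Xi$ is controlled by an expression of the $C_3$-type in \eqref{Xi=le}. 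For the second summand, the algebraic identity
$$
(\lambda_1(\bxi) + \eps^2)^{-1} - \lambda(\bxi,\eps)^{-1} = \frac{\lambda(\bxi,0) - \lambda_1(\bxi)}{(\lambda_1(\bxi) + \eps^2)\,\lambda(\bxi,\eps)}
$$
together with the $O(|\bxi|^3)$ control on the numerator and the two denominator bounds (absorbing the second denominator into the first via $q_+ \ge q_-$) produces the $C_4$-type contribution.

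The main obstacle I expect is reproducing the precise constants $C_3(a,\mu)$ and $C_4(a,\mu)$ stated in \eqref{C33}--\eqref{C4}: the factors $K d_{0}$ in these constants suggest that the authors route the bound through a Riesz contour integral expression such as $(\A(\bxi) + \eps^2 I)^{-1}(F(\bxi) - P) = -\frac{1}{2\pi i}\oint_\Gamma (\A(\bxi) + \eps^2 I)^{-1}\bigl(R(\bxi,\zeta) - R_0(\zeta)\bigr)\,d\zeta$ (in the spirit of the ``third method'' used in Propositions \ref{prop2.3} and \ref{prop2.6}), combined with the resolvent identity $(\A(\bxi) + \eps^2 I)^{-1} R(\bxi,\zeta) = (\zeta + \eps^2)^{-1}\bigl[R(\bxi,\zeta) - (\A(\bxi) + \eps^2 I)^{-1}\bigr]$. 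This contour approach gives the stated constants at the cost of more elaborate bookkeeping, but the conceptual content -- that non-symmetry enters only through the scalar correction $\operatorname{tr}(\Psi(\bxi)) = O(|\bxi|^3)$ -- is already captured by the eigenvalue reduction above.
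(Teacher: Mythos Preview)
Your approach is correct and genuinely different from the paper's. The paper does \emph{not} pass through the eigenvalue $\lambda_1(\bxi)$ at all; instead it writes down a direct operator identity
\[
\Xi(\bxi,\eps) = F(\bxi)(\A(\bxi)+\eps^2 I)^{-1}(F(\bxi)-P) + (F(\bxi)-P)\,\lambda(\bxi,\eps)^{-1}P - F(\bxi)(\A(\bxi)+\eps^2 I)^{-1}\bigl(\A(\bxi)F(\bxi) - \lambda(\bxi,0)P\bigr)\lambda(\bxi,\eps)^{-1}P,
\]
and then rewrites the last bracket via \eqref{AF = G2}, \eqref{G_{j}===}, \eqref{Gkl====} as $\Psi(\bxi)P + Q[G]_2(\bxi)P$; the off-diagonal piece $Q[G]_2(\bxi)P$ is absorbed by pairing it with $(F(\bxi)-P)Q$ (since $PQ=0$), which produces the factor $C_1(a,\mu)\,S(d_0,K,a,\mu)\,d\,(1+\tfrac12 K d_0)$ appearing in $C_4$. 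Each term is then estimated using the full operator bounds $\|(\A(\bxi)+\eps^2 I)^{-1}\|\le\mathcal{S}(\bxi,\eps)$ from \eqref{e1.30.1}, $|\lambda(\bxi,\eps)|^{-1}\le\mathcal{S}(\bxi,\eps)$ from \eqref{e2.36}, and the projector norms $\|F(\bxi)\|\le\tfrac34 Kd_0$, $\|P\|\le\tfrac12 Kd_0$, $\|Q\|\le 1+\tfrac12 Kd_0$ coming from the contour integral; this bookkeeping is exactly what yields the stated constants \eqref{C33}--\eqref{C4}. Your eigenvalue reduction is conceptually cleaner --- it makes transparent that the only obstruction is the scalar $\operatorname{tr}\Psi(\bxi)=\lambda_1(\bxi)-\lambda(\bxi,0)=O(|\bxi|^3)$ --- and even gives slightly sharper constants in places (you need only $|\lambda_1(\bxi)+\eps^2|^{-1}$, not the full resolvent norm with its $q_-^{-1/2}q_+^{1/2}$ factor). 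The paper's route buys the explicit constants without the rank/trace argument and keeps the machinery uniform with Propositions \ref{prop2.3} and \ref{prop2.6}. Your anticipation about the contour integral (``third method'') is off: the paper uses the contour only to bound $\|F(\bxi)\|$, $\|P\|$, $\|R_1(0)\|$, not to decompose $\Xi$ itself.
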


\begin{proof}
It follows from  (\ref{e1.30.1}) and (\ref{e2.36}) that
\begin{align}\label{e2.39}
&\|(\A(\bxi)+\varepsilon^{2}I)^{-1} \|
 \le q_{-}^{-1/2}q_{+}^{1/2}(\mu_{-}q_{-}q_{+}^{-1}C(a)|\bxi|^{2}+\varepsilon^{2})^{-1}:=\mathcal{S}(\bxi,\varepsilon),\
\ \varepsilon>0,\ \ \bxi \in \wt{\Omega};
\\
\label{e2.40}
&\left|(\langle g^0 \bxi,\bxi \rangle + i\langle {\boldsymbol\alpha},\bxi\rangle +\varepsilon^{2})^{-1}\right|
 \le(\mu_{-}q_{-}C(a)|\bxi|^{2}+\varepsilon^{2})^{-1}\le \mathcal{S}(\bxi,\varepsilon),\
\ \varepsilon>0,\ \ \bxi \in \wt{\Omega};
\end{align}
in the second inequality in \eqref{e2.40} the lower bound $q_{+}\ge 1$ has been used.
In view of \eqref{e2.6} and \eqref{e2.32},
\begin{equation}\label{FPQ}
\|F(\bxi)\|\le\frac{3}{4}K d_{0},\ \ |\bxi|\le\delta_{0}(a,\mu);\ \ \|P\|\le\frac{1}{2}
 K d_{0};\ \ \|Q\|\le 1+\frac{1}{2} K d_{0}.
\end{equation}
Next, representation  \eqref{R1} yields the inequality
\begin{equation}\label{|R1|}
\|R_{1}(0)\|\le K.
\end{equation}
Due to \eqref{2.4} and \eqref{e2.3} the first order derivatives $\partial_{j}\A(\boldsymbol{0})$ admit the upper bounds
\begin{equation}\label{d_{j}A}
\|\partial_{j}\A(\boldsymbol{0})\|\le\mu_{+}M_{1}(a),\ \ j=1,\dots,d,
\end{equation}
while, in view of \eqref{2.6} and \eqref{e2.4}, the second order derivatives  $\partial_{k}\partial_{l}\A(\boldsymbol{0})$
satisfy the estimates
\begin{equation}\label{d_{k}d_{l}A}
\|\partial_{k}\partial_{l}\A(\boldsymbol{0})\|\le \mu_{+}M_{2}(a),\ \ k,l=1,\dots,d.
\end{equation}
Representation  \eqref{Gkl==} and estimates  \eqref{FPQ}--\eqref{d_{k}d_{l}A} lead to the relations
\begin{align}\label{GklG2}
\|G_{kl}\|&\le S(d_{0},K,a,\mu):=\left(\mu_{+}M_{2}(a)+
6(\mu_{+}M_{1}(a))^{2}K \right)\left(\frac{1}{2} K d_{0}\right)^{2},\ \ k,l=1,\dots,d;\\
\label{GklG22}
\|[G]_{2}(\bxi)\|&\le \frac{d}{2} S(d_{0},K,a,\mu)|\bxi|^{2},\ \ \bxi\in\widetilde\Omega.
\end{align}
Writing down an evident identity
\begin{multline}\label{e2.41}
\Xi(\bxi,\eps) =
F(\bxi)(\A(\bxi)+\varepsilon^{2}I)^{-1}(F(\bxi)-P)+(F(\bxi)-P) (\langle g^0 \bxi,\bxi \rangle + i\langle {\boldsymbol\alpha},\bxi\rangle +\varepsilon^{2})^{-1} P
-
\\
-
F(\bxi)(\A(\bxi)+\varepsilon^{2}I)^{-1}
\left(\A(\bxi)F(\bxi) - (\langle g^0 \bxi,\bxi \rangle + i\langle {\boldsymbol\alpha},\bxi\rangle)P \right)
(\langle g^0 \bxi,\bxi \rangle + i\langle {\boldsymbol\alpha},\bxi\rangle +\varepsilon^{2})^{-1} P,
\end{multline}
one can rearrange the third term on its right-hand side as follows:
\begin{multline}\label{tretie}
-
F(\bxi)(\A(\bxi)+\varepsilon^{2}I)^{-1}
\left(\A(\bxi)F(\bxi) - (\langle g^0 \bxi,\bxi \rangle + i\langle {\boldsymbol\alpha},\bxi\rangle)P \right)
(\langle g^0 \bxi,\bxi \rangle + i\langle {\boldsymbol\alpha},\bxi\rangle +\varepsilon^{2})^{-1} P\\
=-
F(\bxi)(\A(\bxi)+\varepsilon^{2}I)^{-1}\Psi(\bxi)(\langle g^0 \bxi,\bxi \rangle + i\langle {\boldsymbol\alpha},\bxi\rangle +\varepsilon^{2})^{-1} P-\\
-
(\A(\bxi)+\varepsilon^{2}I)^{-1}(F(\bxi)-P)Q[G]_{2}(\bxi)(\langle g^0 \bxi,\bxi \rangle + i\langle {\boldsymbol\alpha},\bxi\rangle +\varepsilon^{2})^{-1} P;
\end{multline}
here \eqref{AF = G2}, \eqref{G_{j}===} and \eqref{Gkl====} have been used.
Now the required estimate \eqref{Xi=le}  is a consequence of  \eqref{F-P}, \eqref{AF-G} and
(\ref{e2.39})--(\ref{FPQ}), \eqref{GklG22}--\eqref{tretie}.
\end{proof}

\begin{theorem}\label{teor2.2}
Let conditions  \eqref{h1.1}--\eqref{h1.3} be fulfilled, and assume that $M_3(a) < \infty$.
Then the following estimate holds:
\begin{equation}\label{e2.37}
\left\|(\A(\bxi)+\varepsilon^{2}I)^{-1}-
(\langle g^0 \bxi,\bxi \rangle +i\langle {\boldsymbol\alpha},\bxi\rangle+\varepsilon^{2})^{-1} P \right\|\le
C_5(a,\mu)\varepsilon^{-1},\ \ \varepsilon>0,\ \
|\bxi|\le\delta_{0}(a,\mu),
\end{equation}
where the constant $C_5(a,\mu)$ depends only on $d_{0}$, $K$, $q_{-}$, $q_{+}$, $d$, $\mu_-$, $\mu_+$, $M_1(a)$, $M_2(a)$, $M_3(a)$, ${\mathcal M}(a)$, ${\mathcal C}_\pi(a)$ and
${\mathcal C}_{r(a)}(a)$.
\end{theorem}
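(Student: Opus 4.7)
The plan is to split
$$
(\A(\bxi)+\varepsilon^2I)^{-1} = (\A(\bxi)+\varepsilon^2I)^{-1}F(\bxi) + (\A(\bxi)+\varepsilon^2I)^{-1}(I-F(\bxi))
$$
and handle the two pieces separately. The first piece differs from the target $(\langle g^0\bxi,\bxi\rangle + i\langle{\boldsymbol\alpha},\bxi\rangle + \varepsilon^2)^{-1}P$ by exactly $\Xi(\bxi,\varepsilon)$, so the previous proposition together with bound \eqref{Xi=le} reduces the task to checking that both terms on the right-hand side of \eqref{Xi=le} are of order $\varepsilon^{-1}$ uniformly in $|\bxi|\le\delta_0(a,\mu)$. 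Writing $c := \mu_-q_-q_+^{-1}C(a)$ for brevity, the AM--GM inequality $c|\bxi|^2 + \varepsilon^2 \ge 2\sqrt{c}\,|\bxi|\varepsilon$ immediately gives $|\bxi|/(c|\bxi|^2+\varepsilon^2) \le (2\sqrt{c}\,\varepsilon)^{-1}$; for the cubic term, factoring one copy of the denominator by the same bound and using $|\bxi|^2/(c|\bxi|^2+\varepsilon^2) \le c^{-1}$ yields $|\bxi|^3/(c|\bxi|^2+\varepsilon^2)^2 \le (2c^{3/2}\varepsilon)^{-1}$.

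For the second piece, the point is that on $(I-F(\bxi))L_2(\Omega)$ the operator $\A(\bxi)$ has no spectrum inside the disc $B_{d_0/3}(0)$ by Proposition \ref{prop2.1}, so $-\varepsilon^2$ is separated from this restricted spectrum when $\varepsilon$ is small. The derivation of \eqref{R1} adapts with $\A(\mathbf 0)$ replaced by $\A(\bxi)$ and $P$ replaced by $F(\bxi)$ to give, for $0 < \varepsilon^2 \le d_0/4$,
$$
(\A(\bxi)+\varepsilon^2I)^{-1}(I-F(\bxi)) = \frac{1}{2\pi i}\oint_{\Gamma}\frac{R(\bxi,\zeta)}{\zeta+\varepsilon^2}\,d\zeta,
$$
with $\Gamma = \{\zeta\in\C:|\zeta| = d_0/2\}$. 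Combining the bound $\|R(\bxi,\zeta)\| \le 3K/2$ on $\Gamma$ from \eqref{e2.32}, the estimate $|\zeta + \varepsilon^2| \ge d_0/4$, and the length $\pi d_0$ of $\Gamma$, I obtain $\|(\A(\bxi)+\varepsilon^2I)^{-1}(I-F(\bxi))\| \le 3K$ independently of $\varepsilon$ and $\bxi$. In the complementary range $\varepsilon^2 \ge d_0/4$, the direct estimate \eqref{e2.39} gives $\|(\A(\bxi)+\varepsilon^2I)^{-1}\| \le q_-^{-1/2}q_+^{1/2}\varepsilon^{-2} \le 2 q_-^{-1/2}q_+^{1/2} d_0^{-1/2}\varepsilon^{-1}$, which is of the required form; the same trivial bound controls the approximation term in this regime.

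Assembling these bounds produces the asserted inequality with an explicit $C_5(a,\mu)$ compounded from $C_3(a,\mu)$, $C_4(a,\mu)$, $K$, $d_0$, $q_-$, $q_+$ and $C(a)$. I do not expect a genuine analytic obstacle here: the heavy work has already been done in the threshold expansion of Proposition \ref{prop2.6} and in the spectral separation of Proposition \ref{prop2.1}. The only real care is in the bookkeeping of constants and in splitting into small-$\varepsilon$ and large-$\varepsilon$ regimes, so that $-\varepsilon^2$ is always kept at a controlled distance from the integration contour.
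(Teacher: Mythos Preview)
Your proposal is correct and follows essentially the same route as the paper: split off the Riesz projector $F(\bxi)$, control the $F$-part via $\Xi(\bxi,\eps)$ and the AM--GM bounds on \eqref{Xi=le}, and control the $(I-F(\bxi))$-part by the contour-integral representation for small $\eps$ together with a trivial resolvent bound for large $\eps$. The only cosmetic difference is that the paper, instead of simply noting that the constant bound $3K$ is automatically $O(\eps^{-1})$ on the range $\eps\le\sqrt{d_0}/2$, takes the geometric mean of $3K$ and the $\eps^{-2}$ bound \eqref{e2.38.1} to produce an explicit $\eps^{-1}$ estimate; your shortcut is perfectly valid and slightly simpler.
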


\begin{proof}
From  \eqref{e2.32} and an obvious representation
\begin{equation*}
R(\bxi,-\varepsilon^{2})(I-F(\bxi))=\frac{1}{2\pi i}\oint_{\Gamma}(\zeta+\varepsilon^{2})^{-1}R(\bxi,\zeta)\,d\zeta,\ \
0<\varepsilon^{2}<d_{0}/2,\ \ |\bxi|\le\delta_{0}(a,\mu),
\end{equation*}
it follows that
\begin{equation}\label{e2.38}
\|(\A(\bxi)+\varepsilon^{2}I)^{-1}(I-F(\bxi))\|\le
3K,\ \ 0<\eps^{2}\le d_{0}/4,\ \ |\bxi|\le\delta_{0}(a,\mu).
\end{equation}
On the other hand, relations \eqref{e1.30.1} and \eqref{FPQ} imply the estimate
\begin{equation}\label{e2.38.1}
\|(\A(\bxi)+\varepsilon^{2}I)^{-1}(I-F(\bxi))\|\le
q_{-}^{-1/2}q_{+}^{1/2}\Bigl(1+\frac{3}{4}K d_{0}\Bigr)\eps^{-2},\ \ \eps>0,\ \ |\bxi|\le\delta_{0}(a,\mu).
\end{equation}
By the last two inequalities we have
\begin{equation}\label{e2.38a}
\|(\A(\bxi)+\varepsilon^{2}I)^{-1}(I-F(\bxi))\|\le \sqrt{3K}
q_{-}^{-1/4}q_{+}^{1/4} \Bigl(1+\frac{3}{4}K d_{0}\Bigr)^{1/2} \eps^{-1},\ \ 0 < \eps \le \frac{\sqrt{d_0}}{2},\ \ |\bxi|\le\delta_{0}(a,\mu).
\end{equation}
Inequality  \eqref{e2.38.1} also implies that
\begin{equation}
\label{e2.38.2}
\|(\A(\bxi)+\varepsilon^{2}I)^{-1}(I-F(\bxi))\|\le
q_{-}^{-1/2}q_{+}^{1/2}\Bigl(1+\frac{3}{4}K d_{0}\Bigr)2d_0^{-1/2} \eps^{-1},\ \  \eps > \frac{\sqrt{d_0}}{2},\ \ |\bxi|\le\delta_{0}(a,\mu).
\end{equation}
Combining \eqref{e2.38a} and \eqref{e2.38.2} yields
\begin{equation}
\label{e2.38.3}
\begin{aligned}
\|(\A(\bxi)+\varepsilon^{2}I)^{-1}(I-F(\bxi))\|\le C_5^{(1)} \eps^{-1},\ \  \eps > 0,\ \ |\bxi|\le\delta_{0}(a,\mu),
\\
C_5^{(1)} = \max \left\{ \sqrt{3K}
q_{-}^{-1/4}q_{+}^{1/4} \Bigl(1+\frac{3}{4}K d_{0}\Bigr)^{1/2}, q_{-}^{-1/2}q_{+}^{1/2}\Bigl(1+\frac{3}{4}K d_{0}\Bigr)2d_0^{-1/2} \right\}.
\end{aligned}
\end{equation}

Directly from \eqref{Xi=le}  we obtain
\begin{equation}
\label{Xi=le_2}
\begin{aligned}
\| \Xi(\bxi,\eps) \| \le C_5^{(2)} \eps^{-1},\quad |\bxi|\le\delta_{0}(a,\mu), \quad \eps>0,
 \\
 C_5^{(2)} =  \frac{ C_3(a,\mu)}{(\mu_{-}q_{-}q_{+}^{-1}C(a))^{1/2}}
+\frac{C_4(a,\mu)}{(\mu_{-}q_{-}q_{+}^{-1}C(a))^{3/2}}.
\end{aligned}
\end{equation}
Clearly, the operator under the norm sign on the left-hand side of inequality \eqref{e2.37} is equal to
$(\A(\bxi)+\varepsilon^{2}I)^{-1}(I-F(\bxi)) +\Xi(\bxi,\eps)$,
therefore,  the desired estimate \eqref{e2.37} follows from \eqref{e2.38.3} and \eqref{Xi=le_2},
the constant $C_5(a,\mu)$ being equal to $C_5^{(1)}(a,\mu) + C_5^{(2)}(a,\mu)$.
Considering the relations in \eqref{C(a)}, \eqref{C1}, \eqref{C3}, \eqref{C33}, \eqref{C4} and \eqref{GklG2} we infer
that $C_5(a,\mu)$ depends only on the parameters $d_{0}$, $K$, $q_{-}$, $q_{+}$, $d$, $\mu_-$, $\mu_+$, $M_1(a)$, $M_2(a)$, $M_3(a)$, ${\mathcal M}(a)$, ${\mathcal C}_\pi(a)$ and
${\mathcal C}_{r(a)}(a)$.
\end{proof}

We introduce the {\it effective operator} as a self-adjoint second order elliptic differential operator
with constant coefficients in $L_2(\R^d)$ that has the form
\begin{equation}
\label{eff_op}
\A^{0}:=\frac{1}{2}\sum_{k, l=1}^{d}g_{kl}D_{k}D_{l} = - \operatorname{div} g^0 \nabla,
\quad \operatorname{Dom} \A^0 = H^2(\R^d);
\end{equation}
here the \emph{effective matrix}  $g^0$ is a $(d\times d)$-matrix with the entries  $\frac{1}{2}g_{kl}$, $k,l=1,\dots,d$,
defined in \eqref{Gkl===}. Due to  (\ref{e2.36}) the matrix  $g^0$ is positive definite.

With the help of the unitary Gelfand transform the operator $\A^0$ can be decomposed into a direct integral:
\begin{equation}
\label{direct_int}
\A^0 = {\mathcal G}^*\Bigl( \int_{\wt{\Omega}} \oplus \A^0(\bxi)\,d\bxi  \Bigr) {\mathcal G};
\end{equation}
here  $\A^0(\bxi)$ is a  self-adjoint operator in $L_2(\Omega)$ defined by the expression
$$
\A^0(\bxi) = (\D + \bxi)^* g^0 (\D+\bxi),
\quad \operatorname{Dom} \A^0 (\bxi) = \wt{H}^2(\Omega).
$$
The space $\wt{H}^2(\Omega)$ consists of  $H^2(\Omega)$ functions whose $\Z^d$-periodic extension in $\R^d$
belongs to  $H^2_{\operatorname{loc}}(\R^d)$.
The relation in \eqref{direct_int} is interpreted as follows: if $u \in \operatorname{Dom} \A^0 = H^2(\R^d)$
and $v = \A^0 u$, then
 $\mathcal{G}{u}(\bxi,\cdot) \in \operatorname{Dom} \A^0(\bxi) = \wt{H}^2(\Omega)$ and
$\mathcal{G}{v}(\bxi,\cdot) = \A^0(\bxi) \mathcal{G}{u}(\bxi,\cdot)$, $\bxi \in \wt{\Omega}$.

Next we define the operator $\A^{0}+i\langle\D,\boldsymbol{\alpha}\rangle$ with the domain
$\operatorname{Dom} \A^0$. This operator can also be represented as a direct integral
\begin{equation}
\label{direct_int1}
\A^0+i\langle\D,\boldsymbol{\alpha}\rangle = {\mathcal G}^*\Bigl( \int_{\wt{\Omega}} \oplus (\A^0(\bxi)+i\langle\D+\bxi,\boldsymbol{\alpha}\rangle)\,d\bxi  \Bigr) {\mathcal G}.
\end{equation}
The operator $\A^0(\bxi)+i\langle\D+\bxi,\boldsymbol{\alpha}\rangle$  is equipped with the domain
$\operatorname{Dom} \A^0 (\bxi)$.

 As a consequence of Theorem \ref{teor2.2} we have

\begin{theorem}\label{teor3.3}
Let conditions \eqref{h1.1}--\eqref{h1.3} be satisfied, and assume that  $M_3(a) < \infty$.
Then the following upper bound is valid:
\begin{equation}\label{e2.37_2}
\left\|(\A(\bxi)+\varepsilon^{2}I)^{-1}- (\A^0(\bxi)+i\langle\D+\bxi,\boldsymbol{\alpha}\rangle+\varepsilon^{2}I)^{-1}[q_{0}]
\right\|_{L_2(\Omega) \to L_2(\Omega)}\le
{\mathrm C}_1(a,\mu)\varepsilon^{-1},\ \ \varepsilon>0,\ \
\bxi \in \wt{\Omega};
\end{equation}
here the constant  ${\mathrm C}_1(a,\mu)$ depends only on the parameters $d_{0}$, $K$, $q_{-}$, $q_{+}$, $d$, $\mu_-$, $\mu_+$, $M_1(a)$, $M_2(a)$, $M_3(a)$, ${\mathcal M}(a)$, ${\mathcal C}_\pi(a)$ and
${\mathcal C}_{r(a)}(a)$.
\end{theorem}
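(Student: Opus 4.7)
The plan is to deduce Theorem \ref{teor3.3} from Theorem \ref{teor2.2} by unraveling how the operator $(\A^0(\bxi)+i\langle\D+\bxi,\boldsymbol{\alpha}\rangle+\eps^2 I)^{-1}[q_0]$ acts relative to the orthogonal splitting $L_2(\Omega)=\mathcal{L}\{\mathbf{1}_\Omega\}\oplus\{v\in L_2(\Omega):(v,\mathbf{1}_\Omega)=0\}$. First I would separate the parameter range into the threshold zone $|\bxi|\le\delta_0(a,\mu)$ and its complement in $\widetilde\Omega$, treating each case differently.

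In the threshold zone, the key observation is that $\mathbf{1}_\Omega$ is a simultaneous eigenfunction of $\A^0(\bxi)$ and of $i\langle\D+\bxi,\boldsymbol{\alpha}\rangle$: using $\D\mathbf{1}_\Omega=0$ one checks $\A^0(\bxi)\mathbf{1}_\Omega=\langle g^0\bxi,\bxi\rangle\mathbf{1}_\Omega$ and $i\langle\D+\bxi,\boldsymbol{\alpha}\rangle\mathbf{1}_\Omega=i\langle\bxi,\boldsymbol{\alpha}\rangle\mathbf{1}_\Omega$. Exploiting the normalization $\int_\Omega q_0\,d\x=1$ from \eqref{d1.0.0}, one writes $[q_0]u=q_0u=(u,q_0)\mathbf{1}_\Omega+\phi$ where $\phi$ has zero mean, so that applying the resolvent decouples: the constant part produces exactly $(\langle g^0\bxi,\bxi\rangle+i\langle\bxi,\boldsymbol{\alpha}\rangle+\eps^2)^{-1}Pu$, matching the scalar approximation from Theorem \ref{teor2.2}. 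On the zero-mean subspace, expansion in the Fourier basis $\{e^{2\pi i\langle\n,\x\rangle}\}_{\n\ne 0}$ shows $\A^0(\bxi)\ge\gamma\pi^2 I$ for $\bxi\in\widetilde\Omega$, where $\gamma>0$ is the smallest eigenvalue of $g^0$ (positive by \eqref{e2.36}), because $|2\pi\n+\bxi|\ge\pi$ for $\n\neq\mathbf 0$ when $\bxi\in[-\pi,\pi)^d$. Since $i\langle\D+\bxi,\boldsymbol{\alpha}\rangle$ is skew-adjoint, the real part of the quadratic form of $\A^0(\bxi)+i\langle\D+\bxi,\boldsymbol{\alpha}\rangle+\eps^2 I$ is bounded below by $(\gamma\pi^2+\eps^2)\|v\|^2$ on this subspace, and hence the resolvent restricted to it has norm at most $(\gamma\pi^2+\eps^2)^{-1}$. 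Combined with the elementary bound $\|[q_0]-P\|\le 2q_+$, this yields a uniformly bounded remainder, so an $O(\eps^{-1})$ estimate follows after the triangle inequality with Theorem \ref{teor2.2}.

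For $|\bxi|>\delta_0$, Corollary \ref{corollary1.33} bounds $\|(\A(\bxi)+\eps^2 I)^{-1}\|$ by a constant times $(\delta_0^2+\eps^2)^{-1}$; the same spectral argument, now also on the constant mode whose eigenvalue is $\langle g^0\bxi,\bxi\rangle\ge\gamma\delta_0^2$, shows $(\A^0(\bxi)+i\langle\D+\bxi,\boldsymbol{\alpha}\rangle+\eps^2 I)^{-1}$ has a bound of the same type. Since $(\delta_0^2+\eps^2)^{-1}\le(2\delta_0)^{-1}\eps^{-1}$ for every $\eps>0$, the difference is $O(\eps^{-1})$ with a constant depending on $\delta_0$.

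The main technical point will be the accretivity estimate on the zero-mean subspace for the augmented operator: one must carefully exploit that $\A^0(\bxi)$ is self-adjoint while $i\langle\D+\bxi,\boldsymbol{\alpha}\rangle$ is anti-self-adjoint, so the real part of the relevant quadratic form is controlled solely by $\A^0(\bxi)+\eps^2 I$. Once these ingredients are in place, tracking the dependence of $\mathrm{C}_1(a,\mu)$ through $C_5(a,\mu)$, the ellipticity constant $\gamma$ (controlled via \eqref{e2.36} by $\mu_-q_-C(a)$), and $q_+$ is routine bookkeeping.
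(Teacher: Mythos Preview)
Your proposal is correct and follows essentially the same route as the paper: both arguments hinge on the identity $P=P_0[q_0]$ with $P_0=(\cdot,\mathbf{1}_\Omega)\mathbf{1}_\Omega$, the observation that $\mathbf{1}_\Omega$ is an eigenfunction of $\A^0(\bxi)+i\langle\D+\bxi,\boldsymbol{\alpha}\rangle$ with eigenvalue $\langle g^0\bxi,\bxi\rangle+i\langle\boldsymbol{\alpha},\bxi\rangle$, and the Fourier bound $|2\pi\n+\bxi|\ge\pi$ for $\n\ne\mathbf{0}$ on the zero-mean subspace. The only organizational difference is that the paper first extends the scalar approximation \eqref{e2.37} from $|\bxi|\le\delta_0$ to all of $\wt\Omega$ and then replaces $P$ by the effective resolvent uniformly, whereas you keep the two zones separate throughout; also, the paper computes the resolvent bound on $(I-P_0)L_2(\Omega)$ directly from the Fourier symbol rather than via an accretivity argument, but these lead to the same estimate.
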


\begin{proof}
For $\bxi \in \wt{\Omega}$ such that  $|\bxi| \ge \delta_0(a,\mu)$ we deduce from  \eqref{e2.39} and \eqref{e2.40}
the inequalities
$$ 
\left\|(\A(\bxi)+\varepsilon^{2}I)^{-1} \right\| \le q_{-}^{-1/2}q_{+}^{1/2}(\mu_- q_{-}q_{+}^{-1}C(a))^{-1/2} (\delta_0(a,\mu))^{-1} \eps^{-1},\quad 
\eps>0, 
$$ 
$$
\left| \left( \langle g^0 \bxi, \bxi \rangle +i\langle {\boldsymbol\alpha},\bxi\rangle+ \eps^2 \right)^{-1}\right| \le q_{-}^{-1/2}q_{+}^{1/2}(\mu_- q_{-}q_{+}^{-1}C(a))^{-1/2} (\delta_0(a,\mu))^{-1} \eps^{-1},\quad 
\eps >0.
$$ 
Combining these inequalities with that of Theorem  \ref{teor2.2} and considering \eqref{FPQ} yields
\begin{equation}\label{e2.37_all}
\left\|(\A(\bxi)+\varepsilon^{2}I)^{-1}-
(\langle g^0 \bxi,\bxi \rangle +i\langle {\boldsymbol\alpha},\bxi\rangle+\varepsilon^{2})^{-1} P \right\|\le
\wt{C}_5(a,\mu)\varepsilon^{-1},\ \ \varepsilon>0,\ \ \bxi \in \wt{\Omega},
\end{equation}
where $\wt{C}_5(a,\mu) = \max \{ C_5(a,\mu),  q_{-}^{-1/2}q_{+}^{1/2}(\mu_- q_{-}q_{+}^{-1}C(a))^{-1/2} (\delta_0(a,\mu))^{-1} (1+Kd_0/2)\}$.

Next, we define the orthogonal projector $P_{0}=(\cdot,\mathbf{1}_{\Omega})\mathbf{1}_{\Omega}$ and
notice that $P=P_{0}[q_{0}]$,
and that the following obvious relation holds:
\begin{equation*}
\label{A0_P}
(\A^0(\bxi)+i\langle\D+\bxi,\boldsymbol{\alpha}\rangle) P_{0} = (\langle g^0 \bxi,\bxi \rangle+i\langle\boldsymbol{\alpha},\bxi\rangle) P_{0}.
\end{equation*}
Therefore,
\begin{equation*}
\label{res0_P}
(\A^0(\bxi) +i\langle\D+\bxi,\boldsymbol{\alpha}\rangle+ \eps^2 I)^{-1}P_{0} = \left( \langle g^0 \bxi,\bxi \rangle +
i\langle\boldsymbol{\alpha},\bxi\rangle+ \eps^2 \right)^{-1} P_{0},
\end{equation*}
and  \eqref{e2.37_all} takes the form
\begin{equation}
\label{e2.37_all_other}
\left\| (\A(\bxi)+\varepsilon^{2}I)^{-1}- (\A^0(\bxi)+i\langle\D+\bxi,\boldsymbol{\alpha}\rangle+\varepsilon^{2}I)^{-1}P_{0}[q_{0}]
 \right\|\le
\wt{C}_5(a,\mu)\varepsilon^{-1},\ \ \varepsilon>0,\ \ \bxi \in \wt{\Omega}.
\end{equation}

With the help of the discrete Fourier transform we infer that
\begin{multline*}
\label{discrete}
\left\| (\A^0(\bxi)+i\langle\D+\bxi,\boldsymbol{\alpha}\rangle+\varepsilon^{2}I)^{-1}(I-P_{0}) \right\|\\
= \sup_{0 \ne \n \in \Z^d}
\bigl| \langle g^0 (2\pi \n+\bxi), 2\pi \n +\bxi \rangle+i\langle\boldsymbol{\alpha},2\pi \n +\bxi\rangle + \eps^2
\bigr|^{-1} \le (\mu_- q_{-}C(a) \pi^2 + \eps^2)^{-1};
\end{multline*}
here we took into account \eqref{e2.36} and the fact that $|2\pi \n+\bxi| \ge \pi$ for  $\bxi \in \wt{\Omega}$
and $0 \ne \n \in \Z^d$.
Finally, we arrive at the estimate
$$
\left\| (\A^0(\bxi)+i\langle\D+\bxi,\boldsymbol{\alpha}\rangle+\varepsilon^{2}I)^{-1}(I-P_{0}) \right\| \le (\mu_- q_{-}C(a))^{-1/2} \pi^{-1} \eps^{-1},
\ \ \varepsilon>0,\ \ \bxi \in \wt{\Omega},
$$
which together with \eqref{e2.37_all_other} gives the desired estimate  \eqref{e2.37_2} with the constant
$\mathrm{C}_1(a,\mu) = \wt{C}_5(a,\mu) + q_{+}(\mu_- q_{-}C(a))^{-1/2} \pi^{-1}$.
\end{proof}

\subsection{Approximation of the resolvent  $(\A+\varepsilon^{2}I)^{-1}$}

\begin{theorem}\label{teor3.6}
Let conditions \eqref{h1.1}--\eqref{h1.3} be fulfilled, and assume that  $M_3(a) < \infty$.
Then the following upper bound is valid:
\begin{equation}\label{th3.6_1}
\left\|(\A+\varepsilon^{2}I)^{-1}- (\A^0+i\langle\D,\boldsymbol{\alpha}\rangle+\varepsilon^{2}I)^{-1}[q_{0}]
\right\|_{L_2(\R^d) \to L_2(\R^d)}\leqslant
{\mathrm C}_1(a,\mu)\varepsilon^{-1},\ \ \varepsilon>0.
\end{equation}
\end{theorem}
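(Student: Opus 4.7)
The plan is to deduce Theorem \ref{teor3.6} from Theorem \ref{teor3.3} by the standard device of passing from the direct-integral decomposition of the operators on $L_2(\R^d)$ to a uniform fiberwise estimate on $L_2(\Omega)$, then using the fact that the operator norm of a direct integral of bounded operators equals the essential supremum of the fiber norms.

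First, I would assemble the relevant direct-integral decompositions. For $\A$ this is \eqref{h1.5}, and for the effective operator with drift it is \eqref{direct_int1}. The multiplication operator $[q_0]$ is $\Z^d$-periodic, hence commutes with the integer shifts $S_\n$, so the Gelfand transform $\mathcal{G}$ partially diagonalizes it as
\begin{equation*}
[q_0] = \mathcal{G}^*\Bigl( \int_{\wt{\Omega}}^\oplus [q_0]\,d\bxi \Bigr) \mathcal{G},
\end{equation*}
where on each fiber $[q_0]$ acts as multiplication by $q_0|_\Omega$ on $L_2(\Omega)$. Since composition, addition, and inversion of bounded operators are all compatible with the direct-integral structure, the operator
\begin{equation*}
(\A + \varepsilon^2 I)^{-1} - (\A^0 + i\langle \D, \boldsymbol{\alpha}\rangle + \varepsilon^2 I)^{-1} [q_0]
\end{equation*}
decomposes as $\mathcal{G}^* \int_{\wt\Omega}^\oplus T(\bxi,\eps)\,d\bxi\, \mathcal{G}$, where
\begin{equation*}
T(\bxi,\eps) = (\A(\bxi) + \varepsilon^2 I)^{-1} - (\A^0(\bxi) + i\langle \D + \bxi, \boldsymbol{\alpha}\rangle + \varepsilon^2 I)^{-1} [q_0]
\end{equation*}
is exactly the operator whose norm is estimated in \eqref{e2.37_2}.

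Since $\mathcal{G}$ is unitary and the operator norm of a direct integral equals the essential supremum of the fiber norms, we have
\begin{equation*}
\bigl\|(\A+\varepsilon^{2}I)^{-1}- (\A^0+i\langle\D,\boldsymbol{\alpha}\rangle+\varepsilon^{2}I)^{-1}[q_{0}]\bigr\|_{L_2(\R^d)\to L_2(\R^d)} = \esssup_{\bxi \in \wt{\Omega}}\, \|T(\bxi,\eps)\|_{L_2(\Omega)\to L_2(\Omega)}.
\end{equation*}
Applying Theorem \ref{teor3.3}, which provides the bound $\|T(\bxi,\eps)\| \le \mathrm{C}_1(a,\mu) \varepsilon^{-1}$ uniformly in $\bxi\in\wt\Omega$, yields the desired inequality \eqref{th3.6_1} with the same constant.

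No genuine obstacle is expected here, since Theorem \ref{teor3.3} has already been established uniformly in $\bxi$, and all the work has been done there. The only point that requires a small justification is the compatibility of the Gelfand decomposition with the operator $\A^0 + i\langle\D,\boldsymbol{\alpha}\rangle$ on its natural domain $H^2(\R^d)$ and with the multiplication by $q_0$; both are explicitly recorded in \eqref{direct_int1} and follow from the periodicity of $q_0$ together with the standard fact that a constant-coefficient differential operator is partially diagonalized by $\mathcal{G}$ with fibers $\A^0(\bxi) + i\langle\D+\bxi,\boldsymbol\alpha\rangle$ on $\wt H^2(\Omega)$.
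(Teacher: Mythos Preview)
Your proposal is correct and follows essentially the same approach as the paper: decompose the operator in question via the Gelfand transform using \eqref{h1.5} and \eqref{direct_int1}, identify the fibers with the operators $T(\bxi,\eps)$ from Theorem~\ref{teor3.3}, and conclude by taking the supremum over $\bxi\in\wt\Omega$ of the uniform fiberwise bound \eqref{e2.37_2}. The paper's own proof is slightly terser (it does not explicitly spell out the decomposition of $[q_0]$), but the argument is the same.
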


\begin{proof}
Combining representations \eqref{h1.5} and \eqref{direct_int1} we conclude that the operator
$(\A+\varepsilon^{2}I)^{-1}- (\A^0+i\langle\D,\boldsymbol{\alpha}\rangle+\varepsilon^{2}I)^{-1}[q_{0}]$
can be decomposed, with the help of the Gelfand transform, into a direct integral over the operators
$(\A(\bxi) +\varepsilon^{2}I)^{-1}- (\A^0(\bxi)+i\langle\D+\bxi,\boldsymbol{\alpha}\rangle+\varepsilon^{2}I)^{-1}[q_{0}]$.  Therefore,
\begin{multline*}
\left\|(\A+\varepsilon^{2}I)^{-1}- (\A^0+i\langle\D,\boldsymbol{\alpha}\rangle+\varepsilon^{2}I)^{-1}[q_{0}]
\right\|_{L_2(\R^d) \to L_2(\R^d)}
\\
=
\sup_{\bxi \in \wt{\Omega}} \left\|(\A(\bxi)+\varepsilon^{2}I)^{-1}- (\A^0(\bxi)+i\langle\D+\bxi,\boldsymbol{\alpha}
\rangle+\varepsilon^{2}I)^{-1}[q_{0}]
\right\|_{L_2(\Omega) \to L_2(\Omega)},
\end{multline*}
and estimate  \eqref{th3.6_1} follows from \eqref{e2.37_2}.
\end{proof}

\section{Homogenization of nonlocal convolution type operators}

\subsection{Main results}
Here we assume that conditions  (\ref{h1.1})--(\ref{h1.3}) hold and consider
a family of convolution type operators in  $L_{2}(\R^d)$ that read
\begin{equation*}
\A_{\varepsilon}u(\x):=\varepsilon^{-d-2}\intop_{\R^d}a((\x-\y)/\varepsilon)\mu(\x/\varepsilon,\y/\varepsilon)
(u(\x)-u(\y))\,d\y,\
\ \x\in\R^d,\ \ u\in L_{2}(\R^d),\ \ \varepsilon>0.
\end{equation*}
We recall the definition of the effective operator $\A^0$ which is given in \eqref{eff_op},
and that of the effective matrix  $g^0$ with the entries $\frac{1}{2}g_{kl}$, $k,l=1,\dots,d$, introduced in \eqref{Gkl===}.
We also recall that the elements $\alpha_j$, $j=1,\ldots, d$, of the vector  $\boldsymbol{\alpha}$
are defined in \eqref{G_{j}=fin}.

Our main result is deduced from Theorem  \ref{teor3.6} by means of the unitary scaling transformation.

\begin{theorem}
\label{teor3.1}
Let conditions \eqref{h1.1}--\eqref{h1.3} be fulfilled, and assume that  $M_3(a) < \infty$.
Then the following estimate holds:
\begin{equation}\label{e3.1}
\|(\A_{\varepsilon}+I)^{-1}-(\A^{0}+\eps^{-1}\langle\boldsymbol{\alpha},\nabla\rangle+I)^{-1}[q_{0}^{\eps}]\|_{L_2(\R^d) \to L_2(\R^d)}\le
{\mathrm C}_1(a,\mu)\varepsilon,\ \ \varepsilon>0;
\end{equation}
here the constant  ${\mathrm C}_1(a,\mu)$ depends only on   $d_{0}$, $K$, $q_{-}$, $q_{+}$, $d$, $\mu_-$, $\mu_+$, $M_1(a)$, $M_2(a)$, $M_3(a)$, ${\mathcal M}(a)$, ${\mathcal C}_\pi(a)$ and
${\mathcal C}_{r(a)}(a)$.
\end{theorem}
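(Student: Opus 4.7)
The plan is to derive Theorem \ref{teor3.1} from Theorem \ref{teor3.6} via a unitary scaling transformation, which is essentially the inverse of the dilation used to pass from the microscopic to macroscopic scale. Concretely, I would introduce $T_\varepsilon \colon L_2(\R^d) \to L_2(\R^d)$ by $(T_\varepsilon u)(\x) = \varepsilon^{d/2} u(\varepsilon \x)$; this is unitary with adjoint $(T_\varepsilon^* v)(\y) = \varepsilon^{-d/2} v(\y/\varepsilon)$.

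The first step is to verify three algebraic identities. A direct change of variables in the kernel of $\A_\varepsilon$ gives $T_\varepsilon \A_\varepsilon T_\varepsilon^* = \varepsilon^{-2}\A$; writing $[q_0^\varepsilon]u(\x)=q_0(\x/\varepsilon)u(\x)$ and unwinding the conjugation yields $T_\varepsilon[q_0^\varepsilon]T_\varepsilon^* = [q_0]$; and using $T_\varepsilon \partial_j T_\varepsilon^* = \varepsilon^{-1}\partial_j$ (which extends to the constant-coefficient operator $\A^0 = -\operatorname{div} g^0 \nabla$ as $T_\varepsilon \A^0 T_\varepsilon^* = \varepsilon^{-2}\A^0$) together with $\langle\boldsymbol{\alpha},\nabla\rangle = i\langle\D,\boldsymbol{\alpha}\rangle$ gives
\[
T_\varepsilon \bigl(\A^0 + \varepsilon^{-1}\langle\boldsymbol{\alpha},\nabla\rangle + I\bigr) T_\varepsilon^* = \varepsilon^{-2}\bigl(\A^0 + i\langle\D,\boldsymbol{\alpha}\rangle + \varepsilon^2 I\bigr).
\]

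Next I would invert both sides and sandwich by $T_\varepsilon^*$, $T_\varepsilon$, obtaining
\[
(\A_\varepsilon + I)^{-1} = \varepsilon^2\, T_\varepsilon^* (\A + \varepsilon^2 I)^{-1} T_\varepsilon,
\qquad
(\A^0 + \varepsilon^{-1}\langle\boldsymbol{\alpha},\nabla\rangle + I)^{-1}[q_0^\varepsilon] = \varepsilon^2\, T_\varepsilon^* (\A^0 + i\langle\D,\boldsymbol{\alpha}\rangle + \varepsilon^2 I)^{-1}[q_0]\, T_\varepsilon,
\]
where in the second identity I use $[q_0^\varepsilon] = T_\varepsilon^*[q_0]T_\varepsilon$ to move the multiplication operator through. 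Subtracting, the difference equals $\varepsilon^2 T_\varepsilon^* D(\varepsilon) T_\varepsilon$ with $D(\varepsilon) := (\A + \varepsilon^2 I)^{-1} - (\A^0 + i\langle\D,\boldsymbol{\alpha}\rangle + \varepsilon^2 I)^{-1}[q_0]$.

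Finally, since $T_\varepsilon$ is unitary on $L_2(\R^d)$, the operator norm of the difference equals $\varepsilon^2 \|D(\varepsilon)\|_{L_2(\R^d)\to L_2(\R^d)}$. Applying Theorem \ref{teor3.6}, which provides the bound $\|D(\varepsilon)\|\le \mathrm{C}_1(a,\mu)\varepsilon^{-1}$, we obtain $\varepsilon^2\cdot\mathrm{C}_1(a,\mu)\varepsilon^{-1} = \mathrm{C}_1(a,\mu)\varepsilon$, which is exactly \eqref{e3.1}. The proof is therefore entirely mechanical once Theorem \ref{teor3.6} is available; there is no genuine obstacle at this stage, as all the hard work (threshold analysis, identification of the effective matrix $g^0$ and drift $\boldsymbol{\alpha}$, accretivity of $[q_0^{1/2}]\A(\bxi)[q_0^{-1/2}]$, and the contour-integral construction of the approximations of $F(\bxi)$ and $\A(\bxi)F(\bxi)$) has been done in Sections 1--3. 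The only minor points requiring care are keeping straight the conventions $\D = -i\nabla$ and the correct direction of the scaling so that $[q_0^\varepsilon]$ is intertwined with $[q_0]$ rather than something like $[q_0(\cdot/\varepsilon^2)]$.
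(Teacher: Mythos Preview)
Your proposal is correct and follows essentially the same approach as the paper: introduce the unitary scaling $T_\varepsilon u(\x)=\varepsilon^{d/2}u(\varepsilon\x)$, verify the conjugation identities $\A_\varepsilon = \varepsilon^{-2}T_\varepsilon^*\A T_\varepsilon$, $\A^0+\varepsilon^{-1}\langle\boldsymbol{\alpha},\nabla\rangle = \varepsilon^{-2}T_\varepsilon^*(\A^0+i\langle\D,\boldsymbol{\alpha}\rangle)T_\varepsilon$, and $[q_0^\varepsilon]=T_\varepsilon^*[q_0]T_\varepsilon$, and then reduce \eqref{e3.1} to Theorem~\ref{teor3.6} by unitarity. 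The paper's proof is line-for-line the same argument.
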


\begin{proof}
Let us introduce the scaling transformation
\begin{equation*}
T_{\varepsilon}u(\x):=\varepsilon^{d/2}u(\varepsilon \x),\ \
\x\in\R^d,\ \ u\in L_{2}(\R^d), \ \ \eps >0.
\end{equation*}
Then $\{T_{\varepsilon}\}_{\eps>0}$ is a family of unitary operators in $L_2(\mathbb R^d)$.
It is straightforward to check that
$$
\A_{\varepsilon} = \varepsilon^{-2} T_{\varepsilon}^* \A T_{\varepsilon},\quad \eps >0.
$$
Therefore,
\begin{equation}\label{e3.2}
(\A_{\varepsilon}+I)^{-1}= T_{\varepsilon}^{*}\varepsilon^{2}(\A+\varepsilon^{2}I)^{-1}T_{\varepsilon},
\ \ \varepsilon>0.
\end{equation}
A similar relation holds for the effective operator, it reads
$$
\A^0 = \varepsilon^{-2} T_{\varepsilon}^* \A^0 T_{\varepsilon},\quad \eps >0,
$$
and gives the following representation:
\begin{equation}\label{e3.2_eff}
(\A^0+\eps^{-1}i\langle\D,\boldsymbol{\alpha}\rangle+I)^{-1}[q_{0}^{\eps}]= T_{\varepsilon}^{*}\varepsilon^{2}(\A^0+i\langle\D,\boldsymbol{\alpha}\rangle+\varepsilon^{2}I)^{-1}[q_{0}]T_{\varepsilon},
\ \ \varepsilon>0.
\end{equation}
Since the operator $T_\eps$ is unitary, it immediately follows from  \eqref{e3.2} and \eqref{e3.2_eff} that
\begin{multline*}
\|(\A_{\varepsilon}+I)^{-1}-(\A^{0}+\eps^{-1}i\langle\D,\boldsymbol{\alpha}\rangle+I)^{-1}[q_{0}^{\eps}]\|_{L_2(\R^d) \to L_2(\R^d)} \\=
\eps^2 \|(\A + \varepsilon^2 I)^{-1}-(\A^{0}+i\langle\D,\boldsymbol{\alpha}\rangle+\eps^2 I)^{-1}[q_{0}]\|_{L_2(\R^d) \to L_2(\R^d)}.
\end{multline*}
This relation and the estimate obtained in Theorem \ref{teor3.6} yield the required estimate \eqref{e3.1}.
\end{proof}

\subsection{Concluding remarks    }\label{sec4.2}

$ $\\

1. The statement of Theorem \ref{teor3.1} remains valid, if we replace the periodicity lattice $\Z^d$  with an arbitrary periodic lattice in  $\R^d$. In this case the constants in the above estimates will depend not only on the coefficients   $a$ and $\mu$ but
also on the parameters of the lattice.

2. If in the formulation of Theorem \ref{teor3.1} one replaces the condition  $M_3(a)<\infty$
with the weaker condition $\int_{\R^d} |\x|^k a(\x) \,d\x < \infty$,
where $2 < k < 3$,  then the following estimate can be proved:
\begin{equation}\label{4.4-1}
\|(\A_{\varepsilon}+I)^{-1} - (\A^{0}+\eps^{-1}\langle\boldsymbol{\alpha},\nabla\rangle+I)^{-1}[q_{0}^{\eps}]
 \|_{L_2(\R^d) \to L_2(\R^d)} \le C \eps^{k-2}, \quad \eps >0.
\end{equation}
Furthermore, if in the formulation of this theorem we impose the condition  $M_{2}(a)<\infty$ instead of
$M_3(a)<\infty$, then we still can justify the convergence
\begin{equation}\label{4.4-0}
\|(\A_{\varepsilon}+I)^{-1} - (\A^{0}+\eps^{-1}\langle\boldsymbol{\alpha},\nabla\rangle+I)^{-1}[q_{0}^{\eps}]
 \|_{L_2(\R^d) \to L_2(\R^d)} \to 0, \quad \eps \to+0.
\end{equation}
Indeed, under the assumption $\int_{\R^d} |\x|^k a(\x) \,d\x < \infty$,
where $2 < k < 3$, inequality  \eqref{AF-G} takes the form
\begin{equation*}
\left\| \Psi (\bxi)  \right\| \le C_2(a,\mu) |\bxi|^k, \quad  |\bxi| \le \delta_0(a,\mu).
\end{equation*}
This yields \eqref{4.4-1}.
If in the formulation of Theorem \ref{teor3.1} we replace the condition
$M_3(a)<\infty$ with $M_2(a)<\infty$, then
formula  \eqref{AF = G2} still holds, however, the estimate of the remainder
in   \eqref{AF-G} is not valid any more, instead we have the relation
 $\|\Psi(\bxi)\|=o(|\bxi|^2)$, which leads to \eqref{4.4-0}.

3. If conditions \eqref{h1.1}--\eqref{h1.3} are satisfied and $M_2(a) < \infty$, then, in the topology of strong convergence,
\begin{equation}
\label{4.4}
(\A_{\varepsilon}+I)^{-1} - (\A^{0}+\eps^{-1}\langle\boldsymbol{\alpha},\nabla\rangle+I)^{-1}
 \to 0,  \quad \eps \to 0.
\end{equation}

\begin{proof}
The proof of convergence in \eqref{4.4} relies on the so-called "mean value property'', see, for instance, \cite{JKO}:
as $\eps\to 0$, the operator $[q_0^\eps]$ converges weakly in $L_2(\mathbb R^d)$  to the operator of multiplication by
the average of $q_0(\x)$, that is by $1$.
Let us show that
\begin{equation}
\label{4.5}
\|  (\A^{0}+\eps^{-1}\langle\boldsymbol{\alpha},\nabla\rangle+I)^{-1} [q_0^\eps] u -
 (\A^{0}+\eps^{-1}\langle\boldsymbol{\alpha},\nabla\rangle+I)^{-1} u \|_{L_2(\R^d)}
 \to 0,  \quad \eps \to 0, \quad u \in L_2(\R^d).
\end{equation}
Since
$$
\| (\A^{0}+\eps^{-1}\langle\boldsymbol{\alpha},\nabla\rangle+I)^{-1}  \|_{L_2 \to L_2} \le 1,
\quad \| [q_0^\eps] \|_{L_2 \to L_2} \le q_+,
$$
it is sufficient to prove the convergence in  \eqref{4.5} only for $u \in C_0^\infty(\R^d)$.
For an arbitrary $u \in C_0^\infty(\R^d)$ denote by $B$ the ball that contains the support of $u$. Then,
the following relation holds:
\begin{equation}
\label{4.6}
\begin{aligned}
&(\A^{0}+\eps^{-1}\langle\boldsymbol{\alpha},\nabla\rangle+I)^{-1} [q_0^\eps] u -
 (\A^{0}+\eps^{-1}\langle\boldsymbol{\alpha},\nabla\rangle+I)^{-1} u
 \\
 &\quad =  (\A^{0}+I)(\A^{0}+\eps^{-1}\langle\boldsymbol{\alpha},\nabla\rangle+I)^{-1}  (\A^0 +I)^{-1} [\mathbf{1}_B
 (q_0^\eps -1)] u.
 \end{aligned}
\end{equation}
Since the norm of the operator $(\A^{0}+I)(\A^{0}+\eps^{-1}\langle\boldsymbol{\alpha},\nabla\rangle+I)^{-1}$
is not greater than one, the operator $ (\A^0 +I)^{-1} [\mathbf{1}_B]$ is compact in  $L_2(\R^d)$,
and $(q_0^\eps -1) u \to 0$ weakly in $L_2(\R^d)$, then the $L_2$ norm of the expression on the right-hand side of
\eqref{4.6} tends to zero as $\eps \to 0$.

Combining \eqref{4.4-0} and \eqref{4.5} we obtain  \eqref{4.4}.
\end{proof}

\section{Appendix I. Stability of an isolated eigenvalue}
A proof of the following statement can be found in  \cite[Sec. I.4.6]{K}.

\begin{proposition}\label{rizprp25}
Let  $P_{1}$ and $P_{2}$ be projectors in $\H$, and assume that $\|P_{1}-P_{2}\|<1$.
Then  $\operatorname{rank}P_{1}= \operatorname{rank}P_{2}$.
\end{proposition}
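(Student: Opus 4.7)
The plan is to prove this by exhibiting an invertible operator $S$ that intertwines $P_1$ and $P_2$, so that $P_2 = S P_1 S^{-1}$; similar operators have the same rank. To construct $S$ I would introduce the auxiliary operators
\[
R := P_1 - P_2, \qquad S := I - P_1 - P_2.
\]

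First I would establish the algebraic identity $S^2 + R^2 = I$. Expanding both squares using $P_j^2 = P_j$, one gets
\[
R^2 = P_1 + P_2 - P_1 P_2 - P_2 P_1, \qquad S^2 = I - 2(P_1+P_2) + (P_1+P_2)^2 = I - P_1 - P_2 + P_1 P_2 + P_2 P_1,
\]
and the sum collapses to $I$. Since $\|R\|<1$ by hypothesis, $\|R^2\|<1$, so $I - R^2 = S^2$ is invertible (Neumann series); consequently $S$ itself is invertible.

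Next I would verify the intertwining relation $S P_1 = P_2 S$ by direct computation using $P_j^2 = P_j$:
\[
S P_1 = (I - P_1 - P_2) P_1 = P_1 - P_1 - P_2 P_1 = -P_2 P_1, \qquad P_2 S = P_2(I - P_1 - P_2) = P_2 - P_2 P_1 - P_2 = -P_2 P_1.
\]
Multiplying $S P_1 = P_2 S$ on the right by $S^{-1}$ gives $P_2 = S P_1 S^{-1}$, which shows that $P_1$ and $P_2$ are similar, hence $\operatorname{rank} P_1 = \operatorname{rank} P_2$.

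The only genuinely delicate step is the identity $S^2 + R^2 = I$, because $P_1$ and $P_2$ are not assumed to commute and the cross terms $P_1 P_2$ and $P_2 P_1$ must be tracked separately; the cancellation happens precisely because both appear with opposite signs in $R^2$ and $S^2$. All remaining steps are one-line computations.
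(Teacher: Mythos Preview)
Your proof is correct. The paper does not actually prove this proposition; it simply refers the reader to Kato \cite[Sec.~I.4.6]{K}, and the argument you give is essentially the classical one found there (Kato uses a closely related intertwining operator built from the projectors and the same identity $S^2 = I - (P_1-P_2)^2$).
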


Given a bounded linear operator $A$ in a Hilbert space $\H$, assume that $\lambda_{0}$ is a simple eigenvalue of $A$,
so that the Riesz projector of $A$ that corresponds to the point $\lambda_0$ has rank $1$.
Then, denoting by $d_{0}$ the distance from $\lambda_{0}$ to the remaining part of the spectrum of $A$, it is straightforward to observe that
the resolvent $(A-\zeta I)^{-1}$ is holomorphic in the punctured disc $B_{d_{0}}(\lambda_{0})\setminus \{\lambda_{0}\}$,
and thus the following quantity is finite:
$$
K :=\max\limits_{\frac{d_{0}}{3}\le|\zeta-\lambda_{0}|\le\frac{2d_{0}}{3}}\|(A-\zeta I)^{-1}\| <\infty.
$$
\begin{proposition}\label{rizprp3}
Let $B$ be a bounded linear operator in $\H$, and assume that the norm $\|A-B\|$  is so small that
the following estimates hold:
\begin{equation}\label{rizprp3.1}
 K \|A-B\|<1,\ \ \frac{d_{0}}{3}\cdot\frac{K^{2}\|A-B\|}{1- K\|A-B\|}<1.
\end{equation}
Then

{\rm 1)} the annulus $\{\zeta\in\C:\frac{d_{0}}{3}\le|\zeta-\lambda_{0}|\le\frac{2d_{0}}{3}\}$ and the spectrum  $\sigma(B)$ do not intersect\textup{;}

{\rm 2)}the spectrum of the operator $B$ inside the disc   $B_{d_0/3}(\lambda_0)$
consists of one simple eigenvalue, that is the Riesz projector corresponding to the spectrum of $B$ in the disc
$B_{d_0/3}(\lambda_0)$ has rank $1$.
\end{proposition}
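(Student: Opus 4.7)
\textbf{Proof proposal for Proposition \ref{rizprp3}.} The plan is to carry out a standard Neumann series perturbation argument on the annulus separating $\lambda_0$ from the rest of $\sigma(A)$, and then to compare the Riesz projectors of $A$ and $B$ by contour integration, invoking Proposition \ref{rizprp25}. Both conditions in \eqref{rizprp3.1} will correspond precisely to the two ingredients of this argument.

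For statement 1), I will fix $\zeta$ with $d_0/3 \le |\zeta - \lambda_0| \le 2d_0/3$ and write
$$B - \zeta I = (A-\zeta I)\bigl(I - (A-\zeta I)^{-1}(A-B)\bigr).$$
By the definition of $K$ we have $\|(A-\zeta I)^{-1}\| \le K$ on the annulus, hence $\|(A-\zeta I)^{-1}(A-B)\| \le K\|A-B\| < 1$ by the first inequality in \eqref{rizprp3.1}. The Neumann series therefore converges, $B-\zeta I$ is invertible, and
$$(B-\zeta I)^{-1} - (A-\zeta I)^{-1} = \sum_{n=1}^{\infty}\bigl((A-\zeta I)^{-1}(A-B)\bigr)^{n}(A-\zeta I)^{-1},$$
which yields the bound
$$\bigl\|(B-\zeta I)^{-1} - (A-\zeta I)^{-1}\bigr\| \le \frac{K^{2}\|A-B\|}{1-K\|A-B\|}$$
uniformly on the annulus. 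This proves 1).

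For statement 2), I will choose the contour $\Gamma_0 := \{\zeta \in \C : |\zeta - \lambda_0| = d_0/3\}$, which lies in the annulus just treated, and define the Riesz projectors
$$P_A = -\frac{1}{2\pi i}\oint_{\Gamma_0}(A-\zeta I)^{-1}\,d\zeta, \qquad P_B = -\frac{1}{2\pi i}\oint_{\Gamma_0}(B-\zeta I)^{-1}\,d\zeta.$$
Since $\Gamma_0$ encloses only $\lambda_0$ in $\sigma(A)$ and $\lambda_0$ is a simple eigenvalue, $\operatorname{rank} P_A = 1$. The length of $\Gamma_0$ equals $2\pi d_0/3$, so integrating the resolvent bound from step 1 gives
$$\|P_A - P_B\| \le \frac{1}{2\pi}\cdot \frac{2\pi d_0}{3}\cdot\frac{K^{2}\|A-B\|}{1-K\|A-B\|} = \frac{d_0}{3}\cdot\frac{K^{2}\|A-B\|}{1-K\|A-B\|} < 1,$$
the final strict inequality being exactly the second assumption in \eqref{rizprp3.1}. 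Proposition \ref{rizprp25} then yields $\operatorname{rank} P_B = \operatorname{rank} P_A = 1$, meaning that the part of $\sigma(B)$ enclosed by $\Gamma_0$, i.e.\ lying in $B_{d_0/3}(\lambda_0)$, is a single simple eigenvalue.

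The argument is essentially routine; the only point requiring care is the matching of the contour radius to the hypothesis. Choosing $\Gamma_0$ of radius $d_0/3$ (rather than $d_0/2$ or $2d_0/3$, all of which are admissible for the invertibility step) is what produces the factor $d_0/3$ on the right-hand side and thus makes the second condition in \eqref{rizprp3.1} both necessary and sufficient to close the rank-comparison step. No genuine obstacle is expected.
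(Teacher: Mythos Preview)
Your proof is correct and follows essentially the same route as the paper: Neumann series on the annulus for statement 1), then the resolvent difference bound integrated over the circle $|\zeta-\lambda_0|=d_0/3$ to get $\|P_A-P_B\|<1$, followed by Proposition \ref{rizprp25}. The paper's argument is identical in structure and in the choice of contour radius.
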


\begin{remark}\label{rizremark}
Relations in  \eqref{rizprp3.1} are equivalent to the inequality
$
\|A-B\|<3(d_{0}K^{2}+3K)^{-1}.
$
\end{remark}

\begin{proof}
Under conditions \eqref{rizprp3.1}, any $\zeta\in\C$ such that $\frac{d_{0}}{3}\le|\zeta-\lambda_{0}|\le\frac{2d_{0}}{3}$
belongs to the resolvent set of the operator $B$; moreover, the resolvent $(B-\zeta I)^{-1}$ admits the representation
$$
(B-\zeta I)^{-1}=\sum_{n=0}^{\infty}((A-\zeta I)^{-1}(A-B))^{n}(A-\zeta I)^{-1}.
$$
This yields the first statement of Proposition.
To prove the second one we use the estimate
\begin{equation}\label{rizprp3.2}
\|(B-\zeta I)^{-1}-(A-\zeta I)^{-1}\|\le \frac{K^{2}\|A-B\|}{1-K\|A-B\|},\ \ |\zeta-\lambda_{0}|=d_{0}/3.
\end{equation}
Denote by $P_{A}$ and $P_{B}$ the Riesz projectors of the operators $A$ and $B$ that correspond
to the parts of their spectra located inside the disc  $B_{d_{0}/3}(\lambda_{0})$.
Combining the formula
$$
P_{A}-P_{B}=\frac{-1}{2\pi i}\oint_{|\zeta-\lambda_{0}|=d_{0}/3}((A-\zeta I)^{-1}-(B-\zeta I)^{-1})d\zeta,
$$
inequality \eqref{rizprp3.2} and condition  \eqref{rizprp3.1},  we conclude that
$
\|P_{A}-P_{B}\|<1;
$
this leads to the second statement of Proposition \ref{rizprp3}, see Proposition \ref{rizprp25}.
\end{proof}

\section{Appendix II. Some properties of operator  $\mathbf{G}$}\label{Sec5}

\subsection{Formulation of the result}
Here we assume that the functions $a(\z)$, $\z\in\mathbb{R}^{d}$, and $\mu(\x,\y)$, $\x,\y\in\mathbb{R}^{d}$, satisfy conditions \eqref{h1.1} and \eqref{h1.2}, \eqref{h1.3}, respectively.

Consider the function
\begin{equation}
\label{5.1}
\wt{a}(\z) := \wt{a}(\mathbf{0},\z) = \sum_{\n \in \Z^d} a(\z +\n), \quad \z \in \R^d,
\end{equation}
see \eqref{a_tilde} for the definition of $\wt{a}(\bxi,\z)$. From \eqref{h1.1} and \eqref{5.1} it follows that
 $\wt{a}(\z)$ is a non-negative  $\Z^d$-periodic  function, $\wt{a} \in L_1(\Omega)$,  and
\begin{equation}
\label{5.2}
 \| \wt{a} \|_{L_1(\Omega)} = \| a \|_{L_1(\R^d)} >0.
 \end{equation}

Recalling the definitions of $\A(\mathbf{\bxi})$ and $\B(\mathbf{\bxi})$ in \eqref{A_xi}, \eqref{B_xi},
we introduce in $L_2(\Omega)$ the following bounded operators: $\mathbf{A}:= \A(\mathbf{0})$ and
$\mathbf{B}:= \B(\mathbf{0})$. Then
\begin{equation*}
\label{A(0)}
\begin{aligned}
\mathbf{A} u (\x) &= p(\x) u(\x) - \mathbf{B} u(\x),
\\
\mathbf{B}u(\x)  &= \intop_{\Omega}\widetilde a(\x-\y)\mu(\x,\y)u(\y)\,d\y,\ \ u\in L_{2}(\Omega),
\end{aligned}
\end{equation*}
with
$$
p(\x) = \intop_\Omega \widetilde a(\x-\y)\mu(\x,\y)\,d\y,
$$
see \eqref{e1.9}. 
According to \eqref{h1.4} the function $p(\x)$ satisfies the estimates
\begin{equation*}
\label{p_est}
  \mu_- \| \wt{a}\|_{L_1(\Omega)} \le p(\x) \le \mu_+ \| \wt{a}\|_{L_1(\Omega)},\quad \x \in \Omega.
\end{equation*}

As was shown in \cite[Corollary 4.2]{PSlSuZh}, the operators $\mathbf {B}$ and $\mathbf{B}^{*}$ are compact,
so is the operator $\mathbf{G}=\mathbf{B}^{*}[p^{-1}]$.  Let us examine the properties of the operator
$\mathbf{G}$ in more detail.
It is clear that $\mathbf{G}^{*}\mathbf{1}_{\Omega}=\mathbf{1}_{\Omega}$, and thus,
$1\in\sigma(\mathbf{G})\cap\sigma(\mathbf{G}^{*})$.
Since the operators $\mathbf{G}$ and $\mathbf{G}^{*}$ are compact, there exists $r>0$ such that
$$
\sigma(\mathbf{G})\cap B_{2r}(1)=\{1\},\ \ \sigma(\mathbf{G}^{*})\cap B_{2r}(1)=\{1\}.
$$
Consequently, the Riesz projectors of the operators $\mathbf{G}$ and $\mathbf{G}^{*}$ that correspond to the point
$1$ can be calculated as follows:
\begin{equation*}
\mathcal{P}=\frac{-1}{2\pi i}\oint_{|\zeta-1|=r}(\mathbf{G}-\zeta I)^{-1}d\,\zeta,\ \
\mathcal{P}^{*}=\frac{-1}{2\pi i}\oint_{|\zeta-1|=r}(\mathbf{G}^{*}-\zeta I)^{-1}d\,\zeta.
\end{equation*}

\begin{theorem}\label{theorem1.5}
The ranks of the  Riesz projectors $\mathcal{P}$ and $\mathcal{P}^{*}$ are equal to  $1$\textup{;}
the kernels of $\mathbf{G}^{*}-I$ and $\mathbf{G}-I$ are defined by
\begin{equation*}\label{f0}
\operatorname{Ker}(\mathbf{G}^{*}-I)=\mathcal{L}\{\mathbf{1}_{\Omega}\},\quad
\operatorname{Ker}(\mathbf{G}-I)=\mathcal{L}\{\psi_{0}\},
\end{equation*}
where the function $\psi_{0}\in L_{2}(\Omega)$ satisfies the relations
\begin{equation*}\label{f1}
0<\psi_{-}\le\psi_{0}(\z)\le\psi_{+}<+\infty\ \ \hbox{\rm for all } \z\in\Omega;\ \ \int_{\Omega}\psi_{0}(\z)\,d\z=1.
\end{equation*}
\end{theorem}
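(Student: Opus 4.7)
The plan is to apply Perron--Frobenius / Krein--Rutman theory to the compact positive operator $\mathbf{G}$. The starting observation is that $\mathbf{G}$ is an integral operator on $L_2(\Omega)$ with non-negative kernel
\[
k(\x,\y) = \widetilde{a}(\y-\x)\,\mu(\y,\x)\,p(\y)^{-1},
\]
so $\mathbf{G}$ preserves the cone of non-negative functions. The adjoint $\mathbf{G}^*$ has kernel $k(\y,\x)$, whose row integral $\int_\Omega k(\y,\x)\,d\y = p(\x)/p(\x) = 1$ yields simultaneously the fixed-point identity $\mathbf{G}^*\mathbf{1}_\Omega = \mathbf{1}_\Omega$ and the bound $\|\mathbf{G}^*\|_{L_\infty(\Omega)\to L_\infty(\Omega)} = 1$, hence $\rho(\mathbf{G}) = \rho(\mathbf{G}^*) = 1$. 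In particular $\operatorname{Ker}(\mathbf{G}^*-I)\supseteq\mathcal{L}\{\mathbf{1}_\Omega\}$.

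With $1$ identified as the spectral radius of a compact positive operator, the Krein--Rutman theorem furnishes a non-zero eigenfunction $\psi_0\ge 0$ with $\mathbf{G}\psi_0 = \psi_0$. Substituting the fixed-point equation into itself once and using $\widetilde{a}\in L_1(\Omega)$ with $\mu,p^{-1}\in L_\infty$ yields $\psi_0\in L_\infty(\Omega)$, so after the normalisation $\int_\Omega\psi_0\,d\x = 1$ one obtains a quantitative upper bound $\psi_0\le\psi_+$.

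The main obstacle is the strict lower bound $\psi_0\ge\psi_->0$ and the passage from existence of one eigenfunction to the rank-one assertion. I would prove strict positivity by an irreducibility argument: if $E:=\{\psi_0=0\}$ had positive measure, the equation $\psi_0=\mathbf{G}\psi_0$ and non-negativity of the integrand would force $\widetilde{a}(\y-\x)\psi_0(\y)=0$ for a.e.\ $(\x,\y)\in E\times\Omega$, making $E$ invariant under the reachability relation induced by the periodised support of $a$. Since $\operatorname{mes}\{a\ne 0\}>0$ implies by a Steinhaus-type density argument that iterated Minkowski sums of $\operatorname{supp}\widetilde{a}$ cover $\Omega$ modulo $\Z^d$ in finitely many steps, this is a contradiction. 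The same irreducibility argument applied to an arbitrary non-negative non-zero fixed point of $\mathbf{G}$ shows it must be strictly positive, and this yields geometric simplicity of the eigenvalue $1$ by the standard Perron trick: for real $\psi\in\operatorname{Ker}(\mathbf{G}-I)$, choose $s\in\mathbb{R}$ so that $\psi_0-s\psi\ge 0$ but vanishes at some point; strict positivity then forces $\psi_0-s\psi\equiv 0$, and the same argument on $\operatorname{Im}\psi$ gives $\operatorname{Ker}(\mathbf{G}-I)=\mathcal{L}\{\psi_0\}$. The analogous argument applied to $\mathbf{G}^*$ with eigenfunction $\mathbf{1}_\Omega$ gives $\operatorname{Ker}(\mathbf{G}^*-I)=\mathcal{L}\{\mathbf{1}_\Omega\}$. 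Finally, algebraic simplicity $\operatorname{rank}\mathcal{P}=\operatorname{rank}\mathcal{P}^*=1$ follows from the duality test: if $(\mathbf{G}-I)\varphi=c\psi_0$ with $c\ne 0$, then $0=\langle\varphi,(\mathbf{G}^*-I)\mathbf{1}_\Omega\rangle=\langle(\mathbf{G}-I)\varphi,\mathbf{1}_\Omega\rangle=c\int_\Omega\psi_0\,d\x=c$, a contradiction.
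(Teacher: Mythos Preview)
Your overall Krein--Rutman strategy is sound; the paper itself notes that this is the route of \cite{PiaZhi19} and deliberately opts for a self-contained argument instead. Your duality test for algebraic simplicity is correct and in fact cleaner than the paper's, which first establishes a separate sign lemma for solutions of $(\mathbf{G}-I)\psi_1=\psi_0$ before concluding.

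The genuine gap is the upper bound $\psi_0\le\psi_+<\infty$. The claim that ``substituting the fixed-point equation into itself once and using $\wt a\in L_1(\Omega)$ with $\mu,p^{-1}\in L_\infty$'' yields $\psi_0\in L_\infty$ does not hold: the kernel of $\mathbf{G}$ is only in $L_1$ in each variable, so neither $\mathbf{G}$ nor any iterate $\mathbf{G}^N$ (whose kernel is controlled by the $N$-fold self-convolution of $\wt a$) maps $L_2$ into $L_\infty$ under the sole hypothesis $a\in L_1(\R^d)$. For $a(\x)=|\x|^{-\alpha}\mathbf{1}_{\{|\x|<1\}}$ with $\alpha$ close to $d$, every self-convolution remains unbounded near the origin. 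This is why the paper devotes four subsections to the bound: it truncates $\wt a$ to bounded $\wt a_N\le N$, shows that the approximate eigenfunctions $\psi_N$ converge to $\psi_0$ in $L_2$ via stability of the Riesz projector, and then obtains a uniform-in-$N$ estimate $\|\psi_N\|_{L_\infty}\le C(\wt a)$ by a level-set splitting (split $\Omega$ at the threshold $\|\psi_N\|_{L_\infty}^{1/2}$) that exploits the normalisation $\int_\Omega\psi_N=1$ together with the absolute continuity of $\int_{\mathcal O}\wt a$ as $\operatorname{mes}\mathcal O\to0$.

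A smaller issue: your Perron trick for geometric simplicity (``choose $s\in\R$ so that $\psi_0-s\psi\ge0$ but vanishes at some point'') tacitly assumes an arbitrary real $\psi\in\operatorname{Ker}(\mathbf{G}-I)$ is bounded, which you have not shown; if $\psi$ is unbounded above and below, only $s=0$ gives $\psi_0-s\psi\ge0$. The paper avoids this by proving directly that every nonzero real kernel element is sign-definite and bounded away from zero (using that some iterated kernel $g_{\hat N}$ satisfies $g_{\hat N}\ge\gamma>0$, a fact borrowed from \cite{PiaZhi19}), and then deduces one-dimensionality by an orthogonality argument. Your Steinhaus argument for the lower bound is essentially the right idea, but note that covering $\Omega$ by iterated supports gives only $\psi_0>0$ a.e.; the \emph{uniform} bound $\psi_0\ge\psi_->0$ needs the iterated kernel itself to be bounded below, which does follow (since $\mathbf{1}_A*\mathbf{1}_A$ is continuous and positive at $0$ for $A=\{\wt a\ge\varepsilon\}$) but should be made explicit.
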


\subsection{}  We begin the proof by obtaining several auxiliary results.
In what follows the notation $(\cdot,\cdot)$ stands for the inner product in  $L_{2}(\Omega)$.

\begin{lemma}\label{lemmaf1.1}
Each non-trivial real function $\psi\in\operatorname{Ker}(\mathbf{G}-I)$ does not change sign and is separated
from zero.
\end{lemma}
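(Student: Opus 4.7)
The plan is to exploit that $\mathbf{G}$ has a non-negative integral kernel $K(\x,\y) = \widetilde{a}(\y-\x)\mu(\y,\x)/p(\y)$ and the duality $\mathbf{G}^*\mathbf{1}_{\Omega} = \mathbf{1}_{\Omega}$ noted just before the theorem. First I would show that $\psi^{\pm} := \max(\pm\psi, 0)$ are themselves in $\operatorname{Ker}(\mathbf{G} - I)$: the triangle inequality yields $|\psi| = |\mathbf{G}\psi| \le \mathbf{G}|\psi|$ pointwise a.e., and pairing with $\mathbf{1}_{\Omega}$ using $\mathbf{G}^*\mathbf{1}_{\Omega} = \mathbf{1}_{\Omega}$ gives $\int_{\Omega}(\mathbf{G}|\psi| - |\psi|)\,d\x = 0$, forcing the pointwise inequality to be an equality a.e. Subtracting $\mathbf{G}\psi = \psi$ from $\mathbf{G}|\psi| = |\psi|$ then yields $\mathbf{G}\psi^{\pm} = \psi^{\pm}$.

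To exclude a sign change, suppose toward a contradiction that both $\psi^{\pm}$ are non-trivial, and let $\Omega^{\pm} := \{\psi^{\pm} > 0\}$; these are disjoint sets of positive measure covering $\Omega$ modulo a null set. For a.e.\ $\x \in \Omega^{-}$ the identity $0 = \psi^{+}(\x) = \int_{\Omega} K(\x,\y)\psi^{+}(\y)\,d\y$, combined with the pointwise lower bounds $\mu \ge \mu_-$ and $p^{-1} \ge (\mu_+\|a\|_{L_1})^{-1}$, forces $\widetilde{a}(\y-\x) = 0$ for a.e.\ $\y \in \Omega^{+}$. By Fubini this reads $\Omega^{-} + S \subseteq \Omega^{-}$ modulo null set on the torus $\Omega$, where $S := \{\widetilde{a} > 0\}$ has positive Lebesgue measure by \eqref{5.2}. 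Kneser's sum-set inequality on the compact connected abelian group $\mathbb{R}^d/\mathbb{Z}^d$ then gives $|\Omega^{-} + S| \ge \min(|\Omega^{-}| + |S|,\,1) > |\Omega^{-}|$, a contradiction. Hence $\psi$ is single-signed.

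Taking $\psi \ge 0$ without loss of generality, the same sum-set argument applied to $E := \{\psi = 0\}$ in place of $\Omega^{-}$ shows $|E| = 0$, so $\psi > 0$ a.e. For the uniform lower bound I iterate: $\psi = \mathbf{G}^N\psi$ for every $N$, and the iterated kernel is bounded below by $c^N \widetilde{a}^{*N}(\y-\x)$, where $\widetilde{a}^{*N}$ denotes the $N$-fold torus convolution, supported on the Minkowski sum $NS$. By Kneser, $NS$ has full measure once $N \ge 1/|S|$; combined with $\int_{\Omega}\widetilde{a}^{*N}\,d\z = \|a\|_{L_1}^N$ and non-negativity, a quantitative sum-set argument produces a Doeblin-type minorization $K_N(\x,\y) \ge c_0 \mathbf{1}_{F}(\y)$ on some set $F$ with $|F| > 0$, from which $\psi(\x) = \mathbf{G}^N\psi(\x) \ge c_0 \int_F \psi\,d\y > 0$ uniformly in $\x$.

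The main obstacle is this last step: passing from the qualitative statement $NS = \Omega$ (modulo null) to a quantitative Doeblin minorization on $K_N$ rather than merely a.e.\ positivity of $\widetilde{a}^{*N}$ requires a careful refinement of Kneser's theorem so that positivity can be converted into a genuine uniform lower bound, which is the subtlest point of the argument.
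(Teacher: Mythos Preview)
Your first step---showing $\mathbf{G}|\psi|=|\psi|$ via the pointwise inequality $|\mathbf{G}\psi|\le \mathbf{G}|\psi|$ together with $\mathbf{G}^*\mathbf{1}_\Omega=\mathbf{1}_\Omega$, and hence $\mathbf{G}\psi^{\pm}=\psi^{\pm}$---is clean and correct, and is \emph{not} how the paper proceeds. One small repair: you assert that $\Omega^+\cup\Omega^-$ covers $\Omega$ modulo a null set, but at this stage the zero set $\{\psi=0\}$ may well have positive measure. The sum--set argument should therefore be run with $E:=(\Omega^+)^c$ rather than $\Omega^-$: from $\psi^+(\x)=0$ on $E$ one gets $E+S\subseteq E$ (mod null), and since $0<|E|<1$ and $|S|>0$ the Steinhaus/Kneser argument on $\mathbb{T}^d$ yields the contradiction.

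The genuine gap is in the ``separated from zero'' part, and you have correctly located it. What you need is precisely the uniform minorisation $F_{\hat N}(\z)\ge\gamma>0$ for some $\hat N$; note that your proposed bound $K_N(\x,\y)\ge c_0\mathbf{1}_F(\y)$ uniformly in $\x$, with $K_N$ dominated by a function of $\y-\x$, already forces $F_N\ge c_0/c^N$ everywhere. Kneser only gives $|NS|=1$, i.e.\ a.e.\ positivity of $\widetilde a^{*N}$, which is not enough, and there is no ``quantitative Kneser'' that upgrades this to a uniform lower bound. The standard route is: pick $c>0$ and a set $A$ of positive measure with $\widetilde a\ge c\,\mathbf{1}_A$; then $\widetilde a^{*2}\ge c^2\,\mathbf{1}_A*\mathbf{1}_A$, which is \emph{continuous} (convolution of $L_2$ functions) and strictly positive on the open set $A-(-A)$ given by Steinhaus. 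Iterating this open set finitely many times covers the compact connected torus, giving the uniform bound. This is exactly the content of \cite[Lemma~4.2]{PiaZhi19}, which the paper simply quotes.

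The paper's proof is organised differently: it \emph{starts} from the Doeblin minorisation $g_{\hat N}(\x,\y)\ge\gamma'>0$ and uses it for both conclusions at once, deriving the sign property from the strict inequality $(\mathbf{G}^{\hat N}\psi^+-\psi^+,\mathbf{1}_\Omega)>0$ whenever $\psi^\pm$ are both nontrivial, and the lower bound from $|\psi|=\mathbf{G}^{\hat N}|\psi|\ge\gamma'\int_\Omega|\psi|$. Your route avoids the minorisation for the sign part but still needs it for the lower bound, so there is no net economy; once the minorisation is in hand, the paper's direct argument is shorter.
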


\begin{proof}
If $\psi\in L_2(\mathbb R^d)$ is a non-zero real function from $\operatorname{Ker}(\mathbf{G}-I)$, then for any
$N\in\mathbb{N}$  the following identity holds:
\begin{equation}\label{f2}
\mathbf{G}^{N}\psi=\psi.
\end{equation}
Since $\mathbf{G}$ is an integral operator with the kernel
\begin{equation*}
g_{1}(\x,\y):=\wt a(\y-\x) \frac{\mu(\y,\x)}{p(\y)},\ \ \x,\y\in\Omega,
\end{equation*}
then $\mathbf{G}^{N}$  is also an integral operator whose kernel $g_{N}(\x,\y)$ satisfies the estimate
\begin{equation*}
\left(\frac{\mu_{-}}{\mu_{+}\|\wt a\|_{L_{1}(\Omega)}}\right)^{N}F_{N}(\y-\x)\leqslant
g_{N}(\x,\y)\leqslant
\left(\frac{\mu_{+}}{\mu_{-}\|\wt a\|_{L_{1}(\Omega)}}\right)^{N}F_{N}(\y-\x),\ \ \x,\y\in\Omega,
\end{equation*}
where $F_{1}(\z):=\wt a(\z)$ and $F_{N}(\z):=\int_{\Omega}F_{1}(\mathbf{t})F_{N-1}(\z-\mathbf{t})\,d\mathbf{t}$
for $N>1$.
By direct inspection, we see that $F_{N}(\z)$ is a non-negative $\mathbb{Z}^{d}$-periodic function,  and $\|F_{N}\|_{L_{1}(\Omega)}=\|\wt a\|_{L_{1}(\Omega)}^{N}>0$.
It was shown in \cite[Lemma 4.2]{PiaZhi19}  that there exist $\hat N\in\mathbb{N}$ and $\gamma>0$ such that
$F_{\hat N}(\z)\ge\gamma$ for all $\z\in\Omega$.
Then the kernel $g_{\hat N}(\x,\y)$ is positive and separated from zero.

We fix such a representative of the function $\psi(y)$ for which equality \eqref{f2} is satisfied for all $y\in\Omega$.
Letting $\psi^+(y)=\max\{\psi(y),0\}$ and $\psi^-(y)=-\min\{\psi(y),0\}$, one can decompose the function $\psi$ as
$\psi\!=\!\psi^{+}\!-\!\psi^{-}$. Then identity \eqref{f2} takes the form
\begin{equation*}
\mathbf{G}^{N}\psi^{+}-\psi^{+}=\mathbf{G}^{N}\psi^{-}-\psi^{-}.
\end{equation*}
Assume that both $\psi^-$ and $\psi^+$ are non-zero elements of $L_2(\Omega)$.
For any $\x\in\Omega$ such that  $\psi^{+}(\x)>0$ we have  $\psi^{-}(\x)=0$  and
\begin{multline}\label{f3}
\mathbf{G}^{\hat N}\psi^{+}(\x)-\psi^{+}(\x)=\mathbf{G}^{\hat N}\psi^{-}(\x)=\int_{\Omega}g_{\hat N}(\x,\y)\psi^{-}(\y)\,d\y\ge\\\ge
\left(\frac{\mu_{-}}{\mu_{+}\|\wt a\|_{L_{1}(\Omega)}}\right)^{\hat N}\gamma
\int_{\Omega}\psi^{-}(\y)\,d\y=:c_{1}>0.
\end{multline}
If $\x\in\Omega$ is such that  $\psi^{+}(\x)=0$,  then
\begin{multline}\label{f4}
\mathbf{G}^{\hat N}\psi^{+}(\x)-\psi^{+}(\x)= \mathbf{G}^{\hat N}\psi^{+}(\x)= \int_{\Omega}g_{\hat N}(\x,\y)\psi^{+}(\y)\,d\y\geqslant\\
\geqslant
\left(\frac{\mu_{-}}{\mu_{+}\|\wt a\|_{L_{1}(\Omega)}}\right)^{\hat N}\gamma
\int_{\Omega}\psi^{+}(\y)\,d\y=:c_{2}>0.
\end{multline}
From  \eqref{f3} and  \eqref{f4} it follows that
\begin{multline*}
0=\left(\psi^{+},\left(\mathbf{G}^{*}\right)^{\hat N}\mathbf{1}_{\Omega}-\mathbf{1}_{\Omega}\right)= \left(\mathbf{G}^{\hat N}\psi^{+}-\psi^{+},\mathbf{1}_{\Omega}\right)=\\
=\int_{\Omega}(\mathbf{G}^{\hat N}\psi^{+}(\y)-\psi^{+}(\y))\,d\y\ge\min\{c_{1},c_{2}\}>0.
\end{multline*}
Since the last relation is contradictory, we conclude that either $\psi^{+}=0$ or $\psi^{-}=0$.  Therefore, the function
$\psi(\x)$ does not change sign, and the following relations hold:
\begin{equation*}
|\psi(\x)|=\mathbf{G}^{\hat N}|\psi|(\x)=\int_{\Omega}g_{\hat N}(\x,\y)|\psi(\y)|\,d\y\ge
\left(\frac{\mu_{-}}{\mu_{+}\|\wt a\|_{L_{1}(\Omega)}}\right)^{\hat N}\gamma
\int_{\Omega}|\psi(\y)|\,d\y=:c_{3}>0.
\end{equation*}
Consequently, the function  $\psi(\x)$ is separated from zero.
\end{proof}

\begin{lemma}\label{lemmaf1.2}
There exists a real function $\psi_{0}\in L_{2}(\Omega)$ that belongs to the kernel of the operator $(\mathbf{G}-I)$
and satisfies the lower bound
\begin{equation}\label{fdop1}
\psi_{0}(\z)\ge\psi_{-}>0,\ \ \z\in\Omega.
\end{equation}
\end{lemma}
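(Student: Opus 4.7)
The plan is to deduce the existence of $\psi_{0}$ from Lemma \ref{lemmaf1.1} combined with the Riesz--Schauder theory for compact operators. First I would establish that $\operatorname{Ker}(\mathbf{G}-I)$ is non-trivial. Since $\mathbf{G}$ is a compact operator in $L_{2}(\Omega)$ (as noted in the preceding subsection) and $\mathbf{G}^{*}\mathbf{1}_{\Omega}=\mathbf{1}_{\Omega}$, the point $1$ belongs to $\sigma(\mathbf{G}^{*})=\sigma(\mathbf{G})$. By the Riesz--Schauder theorem any non-zero point in the spectrum of a compact operator is an eigenvalue of finite algebraic multiplicity; hence there exists $\psi\in L_{2}(\Omega)$, $\psi\not=0$, with $\mathbf{G}\psi=\psi$.

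Second, I would produce a real eigenfunction. The integral kernel $g_{1}(\x,\y)=\widetilde{a}(\y-\x)\mu(\y,\x)/p(\y)$ of $\mathbf{G}$ is real-valued, so $\mathbf{G}$ commutes with complex conjugation. Consequently $\overline{\psi}\in\operatorname{Ker}(\mathbf{G}-I)$, and therefore both $\operatorname{Re}\psi$ and $\operatorname{Im}\psi$ lie in $\operatorname{Ker}(\mathbf{G}-I)$. Since $\psi\not=0$, at least one of these two real functions, call it $\varphi$, is a non-trivial element of $\operatorname{Ker}(\mathbf{G}-I)$.

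Finally, I would invoke Lemma \ref{lemmaf1.1} applied to $\varphi$. That lemma asserts that any non-trivial real function in $\operatorname{Ker}(\mathbf{G}-I)$ does not change sign and is separated from zero; in fact its proof produces an explicit constant $c>0$ (denoted $c_{3}$ there) such that $|\varphi(\z)|\geqslant c$ for all $\z\in\Omega$. Setting $\psi_{0}:=\varphi$ if $\varphi$ is a.e.\ positive and $\psi_{0}:=-\varphi$ otherwise, one obtains a function $\psi_{0}\in\operatorname{Ker}(\mathbf{G}-I)$ satisfying the desired bound $\psi_{0}(\z)\geqslant\psi_{-}:=c>0$.

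There is no real obstacle here; the statement is essentially a repackaging of Lemma \ref{lemmaf1.1}, with the only extra ingredient being the standard Fredholm-theoretic fact that $1$ is an eigenvalue (not merely a spectral point) of the compact operator $\mathbf{G}$, together with the reality of the integral kernel to pass from a complex eigenfunction to a real one.
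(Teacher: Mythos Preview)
Your proposal is correct and follows essentially the same approach as the paper's proof: use the reality of the integral kernel to obtain a real eigenfunction, apply Lemma~\ref{lemmaf1.1}, and flip the sign if necessary. The only difference is that you spell out the Riesz--Schauder argument for why $\operatorname{Ker}(\mathbf{G}-I)\ne\{0\}$, whereas the paper treats this as implicit from the preceding discussion that $1\in\sigma(\mathbf{G})$ with $\mathbf{G}$ compact.
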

\begin{proof}
Since the integral kernel $g_1(\x,\y)$ of the operator $\mathbf{G}$ is real,  then we can assume that a non-trivial eigenfunction $\psi_{0}\in\operatorname{Ker}(\mathbf{G}-I)$ is real.
By Lemma \ref{lemmaf1.1}  the function $\psi_{0}$ does not change sign and is separated from zero.
Then either $\psi_{0}$ or $-\psi_{0}$ satisfies condition \eqref{fdop1}.
\end{proof}


\begin{lemma}\label{lemmaf1.3}
Let $\psi_{0}$ be an element of $\operatorname{Ker}(\mathbf{G}-I)$ that satisfies condition \eqref{fdop1},
and assume that $\psi_{1}$ is a solution of the equation
$$
(\mathbf{G}-I)\psi_{1}=\psi_{0}.
$$
Then the function $\psi_{1}$ does not change sign.
\end{lemma}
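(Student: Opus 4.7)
The plan is to argue by contradiction in the spirit of Lemma \ref{lemmaf1.1}. Suppose $\psi_{1}$ solves $(\mathbf{G}-I)\psi_{1} = \psi_{0}$ and changes sign; then the positive part $\phi := \psi_{1}^{+}\ge 0$ and the negative part $\eta := \psi_{1}^{-}\ge 0$ are both non-trivial and have disjoint supports, with $\psi_{1} = \phi - \eta$. Since $\mathbf{G}\psi_{0} = \psi_{0}$, iterating the equation gives $\mathbf{G}^{N}\psi_{1} = \psi_{1} + N\psi_{0}$ for every $N \in \mathbb{N}$, which rewrites as
$$
\mathbf{G}^{N}\phi(\x) - \phi(\x) = \mathbf{G}^{N}\eta(\x) - \eta(\x) + N\psi_{0}(\x),\qquad \x \in \Omega.
$$

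I would then specialize to $N = \hat N$, where $\hat N$ is the integer produced in the proof of Lemma \ref{lemmaf1.1} for which the kernel $g_{\hat N}(\x,\y)$ of $\mathbf{G}^{\hat N}$ is uniformly bounded below by a positive constant; consequently $\mathbf{G}^{\hat N} f(\x) \ge c_{0}\int_{\Omega} f\,d\y$ for every non-negative $f \in L_{2}(\Omega)$ and every $\x \in \Omega$, with $c_{0} > 0$ depending only on $\hat N$, $\mu_{\pm}$ and $\|\widetilde{a}\|_{L_{1}(\Omega)}$. Combining this with $\psi_{0} \ge \psi_{-} > 0$, I would produce a uniform pointwise lower bound for $\mathbf{G}^{\hat N}\phi - \phi$ by splitting $\Omega$ exactly as in Lemma \ref{lemmaf1.1}: at points where $\phi(\x) > 0$ (so $\eta(\x) = 0$), the identity above yields $\mathbf{G}^{\hat N}\phi(\x) - \phi(\x) = \mathbf{G}^{\hat N}\eta(\x) + \hat N \psi_{0}(\x) \ge c_{0}\int_{\Omega}\eta\,d\y + \hat N \psi_{-}$, while at points where $\phi(\x) = 0$ one has $\mathbf{G}^{\hat N}\phi(\x) - \phi(\x) = \mathbf{G}^{\hat N}\phi(\x) \ge c_{0}\int_{\Omega}\phi\,d\y$. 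Both lower bounds are strictly positive thanks to the assumed non-triviality of $\phi$ and $\eta$, so $\mathbf{G}^{\hat N}\phi - \phi \ge c_{*} > 0$ pointwise a.e.\ on $\Omega$ for some $c_{*}$.

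The contradiction then comes from integration against $\mathbf{1}_{\Omega}$. On the one hand, the pointwise lower bound gives $(\mathbf{G}^{\hat N}\phi - \phi,\mathbf{1}_{\Omega}) \ge c_{*}|\Omega| > 0$, while on the other hand the identity $\mathbf{G}^{*}\mathbf{1}_{\Omega} = \mathbf{1}_{\Omega}$ forces $(\mathbf{G}^{\hat N}\phi - \phi,\mathbf{1}_{\Omega}) = \bigl(\phi,\,((\mathbf{G}^{*})^{\hat N} - I)\mathbf{1}_{\Omega}\bigr) = 0$. This inconsistency rules out the assumption that both $\phi$ and $\eta$ are non-trivial, establishing that $\psi_{1}$ does not change sign. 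The main obstacle is to make both branches of the case split deliver a strictly positive lower bound simultaneously; this is exactly why the iteration must be carried to the power $\hat N$ (so the kernel becomes uniformly positive) rather than stopped at $N = 1$, and it is also the reason the extra term $\hat N \psi_{0}$ — which is absent in the analogous argument for Lemma \ref{lemmaf1.1} — only reinforces, rather than obstructs, the bound on $\{\phi > 0\}$.
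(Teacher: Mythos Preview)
Your proof is correct and follows essentially the same approach as the paper's: iterate the equation to $(\mathbf{G}^{\hat N}-I)\psi_{1}=\hat N\psi_{0}$, split $\psi_{1}$ into its positive and negative parts, use the uniformly positive kernel of $\mathbf{G}^{\hat N}$ together with $\psi_{0}\ge\psi_{-}$ to obtain a strictly positive pointwise lower bound for $\mathbf{G}^{\hat N}\psi_{1}^{+}-\psi_{1}^{+}$ in both cases, and reach a contradiction by pairing with $\mathbf{1}_{\Omega}$ via $\mathbf{G}^{*}\mathbf{1}_{\Omega}=\mathbf{1}_{\Omega}$. The only differences from the paper are cosmetic (your $c_{0}$ in place of the explicit constant $(\mu_{-}/(\mu_{+}\|\wt a\|_{L_{1}}))^{\hat N}\gamma$).
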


\begin{proof}
Since $\mathbf{G}\psi_0=\psi_0$, we obviously have
\begin{equation}\label{f5}
(\mathbf{G}^{N}-I)\psi_{1}=N\psi_{0}\quad\hbox{for any }n\in\mathbb N.
\end{equation}
Let us choose a representative of the function $\psi_1$ for which identity \eqref{f5} holds pointwise in $\Omega$.
Dividing the function $\psi_{1}$ into its positive and negative parts, $\psi_{1}=\psi_{1}^{+}-\psi_{1}^{-}$,
we assume, by contradiction, that both of these parts are  non-zero. For $N=\hat N$ identity  \eqref{f5} takes the form
\begin{equation*}
\mathbf{G}^{\hat N}\psi_{1}^{+}-\psi_{1}^{+}=\mathbf{G}^{\hat N}\psi_{1}^{-}-\psi_{1}^{-}+\hat N\psi_{0}.
\end{equation*}
If $\x\in\{\y\in\Omega\,:\,\psi_1^+(\y)>0\}$, then $\psi_{1}^{-}(\x)=0$, and
\begin{multline}\label{f6}
\mathbf{G}^{\hat N}\psi_{1}^{+}(\x)-\psi_{1}^{+}(\x)=\mathbf{G}^{\hat N}\psi_{1}^{-}(\x)+\hat N\psi_{0}(\x)= \int_{\Omega}g_{\hat N}(\x,\y)\psi_{1}^{-}(\y)\,d\y+\hat N\psi_{0}(\x)\ge\\\ge
\left(\frac{\mu_{-}}{\mu_{+}\|\wt a\|_{L_{1}(\Omega)}}\right)^{\hat N}\gamma
\int_{\Omega}\psi_{1}^{-}(\y)\,d\y+\hat N\psi_{-}=:s_{1}>0.
\end{multline}
For $\x\in\Omega$ from the complement set we have $\psi_{1}^{+}(\x)=0$ and
\begin{multline}\label{f7}
\mathbf{G}^{\hat N}\psi_{1}^{+}(\x)-\psi_{1}^{+}(\x)=\mathbf{G}^{\hat N}\psi_{1}^{+}(\x)= \int_{\Omega}g_{\hat N}(\x,\y)\psi_{1}^{+}(\y)\,d\y\geqslant \\
\geqslant
\left(\frac{\mu_{-}}{\mu_{+}\|\wt a\|_{L_{1}(\Omega)}}\right)^{\hat N}\gamma
\int_{\Omega}\psi_{1}^{+}(\y)\,d\y=:s_{2}>0.
\end{multline}
Combining  \eqref{f6} and \eqref{f7} yields
\begin{multline*}
0=\left(\psi_{1}^{+},\left(\mathbf{G}^{*}\right)^{\hat N}\mathbf{1}_{\Omega}-\mathbf{1}_{\Omega}\right)= \left(\mathbf{G}^{\hat N}\psi_{1}^{+}-\psi_{1}^{+},\mathbf{1}_{\Omega}\right)= \\
=\int_{\Omega}(\mathbf{G}^{\hat N}\psi_{1}^{+}(\y)-\psi_{1}^{+}(\y))\,d\y\ge\min\{s_{1},s_{2}\}>0.
\end{multline*}
Since the last inequality is contradictory, then either $\psi_{1}^{+}$ or $\psi_{1}^{-}$ is equal to zero. Therefore,
the function $\psi_{1}(\x)$  does not change sign.
\end{proof}

\begin{lemma}\label{lemmaf1.4}
Let a real finction $\psi$ be an element of $\operatorname{Ker}(\mathbf{G}-I)$. Then $\psi=c\psi_0$ with $c\in\mathbb R$.
\end{lemma}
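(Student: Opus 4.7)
The plan is to exploit Lemma \ref{lemmaf1.1} in its sharpest form, namely that every nontrivial real element of $\operatorname{Ker}(\mathbf{G}-I)$ is separated from zero and does not change sign. This forces any such element to have a nonzero integral over $\Omega$, and a one-line linear-algebraic manipulation then yields the desired one-dimensionality.

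Concretely, given a real $\psi\in\operatorname{Ker}(\mathbf{G}-I)$, I will set
\[
 c:=\frac{\int_{\Omega}\psi(\z)\,d\z}{\int_{\Omega}\psi_{0}(\z)\,d\z}\in\mathbb{R}.
\]
The denominator is strictly positive by Lemma \ref{lemmaf1.2} (since $\psi_{0}(\z)\ge\psi_{-}>0$), so $c$ is well-defined. I then introduce $\phi:=\psi-c\psi_{0}$. By linearity $\phi$ is a real element of $\operatorname{Ker}(\mathbf{G}-I)$, and by the choice of $c$ one has $\int_{\Omega}\phi(\z)\,d\z=0$ directly from the construction.

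The next step is to apply Lemma \ref{lemmaf1.1} to $\phi$ and derive a contradiction from nontriviality. If $\phi$ were not identically zero, Lemma \ref{lemmaf1.1} would imply that $\phi$ does not change sign on $\Omega$ and satisfies $|\phi(\z)|\ge c_{3}>0$ for every $\z\in\Omega$. But then $\left|\int_{\Omega}\phi\,d\z\right|\ge c_{3}\,\operatorname{mes}\Omega>0$, which contradicts $\int_{\Omega}\phi\,d\z=0$. Therefore $\phi\equiv 0$, and consequently $\psi=c\psi_{0}$, which is exactly the claim of Lemma \ref{lemmaf1.4}.

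I do not foresee any substantive obstacle here: the heavy lifting has already been done in Lemma \ref{lemmaf1.1}, whose lower-bound/sign-stability conclusion is precisely the tool that rules out nontrivial mean-zero elements of $\operatorname{Ker}(\mathbf{G}-I)$. In particular, this route avoids the more delicate Krein--Rutman-style supremum argument, does not require an $L_{\infty}$ bound on $\psi_{0}$ from above, and makes no use of Lemma \ref{lemmaf1.3} (which is instead needed to pass from one-dimensionality of the kernel to rank one of the Riesz projector $\mathcal{P}$, i.e.\ to the remaining part of Theorem \ref{theorem1.5}).
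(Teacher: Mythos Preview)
Your proof is correct and follows essentially the same approach as the paper's: subtract a suitable multiple of $\psi_{0}$ from $\psi$, observe that the difference lies in $\operatorname{Ker}(\mathbf{G}-I)$, and invoke Lemma~\ref{lemmaf1.1} to rule out a nontrivial remainder. The only cosmetic difference is the choice of normalizing functional: the paper sets $u=\psi-\frac{(\psi,\psi_{0})}{(\psi_{0},\psi_{0})}\psi_{0}$ (orthogonal to $\psi_{0}$) and uses $\psi_{0}\ge\psi_{-}>0$ to show $(u,\psi_{0})\ne 0$ for sign-definite $u$, whereas you set $\phi=\psi-c\psi_{0}$ with $\int_{\Omega}\phi=0$ and use the lower bound $|\phi|\ge c_{3}$ from Lemma~\ref{lemmaf1.1} directly; both routes are equally short and rely on the same key lemma.
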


\begin{proof}
Consider the function
$$
u=\psi-\frac{(\psi,\psi_{0})}{(\psi_{0},\psi_{0})}\psi_{0}.
$$
By construction, this function is real and orthogonal to $\psi_{0}$ in $L_2(\Omega)$; moreover,
$u\in\operatorname{Ker}(\mathbf{G}-I)$.
If the function $u$ is not equal to zero, then, by Lemma  \ref{lemmaf1.1}, it does not change sign and thus
$$
|(u,\psi_{0})|=\int_{\Omega}|u(\y)|\psi_{0}(\y)\,d\y\ge\psi_{-}\int_{\Omega}|u(\y)|\,d\y>0.
$$
This contradicts the orthogonality of $u$ and $\psi_0$. Therefore, $u=0$ and
$\psi=\frac{(\psi,\psi_{0})}{(\psi_{0},\psi_{0})}\psi_{0}$.
\end{proof}

It remains to show that the algebraic multiplicity of the eigenvalue $\lambda=1$ of operator $\mathbf{G}$
is equal to one.
\begin{lemma}\label{lemmaf1.5}
The following relation holds:
\begin{equation*}
\operatorname{Ker}(\mathbf{G}-I)^{2}=\operatorname{Ker}(\mathbf{G}-I)=\mathcal{L}\{\psi_{0}\}.
\end{equation*}
\end{lemma}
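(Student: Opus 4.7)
The second equality $\operatorname{Ker}(\mathbf{G}-I)=\mathcal{L}\{\psi_{0}\}$ is essentially Lemma \ref{lemmaf1.4}, which was stated for real elements. To upgrade it to complex $\psi\in\operatorname{Ker}(\mathbf{G}-I)$, I would use that the integral kernel $g_{1}(\x,\y)=\widetilde a(\y-\x)\mu(\y,\x)/p(\y)$ of $\mathbf{G}$ is real-valued, so both $\operatorname{Re}\psi$ and $\operatorname{Im}\psi$ separately satisfy $(\mathbf{G}-I)v=0$; Lemma \ref{lemmaf1.4} then represents each as a real multiple of $\psi_{0}$, and $\psi$ is a complex multiple of $\psi_{0}$.

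For the first equality, the strategy is a one-line Fredholm-type argument using the positivity of $\psi_{0}$. Take an arbitrary $\psi\in\operatorname{Ker}(\mathbf{G}-I)^{2}$. Then $(\mathbf{G}-I)\psi\in\operatorname{Ker}(\mathbf{G}-I)$, so by the already established second equality
$$
(\mathbf{G}-I)\psi=c\,\psi_{0}
$$
for some scalar $c\in\mathbb{C}$. Using the identity $\mathbf{G}^{*}\mathbf{1}_{\Omega}=\mathbf{1}_{\Omega}$ recorded at the beginning of \S\ref{Sec5}, I would pair both sides with $\mathbf{1}_{\Omega}$ in $L_{2}(\Omega)$:
$$
c\,(\psi_{0},\mathbf{1}_{\Omega})=\bigl((\mathbf{G}-I)\psi,\mathbf{1}_{\Omega}\bigr)=\bigl(\psi,(\mathbf{G}^{*}-I)\mathbf{1}_{\Omega}\bigr)=0.
$$
Since the lower bound \eqref{fdop1} of Lemma \ref{lemmaf1.2} gives $\psi_{0}(\x)\ge\psi_{-}>0$, the factor $(\psi_{0},\mathbf{1}_{\Omega})=\int_{\Omega}\psi_{0}(\x)\,d\x$ is strictly positive. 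Hence $c=0$, which means $(\mathbf{G}-I)\psi=0$, i.e.\ $\psi\in\operatorname{Ker}(\mathbf{G}-I)$. The reverse inclusion $\operatorname{Ker}(\mathbf{G}-I)\subset\operatorname{Ker}(\mathbf{G}-I)^{2}$ is trivial, so the two kernels coincide.

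\textbf{Main obstacle.} There is no serious obstacle left at this stage: all the heavy lifting (sign-preservation, positivity and normalization of $\psi_{0}$, one-dimensionality of the real part of the kernel) has been done in Lemmas \ref{lemmaf1.1}--\ref{lemmaf1.4}. The only subtle point is making sure one does not need the auxiliary Lemma \ref{lemmaf1.3} here: the orthogonality relation $(\psi_{0},\mathbf{1}_{\Omega})>0$ combined with $\mathbf{G}^{*}\mathbf{1}_{\Omega}=\mathbf{1}_{\Omega}$ is what rules out a nontrivial Jordan block at the eigenvalue $\lambda=1$, and Lemma \ref{lemmaf1.3} is not invoked. Once this is observed, the proof reduces to the three lines above, and Theorem \ref{theorem1.5} follows because the rank of $\mathcal{P}$ equals the dimension of the root subspace $\bigcup_{k\ge 1}\operatorname{Ker}(\mathbf{G}-I)^{k}$, which by the lemma is one; the same argument applied to $\mathbf{G}^{*}$ (or by duality, using $\operatorname{rank}\mathcal{P}=\operatorname{rank}\mathcal{P}^{*}$) gives $\operatorname{rank}\mathcal{P}^{*}=1$ as well.
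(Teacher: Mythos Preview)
Your proof is correct and is in fact more economical than the paper's. The first equality $\operatorname{Ker}(\mathbf{G}-I)=\mathcal{L}\{\psi_0\}$ is handled identically in both: split into real and imaginary parts using the reality of the kernel $g_1$, then apply Lemma~\ref{lemmaf1.4}. For the second equality, however, the paper argues by contradiction via Lemma~\ref{lemmaf1.3}: assuming $(\mathbf{G}-I)\psi_1=\psi_0$ is solvable, it subtracts the $\psi_0$-component to obtain a real $u$ with $(\mathbf{G}-I)u=\psi_0$ and $(u,\psi_0)=0$; Lemma~\ref{lemmaf1.3} then forces $u$ to have constant sign, which together with $\psi_0\ge\psi_->0$ contradicts $(u,\psi_0)=0$.

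Your route bypasses Lemma~\ref{lemmaf1.3} entirely. You observe that $\mathbf{1}_\Omega\in\operatorname{Ker}(\mathbf{G}^*-I)$ forces $\operatorname{Ran}(\mathbf{G}-I)\perp\mathbf{1}_\Omega$, while $(\psi_0,\mathbf{1}_\Omega)=\int_\Omega\psi_0>0$ by \eqref{fdop1}; hence $\psi_0\notin\operatorname{Ran}(\mathbf{G}-I)$ and no Jordan block can occur. This is the standard Fredholm obstruction, and it makes Lemma~\ref{lemmaf1.3} unnecessary for the paper as a whole, since that lemma is invoked nowhere else. The paper's approach, on the other hand, stays within the positivity framework set up in Lemmas~\ref{lemmaf1.1}--\ref{lemmaf1.3} and does not appeal to the adjoint relation, at the cost of one extra lemma.
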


\begin{proof}
Since $\mathbf{G}$ is an integral operator with a real integral kernel, for any function  $\psi\in\operatorname{Ker}(\mathbf{G}-I)$ one has $\operatorname{Re}\psi\in\operatorname{Ker}(\mathbf{G}-I)$, $\operatorname{Im}\psi\in\operatorname{Ker}(\mathbf{G}-I)$.
By Lemma \ref{lemmaf1.4}, $\operatorname{Re}\psi=\lambda\psi_{0}$, $\operatorname{Im}\psi=\nu\psi_{0}$,
and thus, $\psi=(\lambda+i\nu)\psi_{0}$, i.e. $\operatorname{Ker}(\mathbf{G}-I)=\mathcal{L}\{\psi_{0}\}$.

If we assume that  $\operatorname{Ker}(\mathbf{G}-I)^{2}\setminus\operatorname{Ker}(\mathbf{G}-I)$ is not empty,
then there exists $\psi\in L_2(\Omega)$
 that belongs to
$\operatorname{Ker}(\mathbf{G}-I)^{2}\setminus\operatorname{Ker}(\mathbf{G}-I)$.  In this case
 $(\mathbf{G}-I)\psi=\alpha\psi_{0}$, $\alpha\not=0$ and, therefore, $(\mathbf{G}-I)(\alpha^{-1}\psi)=\psi_{0}$.
Since the integral kernel of $\mathbf{G}$ is real, we obtain
$$
(\mathbf{G}-I)\operatorname{Re}(\alpha^{-1}\psi)=\psi_{0},\ \
(\mathbf{G}-I) \operatorname{Im}(\alpha^{-1}\psi)=0.
$$
Then the function $\psi_1:=\operatorname{Re}(\alpha^{-1}\psi)$ satisfies the relation
\begin{equation*}
(\mathbf{G}-I)\psi_{1}=\psi_{0}.
\end{equation*}
Letting $u=\psi_{1}-\frac{(\psi_{1},\psi_{0})}{(\psi_{0},\psi_{0})}\psi_{0}$, we observe that the function $u$ is real,
orthogonal to the function $\psi_{0}$, and that $(\mathbf{G}-I)u=\psi_{0}$.
By Lemma  \ref{lemmaf1.3} the function $u$ does not change sign. Consequently,
\begin{equation*}
0=|(u,\psi_{0})|=\int_{\Omega}|u(\y)|\psi_{0}(\y)\,d\y\ge\psi_{-}\int_{\Omega}|u(\y)|\,d\y.
\end{equation*}
It follows from the last relation that $u=0$ which contradicts to the identity  $(\mathbf{G}-I) u = \psi_0$. Therefore,
$$
\operatorname{Ker}(\mathbf{G}-I)^{2}=\operatorname{Ker}(\mathbf{G}-I).
$$
\end{proof}

We fix the choice of the function $\psi_{0}\in\operatorname{Ker}(\mathbf{G}-I)$ by imposing the conditions
$$
\psi_{0}(\z)\ge\psi_{-}>0,\ \ \z\in\Omega;\ \ \int_{\Omega}\psi_{0}(\y)\,d\y=1.
$$
The  next statement is a direct consequence of Lemma  \ref{lemmaf1.5}.

\begin{proposition}\label{proposition2}
The Riesz projectors $\mathcal{P}$ and $\mathcal{P}^{*}$ have rank $1$;
the following identities are valid:
$$
\operatorname{Ker}(\mathbf{G}-I)=\mathcal{L}\{\psi_{0}\},\ \ \operatorname{Ker}(\mathbf{G}^{*}-I)=\mathcal{L}\{\mathbf{1}_{\Omega}\};
$$
$$
\mathcal{P}=(\cdot,\mathbf{1}_{\Omega})\psi_{0},\ \ \mathcal{P}^{*}=(\cdot,\psi_{0})\mathbf{1}_{\Omega}.
$$
\end{proposition}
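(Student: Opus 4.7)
The plan is to derive the proposition as a short sequence of standard Riesz-projector manipulations, since Lemma \ref{lemmaf1.5} has already handled the non-trivial spectral content and excluded Jordan blocks at $\lambda=1$.

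First, I would establish $\operatorname{rank}\mathcal{P}=1$. By Lemma \ref{lemmaf1.5} the chain of kernels $\operatorname{Ker}(\mathbf{G}-I)^{k}$ stabilizes at $k=1$, with $\operatorname{Ker}(\mathbf{G}-I)=\mathcal{L}\{\psi_{0}\}$. Since $\mathbf{G}$ is compact and $1$ is isolated in $\sigma(\mathbf{G})$, standard Riesz theory identifies $\operatorname{Ran}\mathcal{P}$ with the stabilized generalized eigenspace $\bigcup_{k\ge 1}\operatorname{Ker}(\mathbf{G}-I)^{k}=\mathcal{L}\{\psi_{0}\}$; hence $\operatorname{rank}\mathcal{P}=1$. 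The analogous statement $\operatorname{rank}\mathcal{P}^{*}=1$ follows from the general fact that the Hilbert adjoint of the Riesz projector of $\mathbf{G}$ at $1$ coincides with the Riesz projector $\mathcal{P}^{*}$ of $\mathbf{G}^{*}$ at $1$ (via the substitution $\zeta\mapsto\bar{\zeta}$ in the defining contour integral, together with $((\mathbf{G}-\zeta I)^{-1})^{*}=(\mathbf{G}^{*}-\bar{\zeta}I)^{-1}$).

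Second, I would identify $\operatorname{Ker}(\mathbf{G}^{*}-I)$. We already know $\mathbf{G}^{*}\mathbf{1}_{\Omega}=\mathbf{1}_{\Omega}$, so $\mathbf{1}_{\Omega}\in\operatorname{Ker}(\mathbf{G}^{*}-I)\subset\operatorname{Ran}\mathcal{P}^{*}$. Since the latter subspace is one-dimensional by the first step, this inclusion is an equality: $\operatorname{Ker}(\mathbf{G}^{*}-I)=\mathcal{L}\{\mathbf{1}_{\Omega}\}$.

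Third, for the explicit rank-one formulas, any bounded rank-one projector onto $\mathcal{L}\{\psi_{0}\}$ must have the form $\mathcal{P}u=(u,h)\psi_{0}$ with some $h\in L_{2}(\Omega)$ obtained by Riesz representation of the coefficient functional. For any bounded projector one has $\operatorname{Ker}\mathcal{P}=(\operatorname{Ran}\mathcal{P}^{*})^{\perp}=\mathcal{L}\{\mathbf{1}_{\Omega}\}^{\perp}$, which forces $h$ to be a scalar multiple of $\mathbf{1}_{\Omega}$; the idempotency condition $\mathcal{P}\psi_{0}=\psi_{0}$ combined with the normalization $\int_{\Omega}\psi_{0}\,d\z=1$ pins that scalar to $1$. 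Hence $\mathcal{P}u=(u,\mathbf{1}_{\Omega})\psi_{0}$, and taking the Hilbert adjoint yields $\mathcal{P}^{*}u=(u,\psi_{0})\mathbf{1}_{\Omega}$.

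There is no substantial obstacle: the spectral content sits entirely in Lemmas \ref{lemmaf1.1}--\ref{lemmaf1.5}, and the remainder is routine Hilbert-space bookkeeping. The only small subtlety worth flagging is the identification of the paper's $\mathcal{P}^{*}$ (defined via a contour integral of $(\mathbf{G}^{*}-\zeta I)^{-1}$) with the Hilbert-space adjoint of $\mathcal{P}$, which underpins the orthogonality step $\operatorname{Ker}\mathcal{P}=(\operatorname{Ran}\mathcal{P}^{*})^{\perp}$ used above.
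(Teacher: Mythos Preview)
Your proposal is correct and matches the paper's approach: the paper simply states that the proposition is a direct consequence of Lemma~\ref{lemmaf1.5}, and your write-up is exactly the routine Riesz-projector bookkeeping that unpacks that one-line claim. The small subtlety you flag about identifying the contour-integral definition of $\mathcal{P}^{*}$ with the Hilbert adjoint of $\mathcal{P}$ is a fair point to make explicit, though the paper leaves it implicit.
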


In order to complete the proof of Theorem  \ref{theorem1.5} it remains to show that the function $\psi_{0}$ is bounded.
This is the subject of the following four subsections.
\subsection{Approximation of the kernel $\wt a$ by bounded functions}

For each $N \in \N$ define the function
\begin{equation}
\label{5.3}
  \wt{a}_N(\z) := \begin{cases}\wt{a}(\z), & \text{if}\ \wt{a}(\z) \le N, \\ N, & \text{if}\ \wt{a}(\z) > N.\end{cases}
  \end{equation}
  By construction,  $\wt{a}_N(\z)$ is a $\Z^d$-periodic function such that
 $$
 0 \le \wt{a}_N(\z) \le \wt{a}(\z),\quad \z \in \R^d,\ \ N \in \N,
 $$
 and
 $$
 \lim_{N \to \infty} \wt{a}_N(\z) = \wt{a}(\z),\quad \z \in \R^d.
 $$
By the Lebesgue theorem,
\begin{equation}
\label{5.4}
  \| \wt{a}_N - \wt{a} \|_{L_1(\Omega)} \to 0 \quad \text{as} \ N \to \infty.
  \end{equation}

For each $N\in \N$ we denote by $a_{N}(\z)$ the function $a_{N}(\z)=\wt a_{N}(\z)\cdot\mathbf{1}_{\Omega}(\z)$, $\z\in\mathbb{R}^{d}$.
Obviously, the function  $a_{N}$  possesses the following properties:
$$
a_{N}\in L_{1}(\R^{d}),\ \ a_{N}(\z)\ge 0,\ \ \z\in\R^{d},\ \ \|a_{N}\|_{L_{1}(\mathbb{R}^{d})}=\|\wt a_{N}\|_{L_{1}(\Omega)},
$$
$$
\wt a_{N}(\z)=\sum_{\n\in\mathbb{Z}^{d}}a_{N}(\z+\n).
$$
According to  the definition of $\wt{a}_N(\z)$ in \eqref{5.3}, the set of $\z\in\Omega$ for which  $\wt{a}_N(\z) >0$
coincides, for all $N\in\N$, with the set $\{\z\in\Omega\,:\, \wt{a}(\z) >0\}$; due to \eqref{5.2} the measure of this set is positive. Furthermore, by construction,
\begin{equation*}
\label{5.4a}
\wt{a}_N(\z) \le \wt{a}_{N+1}(\z), \quad \z \in \R^d, \quad N \in \N,
  \end{equation*}
and thus
\begin{equation*}
\label{5.5}
 0< \| \wt{a}_1 \|_{L_1(\Omega)} \le \| \wt{a}_N \|_{L_1(\Omega)} \le \| \wt{a} \|_{L_1(\Omega)}, \quad N \in \N.
  \end{equation*}
 We conclude that  $a_{N}$ satisfies all the conditions in  \eqref{h1.1}.

For each  $N \in \N$, we denote by  $\mathbf{A}_N$ a bounded linear operator in $L_2(\Omega)$
which is defined by the relations
\begin{equation*}
\begin{aligned}
\label{AN}
\mathbf{A}_N u (\x) &:= p_N(\x) u(\x) - \mathbf{B}_N u(\x),
\\
\mathbf{B}_Nu(\x)  &:= \intop_{\Omega}\widetilde a_N(\x-\y)\mu(\x,\y)u(\y)\,d\y,\ \ u\in L_{2}(\Omega),
\end{aligned}
\end{equation*}
with
\begin{equation*}
\label{5.6}
p_N(\x) = \intop_\Omega \widetilde a_N(\x-\y)\mu(\x,\y)\,d\y.
\end{equation*}
Due to \eqref{h1.2} and the definition of $p_N$, the following inequalities are valid:
\begin{equation*}
\label{5.6a}
0 <\mu_- \| \wt{a}_1\|_{L_1(\Omega)}\le \mu_- \| \wt{a}_N\|_{L_1(\Omega)} \le p_N(\x) \le \mu_+ \| \wt{a}_N\|_{L_1(\Omega)},
\end{equation*}
\begin{equation}\label{appr_p_N}
\| p_N - p\|_{L_\infty} \le \mu_+ \| \wt{a}_N - \wt{a}\|_{L_1(\Omega)}.
\end{equation}
By the Schur lemma,
\begin{equation}\label{f9}
\| \mathbf{B}_N - \mathbf{B}\|_{L_2(\Omega) \to L_2(\Omega)} \le \mu_+ \| \wt{a}_N - \wt{a}\|_{L_1(\Omega)}.
\end{equation}
From \eqref{appr_p_N} and \eqref{f9} it follows that
\begin{equation*}
\label{5.10}
\| \mathbf{A}_N - \mathbf{A}\|_{L_2(\Omega) \to L_2(\Omega)} \le 2 \mu_+ \| \wt{a}_N - \wt{a}\|_{L_1(\Omega)}.
\end{equation*}
Combining the last estimate with  \eqref{5.4} yields
$\| \mathbf{A}_N - \mathbf{A}\|_{L_2(\Omega) \to L_2(\Omega)} \to 0$, as $N \to \infty$.

Next, denoting by $\mathbf{G}_{N}$ the operator  $\mathbf{B}_{N}^{*}[p_{N}^{-1}]$ and taking into account \eqref{5.4}, \eqref{f9} and the inequality
$$
\|p_{N}^{-1}-p^{-1}\|_{L_{\infty}}\le(\mu_{-}\|\wt a_{1}\|_{L_{1}(\Omega)})^{-2}\mu_{+}\|\wt a_{N}-\wt a \|_{L_{1}(\Omega)},
$$
we obtain the relation
\begin{equation}
\label{5.10a}
\|\mathbf{G}_{N}-\mathbf{G}\|\to0,\quad  N\to+\infty.
\end{equation}

\subsection{Approximation for the Riesz projector  $\mathcal{P}$ and for the function $\psi_{0}$}
Since $\mathbf{G}_{N}^{*}\mathbf{1}_{\Omega}=\mathbf{1}_{\Omega}$, then $1\in\sigma(\mathbf{G}_{N})\cap\sigma(\mathbf{G}_{N}^{*})$.  Due to the compactness of the operators
$\mathbf{G}_{N}$ and $\mathbf{G}^{*}_{N}$,  there exists $r_{N}>0$ such that
$$
\sigma(\mathbf{G}_{N})\cap B_{2r_{N}}(1)=\{1\},\ \ \sigma(\mathbf{G}_{N}^{*})\cap B_{2r_{N}}(1)=\{1\}.
$$
Let us introduce the Riesz projectors of the operators $\mathbf{G}_{N}$ and $\mathbf{G}_{N}^{*}$ that correspond
to the point $\lambda=1$:
\begin{equation*}
\mathcal{P}_{N}=\frac{-1}{2\pi i}\oint_{|\zeta-1|=r_{N}}(\mathbf{G}_{N}-\zeta I)^{-1}\,d\zeta,\ \
\mathcal{P}_{N}^{*}=\frac{-1}{2\pi i}\oint_{|\zeta-1|=r_{N}}(\mathbf{G}_{N}^{*}-\zeta I)^{-1}\,d\zeta.
\end{equation*}
According to Proposition \ref{proposition2}, for each $N\in\N$, there exists a function   $\psi_{N}\in\operatorname{Ker}(\mathbf{G}_{N}-I)$  such that
\begin{equation*}\label{f10}
\psi_{N}(\z)\ge\gamma_{N}>0,\ \ \z\in\Omega,\ \ (\psi_{N},\mathbf{1}_{\Omega})=1,\ \
\operatorname{Ker}(\mathbf{G}_{N}-I)=\mathcal{L}\{\psi_{N}\},\ \ \mathcal{P}_{N}=(\cdot,\mathbf{1}_{\Omega})\psi_{N}.
\end{equation*}

\begin{lemma}\label{lemma6.4.1}
The following limit relations hold:
\begin{equation*}\label{fdop2}
\|\mathcal{P}_{N}-\mathcal{P}\|\to 0,\ \ \|\psi_{N}-\psi_{0}\|_{L_{2}(\Omega)}\to 0,\ \ \hbox{as }N\to+\infty.
\end{equation*}
\end{lemma}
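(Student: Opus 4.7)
The plan is to exploit the norm convergence $\mathbf{G}_N\to\mathbf{G}$ from \eqref{5.10a} and use the fact, established in Proposition \ref{rizprp3}/Remark \ref{rizremark}, that isolated eigenvalues of bounded operators are stable under small-norm perturbations. The key point is to replace the $N$-dependent radii $r_N$ in the definition of $\mathcal{P}_N$ by a single contour, independent of $N$.

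\textbf{Step 1: Fixing a common contour.} Since $\lambda_0=1$ is an isolated point of $\sigma(\mathbf{G})$, let $d_0>0$ denote the distance from $1$ to $\sigma(\mathbf{G})\setminus\{1\}$. Then $(\mathbf{G}-\zeta I)^{-1}$ is holomorphic in a punctured neighbourhood of $1$, and for the circle $\Gamma_0:=\{\zeta\in\mathbb{C}:|\zeta-1|=d_0/2\}$ the quantity $\widetilde K:=\max_{\zeta\in\Gamma_0}\|(\mathbf{G}-\zeta I)^{-1}\|$ is finite. In view of \eqref{5.10a}, we may choose $N_0$ so large that $\|\mathbf{G}_N-\mathbf{G}\|$ satisfies the smallness hypothesis of Proposition~\ref{rizprp3} (applied with $A=\mathbf{G}$, $B=\mathbf{G}_N$, $\lambda_0=1$) for every $N\ge N_0$. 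Then, for $N\ge N_0$, the annulus $\{d_0/3\le|\zeta-1|\le 2d_0/3\}$ lies in the resolvent set of $\mathbf{G}_N$, and the part of $\sigma(\mathbf{G}_N)$ inside $B_{d_0/3}(1)$ consists of a single simple eigenvalue, which is necessarily $\lambda=1$ (since $\mathbf{G}_N^*\mathbf{1}_\Omega=\mathbf{1}_\Omega$).

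\textbf{Step 2: Identifying $\mathcal{P}_N$ with a contour integral over $\Gamma_0$.} Because $1$ is the only point of $\sigma(\mathbf{G}_N)$ inside $\Gamma_0$ for $N\ge N_0$, Cauchy's theorem gives
\begin{equation*}
\mathcal{P}_N=-\frac{1}{2\pi i}\oint_{\Gamma_0}(\mathbf{G}_N-\zeta I)^{-1}\,d\zeta,\qquad
\mathcal{P}=-\frac{1}{2\pi i}\oint_{\Gamma_0}(\mathbf{G}-\zeta I)^{-1}\,d\zeta.
\end{equation*}
Using the resolvent identity,
\begin{equation*}
(\mathbf{G}_N-\zeta I)^{-1}-(\mathbf{G}-\zeta I)^{-1}=-(\mathbf{G}_N-\zeta I)^{-1}(\mathbf{G}_N-\mathbf{G})(\mathbf{G}-\zeta I)^{-1},\qquad \zeta\in\Gamma_0,
\end{equation*}
together with the uniform bounds $\|(\mathbf{G}-\zeta I)^{-1}\|\le \widetilde K$ and $\|(\mathbf{G}_N-\zeta I)^{-1}\|\le 3\widetilde K/2$ (the latter obtained from the Neumann series, as in \eqref{e2.32}), yields
\begin{equation*}
\|\mathcal{P}_N-\mathcal{P}\|\le \tfrac{1}{2}d_0\cdot\tfrac{3}{2}\widetilde K^2\,\|\mathbf{G}_N-\mathbf{G}\|\xrightarrow[N\to\infty]{}0,
\end{equation*}
which proves the first limit relation.

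\textbf{Step 3: From projectors to eigenfunctions.} By Proposition \ref{proposition2} we have $\mathcal{P}=(\cdot,\mathbf{1}_\Omega)\psi_0$ and $\mathcal{P}_N=(\cdot,\mathbf{1}_\Omega)\psi_N$. Since $\|\mathbf{1}_\Omega\|_{L_2(\Omega)}^2=1$, applying both projectors to $\mathbf{1}_\Omega$ gives $\mathcal{P}\mathbf{1}_\Omega=\psi_0$ and $\mathcal{P}_N\mathbf{1}_\Omega=\psi_N$, whence
\begin{equation*}
\|\psi_N-\psi_0\|_{L_2(\Omega)}=\|(\mathcal{P}_N-\mathcal{P})\mathbf{1}_\Omega\|_{L_2(\Omega)}\le\|\mathcal{P}_N-\mathcal{P}\|\xrightarrow[N\to\infty]{}0,
\end{equation*}
which is the second claim. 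The main subtlety is Step 1: one must verify that the eigenvalue $\lambda=1$ of $\mathbf{G}_N$ does not ``split'' away from $1$ inside $B_{d_0/3}(1)$, i.e.\ that the a priori $N$-dependent radii $r_N$ can be replaced by the fixed radius $d_0/3$; this is exactly what Proposition \ref{rizprp3} (combined with Proposition \ref{rizprp25} to control ranks) provides.
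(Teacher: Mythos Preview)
Your proof is correct and follows essentially the same approach as the paper: fix a common contour using the stability result of Proposition~\ref{rizprp3}, express $\mathcal{P}_N-\mathcal{P}$ as a contour integral over $|\zeta-1|=d_0/2$, and use the resolvent difference estimate together with \eqref{5.10a} to conclude. The only cosmetic difference is that in Step~3 the paper applies the projectors to $\psi_0$ (using $(\psi_0,\mathbf{1}_\Omega)=1$) rather than to $\mathbf{1}_\Omega$, but both choices give $\mathcal{P}_N\,\cdot=\psi_N$ and $\mathcal{P}\,\cdot=\psi_0$ thanks to the normalizations.
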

\begin{proof}
Recalling that the point $\lambda=1$ is a simple isolated eigenvalue of the operator   $\mathbf{G}$,
we denote by $d_{0}$ the distance from the point  $1$ to the remaining part of the spectrum of operator $\mathbf{G}$.
According to Proposition \ref{rizprp3.2}, the annulus $\{\zeta\in\mathbb{C}:d_{0}/3\le |\zeta-1|\le 2d_{0}/3\}$
does not intersect the spectrum of the operator $\mathbf{G}_{N}$,  $N\ge N_{0}$, if $N_0$ is sufficiently large.
Also, for  $N>N_0$, the spectrum of $\mathbf{G}_{N}$ in the disc $\{\zeta\in\mathbb{C}:|\zeta-1|<d_{0}/3\}$
consists of one simple eigenvalue which is equal to $1$. Therefore,
\begin{equation}\label{f11}
\mathcal{P}_{N}-\mathcal{P}=\frac{-1}{2\pi i}\oint_{|\zeta-1|=d_{0}/2}\left((\mathbf{G}_{N}-\zeta I)^{-1}-(\mathbf{G}-\zeta I)^{-1}\right)d\zeta,\ \ N\ge N_{0}.
\end{equation}
Finally, from \eqref{rizprp3.2}, \eqref{5.10a} and \eqref{f11} we deduce that
\begin{equation*}\label{f12}
\|\mathcal{P}_{N}-\mathcal{P}\|\to 0\ \ \hbox{and }\ \psi_{N}=\mathcal{P}_{N}\psi_{0}\to\mathcal{P}\psi_{0}=\psi_{0}\ \ \text{in}\ \ L_{2}(\Omega),\ \ \hbox{as }N\to+\infty.
\end{equation*}
\end{proof}

\subsection{Uniform boundedness of the functions  $\psi_N $ in  $L_\infty$ norm.}
\label{Sec5.3}
Combining the identity
\begin{equation}\label{f13}
\psi_{N}(\x)=\mathbf{G}_{N}\psi_{N}(\x)=\int_{\Omega}\wt a_{N}(\y-\x)\mu(\y,\x)p_{N}^{-1}(\y) \psi_{N}(\y)\,d\y,
\end{equation}
with the relations $\wt a_{N}\le N$, $\mu\le\mu_{+}$, $p_{N}^{-1}\le(\mu_{-}\|\wt a_{1}\|_{L_{1}(\Omega)})^{-1}$, $\psi_{N}\ge 0$ and $(\psi_{N},\mathbf{1}_{\Omega})=1$,
we conclude that the function $\psi_{N}$ is bounded and, moreover,
$$
\|\psi_{N}\|_{L_{\infty}}\le N\mu_{+}(\mu_{-}\|\wt a_{1}\|_{L_{1}(\Omega)})^{-1}.
$$
Our goal is to obtain a uniform in $N$ estimate for the $L_\infty$ norm of $\psi_{N}$.

For the sake of brevity we denote
$$C_{0}:=\mu_{+}(\mu_{-}\|\wt a_{1}\|_{L_{1}(\Omega)})^{-1},\ \ \Omega_{N,1}:=\{\x\in\Omega:\psi_{N}(\x)>\|\psi_{N}\|_{L_{\infty}}^{1/2}\},\ \ \Omega_{N,2}:=\Omega\setminus\Omega_{N,1}.
$$
\begin{lemma}\label{lemma6.5.1}
The following estimate holds:
\begin{equation}\label{fdop3}
|\psi_{N}(\x)|\le C(\wt a, C_{0}),\ \ \x\in\Omega,\ \ N\in\N.
\end{equation}
\end{lemma}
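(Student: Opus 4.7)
The strategy is to iterate the fixed point identity \eqref{f13} once and split the domain of integration into the two pieces $\Omega_{N,1}$ and $\Omega_{N,2}$, exploiting the fact that on $\Omega_{N,2}$ the function $\psi_N$ is controlled by $\|\psi_N\|_{L_\infty}^{1/2}$, while on $\Omega_{N,1}$ the measure $|\Omega_{N,1}|$ is small in terms of $\|\psi_N\|_{L_\infty}$. The large-value part will be absorbed using the absolute continuity of the integral of $\wt a$.

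First I would estimate the measure of $\Omega_{N,1}$. Since $\psi_N(\y)\ge 0$, $\int_\Omega \psi_N(\y)\,d\y=1$, and $\psi_N(\y)>\|\psi_N\|_{L_\infty}^{1/2}$ on $\Omega_{N,1}$, one has
$$
1=\intop_\Omega \psi_N(\y)\,d\y\ge \intop_{\Omega_{N,1}} \psi_N(\y)\,d\y\ge \|\psi_N\|_{L_\infty}^{1/2}\,|\Omega_{N,1}|,
$$
which yields $|\Omega_{N,1}|\le \|\psi_N\|_{L_\infty}^{-1/2}$.

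Next I would establish an auxiliary uniform integrability property of the periodic kernel: for every $\eta>0$ there exists $\delta(\eta)>0$ such that for every measurable $E\subset\Omega$ with $|E|<\delta(\eta)$ and for every $\x\in\Omega$,
$$
\intop_E \wt a(\y-\x)\,d\y<\eta.
$$
This follows from $\wt a\in L_1(\Omega)$, the $\Z^d$-periodicity of $\wt a$, and the translation invariance of Lebesgue measure: a change of variables $\z=\y-\x$ together with periodicity reduces the integral to one over a shifted copy of $E$ inside $\Omega$, on which absolute continuity of the integral of the fixed $L_1$ function $\wt a$ applies.

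Then I would plug the decomposition $\Omega=\Omega_{N,1}\cup\Omega_{N,2}$ into \eqref{f13} and estimate each piece using $\mu(\y,\x)p_N^{-1}(\y)\le C_0$ and $\wt a_N\le \wt a$. On $\Omega_{N,2}$ the bound $\psi_N(\y)\le \|\psi_N\|_{L_\infty}^{1/2}$ gives a contribution at most $C_0\|\wt a\|_{L_1(\Omega)}\|\psi_N\|_{L_\infty}^{1/2}$. On $\Omega_{N,1}$ the trivial bound $\psi_N(\y)\le\|\psi_N\|_{L_\infty}$ combined with the auxiliary estimate above (applied to $E=\Omega_{N,1}$) produces a contribution at most $C_0\|\psi_N\|_{L_\infty}\,\eta$, provided $\|\psi_N\|_{L_\infty}^{-1/2}<\delta(\eta)$.

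Finally I would choose $\eta:=(2C_0)^{-1}$ and distinguish two cases. If $\|\psi_N\|_{L_\infty}\le \delta(\eta)^{-2}$ the conclusion is immediate. Otherwise, combining the two contributions above and taking the supremum over $\x\in\Omega$ yields
$$
\|\psi_N\|_{L_\infty}\le C_0\|\wt a\|_{L_1(\Omega)}\|\psi_N\|_{L_\infty}^{1/2}+\tfrac{1}{2}\|\psi_N\|_{L_\infty},
$$
and this quadratic-type inequality is solved to give $\|\psi_N\|_{L_\infty}\le (2C_0\|\wt a\|_{L_1(\Omega)})^2$, uniformly in $N$. I expect the main obstacle to be the uniform integrability step with respect to the shift parameter $\x$, since one must be careful that the smallness of $\intop_E\wt a(\y-\x)\,d\y$ depends only on $|E|$ and not on $\x$; the periodicity of $\wt a$ and the fact that the Lebesgue integral of an $L_1$ function is absolutely continuous are precisely what make this uniformity possible.
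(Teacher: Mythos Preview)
Your proposal is correct and follows essentially the same route as the paper: split the integral in \eqref{f13} over $\Omega_{N,1}$ and $\Omega_{N,2}$, bound $|\Omega_{N,1}|\le\|\psi_N\|_{L_\infty}^{-1/2}$ via the normalization $(\psi_N,\mathbf{1}_\Omega)=1$, use absolute continuity of $\int\wt a$ uniformly in the shift $\x$ to absorb the $\Omega_{N,1}$-term, and close with the quadratic inequality and the same two-case split. The only cosmetic difference is that the paper packages the uniform integrability step by introducing the function $F_{\wt a}(t)=\sup\{\int_{\mathcal O}\wt a\,:\,\mathcal O\subset[-2,2]^d,\ |\mathcal O|\le t\}$ rather than an $\eta$--$\delta$ formulation, but the content and the resulting constants are identical.
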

\begin{proof}
Representing the integral in  \eqref{f13} as the sum of the integrals over $\Omega_{N,1}$ and $\Omega_{N,2}$ yields
\begin{multline}\label{f14}
\psi_{N}(\x)\le C_{0}\|\psi_{N}\|\big._{L_{\infty}}\int_{\Omega_{N,1}}\wt a_{N}(\y-\x)\,d\y+
C_{0}\|\psi_{N}\|_{L_{\infty}}^{1/2}\int_{\Omega_{N,2}}\wt a_{N}(\y-\x)\,d\y\le\\\le
C_{0}\|\psi_{N}\|\big._{L_{\infty}}\int_{\Omega_{N,1}}\wt a(\y-\x)\,d\y+
C_{0}\|\psi_{N}\|_{L_{\infty}}^{1/2}\int_{\Omega}\wt a(\y-\x)\,d\y,\ \ \x\in\Omega.
\end{multline}
The measure of the set $\Omega_{N,1}$ admits the estimate
\begin{equation}\label{f15}
\operatorname{mes}\Omega_{N,1}\le\int_{\Omega_{N,1}}\|\psi_{N}\|_{L_{\infty}}^{-1/2} \psi_{N}(\y)\,d\y\le\|\psi_{N}\|_{L_{\infty}}^{-1/2}\int_{\Omega}\psi_{N}(\y)\,d\y=\|\psi_{N}\|_{L_{\infty}}^{-1/2}.
\end{equation}
Denote
\begin{equation}
\label{5.22}
F_{\wt{a}}(t) := \sup \left\{  \int_{\mathcal O} \ \wt{a}(\z)\,d\z: \ {\mathcal O} \subset [-2,2]^d,\ \operatorname{mes} {\mathcal O} \le t \right\},\quad t>0.
\end{equation}
Obviously, the function $F_{\wt{a}}(t)$ is non-decreasing, and,  due to the properties of Lebesgue integral,
$F_{\wt{a}}(t) \to 0$, as $t\to +0$.
By  \eqref{f15} and \eqref{5.22} we have
$$
\intop_{\Omega_{N,1}}  \wt{a} (\y - \x)  \, d\y
\le F_{\wt{a}} \left(\|\psi_{N}\|_{L_{\infty}}^{-1/2}\right).
$$
Combined with  \eqref{f14} this estimate leads to the inequality
\begin{equation}
\label{5.23}
\| \psi_N \|_{L_\infty} \le C_{0}\|\psi_{N}\|_{L_{\infty}}F_{\wt{a}} \left(\|\psi_{N}\|_{L_{\infty}}^{-1/2}\right)+C_{0}\|\psi_{N}\|_{L_{\infty}}^{1/2}\|\wt a\|_{L_{1}(\Omega)}.
\end{equation}
Next, we select the number $t_0 = t_0(\wt{a},C_{0}) >0$ so that
$$
C_{0}\cdot F_{\wt a}(t_{0})\le\frac{1}{2}.
$$
If  $\|\psi_{N}\|_{L_{\infty}}^{-1/2} > t_0$, then
$$
\|\psi_{N}\|_{L_{\infty}}\le t_{0}^{-2}(\wt a, C_{0}).
$$
If $ \|\psi_{N}\|_{L_{\infty}}^{-1/2} \le t_0$, then
by \eqref{5.23} we have the inequality
$$
\|\psi_{N}\|_{L_{\infty}}\le\frac{1}{2}\|\psi_{N}\|_{L_{\infty}}+C_{0}\|\psi_{N}\|_{L_{\infty}}^{1/2}\|\wt a\|_{L_{1}(\Omega)},
$$
which implies that
$$
 \|\psi_{N}\|_{L_{\infty}}\le 4 C_{0}^{2}\|\wt a\|^{2}_{L_{1}(\Omega)}.
$$
Finally, we arrive at the estimate
\begin{equation*}
\|\psi_{N}\|_{L_{\infty}(\Omega)}\le\max\{t^{-2}_{0}(\wt a,C_{0}),4 C_{0}^{2}\|\wt a\|^{2}_{L_{1}(\Omega)}\}=:C(\wt a, C_{0}).
\end{equation*}
\end{proof}

\subsection{Boundedness of the function  $\psi_{0}$}
According to Lemma \ref{lemma6.4.1},
$$
\| \psi_N - \psi_{0} \|_{L_2(\Omega)} \to 0 \quad \text{as }\ N \to \infty.
$$
Then, by the Riesz theorem, for a subsequence  $N_k \to \infty$, it holds
$$
\psi_{N_k}(\x) \to \psi_{0}(\x), \quad \hbox{as }k \to \infty, \ \text{for almost all }\  \x \in \Omega.
$$
From this inequality, taking into account  \eqref{fdop3}, we obtain the estimate
$|\psi_{0}(\x)|\le C(\wt a,C_{0})$ for almost all $\x\in\Omega$.
This completes the proof of Theorem  \ref{theorem1.5}.

\end{document}